\title{Stable foliations and \\
         semi-flow Morse homology
       }
\author{Joa Weber\footnote{
        Financial support:
        FAPESP grant 2013/20912-4, 
        FAEPEX grant 1135/2013
                 }
                 \\
        IMECC UNICAMP\\
       }
\newtheorem{theoremABC}{Theorem}
\newtheorem{theorem}{Theorem}[section]
\newtheorem{corollary}[theorem]{Corollary}
\newtheorem{lemma}[theorem]{Lemma}
\newtheorem{proposition}[theorem]{Proposition}
\theoremstyle{definition}
\newtheorem{definition}[theorem]{Definition}
\newtheorem{hypothesis}[theorem]{Hypothesis}
\newtheorem{remark}[theorem]{Remark}
\theoremstyle{remark}
\renewcommand{\1}{{{\mathchoice {\rm 1\mskip-4mu l} {\rm 1\mskip-4mu l}
{\rm 1\mskip-4.5mu l} {\rm 1\mskip-5mu l}}}}
\renewcommand{\graph}{{\rm graph }}  
\newcommand{\D}{{\mathbb{D}}}
\newcommand{\N}{{\mathbb{N}}}
\newcommand{\R}{{\mathbb{R}}}
\renewcommand{\SS}{{\mathbb{S}}}
\newcommand{\Z}{{\mathbb{Z}}}
\newcommand{\Bb}{{\mathcal{B}}}
\newcommand{\Dd}{{\mathcal{D}}}
\newcommand{\Ee}{{\mathcal{E}}}
\newcommand{\Ff}{{\mathcal{F}}}
\newcommand{\Gg}{{\mathcal{G}}}   
\newcommand{\Ll}{{\mathcal{L}}}   
\newcommand{\Mm}{{\mathcal{M}}}   
\newcommand{\Nn}{{\mathcal{N}}}
\newcommand{\Oo}{{\mathcal{O}}}
\newcommand{\Ss}{{\mathcal{S}}}
\newcommand{\Tt}{{\mathcal{T}}}
\newcommand{\Uu}{{\mathcal{U}}}
\newcommand{\Vv}{{\mathcal{V}}}
\newcommand{\Ww}{{\mathcal{W}}}
\newcommand{\im}{{\rm im\, }}      
\newcommand{\range}{{\rm range\, }}  
\newcommand{\diag}{{\rm diag}}     
\newcommand{\cl}{{\rm cl\, }}         
\newcommand{\dist}{{\rm dist}}     
\newcommand{\INT}{{\rm int\, }}       
\newcommand{\IND}{{\rm ind}}       
\newcommand{\grad}{{\rm grad }}    
\newcommand{\Crit}{{\rm Crit}}        
\newcommand{\Ho}{{\rm H}}             
\newcommand{\Co}{{\rm C}}              
\newcommand{\HM}{{\rm HM}}          
\newcommand{\CM}{{\rm CM}}          
\newcommand{\W}{{\rm W}}
\newcommand{\norm}{{\rm norm}}
\newcommand{\eps}{{\varepsilon}}
\def\NABLA#1{{\mathop{\nabla\kern-.5ex\lower1ex\hbox{$#1$}}}}
\def\Nabla#1{\nabla\kern-.5ex{}_{#1}}
\def\Tabla#1{\Tilde\nabla\kern-.5ex{}_{#1}}
\def\abs#1{\mathopen|#1\mathclose|}   
\def\Abs#1{\left|#1\right|}            
\def\norm#1{\mathopen\|#1\mathclose\|}
\def\Norm#1{\left\|#1\right\|}
\renewcommand{\Tilde}{\widetilde}
\newcommand{\p}{{\partial}}
\begin{document}
\maketitle
%
%
%
\begin{abstract}
In case of the heat flow on the free loop
space of a closed Riemannian manifold
non-triviality of Morse homology for
semi-flows is established by
constructing a natural isomorphism
to singular homology
of the loop space. 
The construction is also
new in finite dimensions.
The main idea is to build a
Morse filtration using Conley pairs
and their pre-images
under the time-$T$-map of the heat flow. 
A crucial step is to contract each Conley pair
onto its part in the unstable manifold.
To achieve this we construct stable
foliations for Conley pairs using the recently
found backward
$\lambda$-Lemma~\cite{weber:2014a}.
These foliations are of independent
interest~\cite{weber:2014d}.
\end{abstract}
\tableofcontents

\section{Main results}
\label{sec:intro}
Consider a closed Riemannian manifold
$(M,g)$. A smooth
function $V\in C^\infty(S^1\times M)$,
called potential,
gives rise to the {\bf classical action
functional}
$$
     \Ss_V(\gamma)
     =\int_0^1\left(
     \frac12\Abs{\dot\gamma(t)}^2
     -V(t,\gamma(t))\right) dt
$$
defined on the
{\bf\boldmath free loop space of $M$},
that is the Hilbert manifold
$\Lambda M=W^{1,2}(S^1,M)$ which consists of all
absolutely continuous maps
$\gamma:S^1\to M$ whose first
derivative is square integrable.
Here and throughout we identify
$S^1=\R/\Z$ and
think of maps defined on $S^1$ as
$1$-periodic maps defined on $\R$.
Let $\nabla$ be the Levi-Civita
connection.
The set $\Crit$ of critical
points of $\Ss_V$ consists of the
$1$-periodic solutions of the ODE
\begin{equation}\label{eq:1-periodic}
     \Nabla{t}\dot x+\nabla V_t(x)=0
\end{equation}
where $V_t(q):=V(t,q)$.
For constant $V$ these are
the closed geodesics.
The negative $L^2$ gradient of $\Ss_\Vv$ is
given by the left hand side of~(\ref{eq:1-periodic})
and defined on a dense subset
$W^{2,2}$ of $\Lambda M$.
It generates a $C^1$ semi-flow
\begin{equation*}
       \varphi :(0,\infty)\times\Lambda M\to\Lambda M
\end{equation*}
which extends continuously to time zero,
preserves sublevel sets,
and is called the {\bf heat flow}; see
e.g.~\cite{henry:1981a,weber:2010a-link,weber:2013b}.
The semi-flow still exists for a class of abstract
perturbations, introduced in~\cite{salamon:2006a},
that take the form of smooth maps
$\Vv:\Lambda M\to\R$ which satisfy certain
axioms, say~{\rm (V0)--(V3)} in the notation
of~\cite{weber:2013b}. 
  These perturbations allow to achieve
  Morse-Smale transversality generically;
  see~\cite{weber:2013b}.
  They extend from the dense subset
  $\Ll M=C^\infty(S^1,M)$
  to $\Lambda M$ by~(V0).
Define $\varphi_s\gamma=u(s,\cdot)$
where $u:[0,\infty)\times S^1\to M$
solves the {\bf heat equation}
\begin{equation}\label{eq:heat}
     \p_su-\Nabla{t}\p_tu-\grad\Vv(u)=0
\end{equation}
with $u(0,\cdot)=\gamma$.
If $\Vv(\gamma)=\int_0^1 V_t(\gamma(t)) dt$,
then $\grad\Vv(u)=\nabla V_t(u)$;
see~\cite{weber:2013b}.

\subsection{Semi-flow Morse homology}
\label{sec:main-results}
From now on fix $V$ in the residual
(hence dense)
subset of $C^\infty(S^1\times M,\R)$ for
which $\Ss_V$ is a {\bf Morse function},
that is all critical points are
nondegenerate; see~\cite{weber:2002a}.
An {\bf\boldmath oriented critical
point} $\langle x\rangle$ or $o_x$
is a critical point $x$ together with
an orientation of the
maximal vector subspace
$E_x\subset T_x\Lambda M$
on which the Hessian of $\Ss_\Vv$
is negative definite. Recall that the
dimension of $E_x$, denoted by
$\IND_V(x)$, is finite and called the 
{\bf\boldmath Morse index of $x$};
see e.g.~\cite{weber:2002a}.

\subsubsection*{Chain groups}
Fix a regular value $a$ of
$\Ss_V$.
The set $\Crit^a$ of critical
points of the Morse function
$\Ss_V$ defined on the sublevel set
$$
     \Lambda^a M=\{\Ss_V\le a\}
$$
is a finite set, see e.g.~\cite{weber:2002a},
hence the set $\Crit$ is countable.
To avoid dependence of the Morse
chain complex on the (traditionally
taken and lamented)
apriori choices of orientations
a look at the construction of
simplicial homology is useful;
see e.g.~\cite[\S5]{munkres:1984a}.
In this theory all simplices are taken
oriented, because the \emph{algebraic}
boundary operator induces on
(or transports to) the faces
precisely the \emph{geometric} boundary
orientation which eventually
leads to $\p^2=0$. Then in a second step
one factors out opposite orientations.
In the context of Floer homology
a similar approach was
taken recently by Abbondandolo
and Schwarz~\cite{abbondandolo:2013b}
who use oriented critical points
as generators and then factor out
opposite orientations.
This requires a mechanism of
orientation transport, but
avoids having unnatural
orientations built into the chain
complex and therefore allows for a
natural isomorphism to singular
homology.

By definition the
{\bf\boldmath Morse chain group
$\CM_*^a=\CM_*^a(V)$}
is the free abelian group generated by
the (finite) set of oriented critical points
$\langle x\rangle$, likewise denoted by $o_x$, below level
$a$ and subject to the relations
\begin{equation}\label{eq:relation}
     o_x+\bar o_x=0
     ,\qquad
     \forall x\in\Crit^a,
\end{equation}
where $\bar o_x$ is the orientation opposite to $o_x$.
The Morse index provides a natural
grading and $\Crit^a_k\subset\Crit^a$
denotes the set
of critical points of Morse index $k$.

\subsubsection*{Boundary operator}
Fix an element
$v=v_a:\Lambda M\to\R$
of the set $\Oo^a_{reg}$ of regular
perturbations defined 
in~\cite[\S 5]{weber:2013b},
set
\begin{equation}\label{eq:perturbation-MS}
     \Vv(\gamma;V,v_a)
     =v_a(\gamma)
     +\int_0^1 V(t,\gamma(t))\, dt ,
\end{equation}
and note the following consequences.
Firstly, on $\Lambda^a M$ the critical
points of $\Ss_V$ and the
{\bf perturbed action} $\Ss_\Vv$, also called
{\bf Morse-Smale function}, given by
\begin{equation}\label{eq:action-fct}
     \Ss_\Vv(\gamma)
     =\frac12\int_0^1
     \Abs{\dot\gamma(t)}^2\, dt
     -\Vv(\gamma)
\end{equation}
coincide by~\cite[\S 5  Prop.~8]{weber:2013b}.
In abuse of notation we denote
the perturbed action
$\Ss_\Vv$ sometimes by $\Ss_{V+v_a}$.
In fact, both functionals
coincide on a neighborhood $U=U(V)$
in $\Lambda M$ of the set $\Crit$ of
\emph{all} critical points.
Therefore the subspaces $E_x$ do
not change under such perturbations.
Secondly, the perturbed action $\Ss_\Vv$
is {\bf\boldmath Morse Smale below level
$a$} in the functional analytic sense
of~\cite[\S 1]{weber:2013b}.

By~\cite[\S 6 Thm.~18]{weber:2013b}
the unstable manifold $W^u(x)=W^u(x;\Vv)$
of any critical point $x$ is a contractible,
thus orientable, smooth submanifold of
$\Lambda M$ whose dimension is given
by the Morse index $k=\IND_V(x)$.
On the other hand, for $\eps=\eps(a)>0$
small\footnote{
  As a consequence of the local stable
  manifold theorem,
  see e.g.~\cite[\S 2.5 Thm.~3]{weber:2014a},
  and the Palais-Morse Lemma there is
  a constant $\eps_a>0$ such that
  the assertion holds
  $\forall\eps\in(0,\eps_a]$.  
  }
the {\bf stable} or {\bf ascending disk}
\begin{equation}\label{eq:asc-disk}
     W^s_\eps(y)=W^s_\eps(y;\Vv)
     :=W^s(y;\Vv)\cap
     \{\Ss_\Vv<\Ss_\Vv(y)+\eps\}
\end{equation}
of any $y\in\Crit^a$
is a $C^1$ Hilbert submanifold of
$\Lambda M$ of finite codimension
$\ell=\IND_V(y)$.
Since $T_y W^u(y)$ is the orthogonal
complement of the tangent space
at $y$ to the ascending
disk $W^s_\eps(y)$,
an orientation of the unstable manifold
determines a co-orientation
of the (contractible) ascending disk and vice versa.

The functional
analytic characterization of the
{\bf\boldmath Morse-Smale condition
below level $a$} used in the
definition of $\Oo^a_{reg}$ translates
into the form common in dynamical systems,
namely that all intersections
\begin{equation}\label{eq:MS-DS}
     M_{xy}^\eps
     :=W^u(x)\pitchfork\W^s_\eps(y)
     ,\qquad
     \forall x,y\in\Crit^a,
\end{equation}
are cut out transversely from $\Lambda M$.
Consequently these intersections are
$C^1$ manifolds of dimension equal to
the Morse index difference $k-\ell$.
They are naturally oriented given
an orientation of $W^u(x)$ and a
co-orientation of $W^s_\eps(y)$.
More precisely,
condition~(\ref{eq:MS-DS})
implies that there is the pointwise
splitting
\begin{equation}\label{eq:or-ds}
     T_\gamma W^u(x)
     \cong T_\gamma M_{xy}^\eps\oplus
     \left(T_\gamma W^s_\eps(y)
     \right)^\perp
     ,\qquad
     \gamma\in M_{xy}^\eps,
\end{equation}
into two orthogonal subspaces.
Furthermore, for generic
$\delta\in(0,\eps)$ each set
\begin{equation}\label{eq:m_xy}
     m_{xy}:=M_{xy}^\eps\cap
     \left\{\Ss_\Vv=\Ss_\Vv(y)
     +\delta\right\}
     ,\quad
     \forall x,y\in\Crit^a,
\end{equation}
is cut out transversely from $M_{xy}^\eps$
and therefore inherits the structure
of a $C^1$ manifold of dimension
$k-\ell-1$.
By the gradient nature of the
heat flow each trajectory between $x$ and
$y$ intersects a level set
precisely once. Thus the elements of
$m_{xy}$ correspond precisely
to the heat flow lines from $x$ to $y$
(modulo time shift).
Therefore one calls $m_{xy}$
{\bf\boldmath manifold of connecting
trajectories between $x$ and $y$}.

Now consider the case of index
difference $1$. Fix an oriented
critical point $\langle x\rangle$ of Morse
index $k$. Then $m_{xy}$ is a finite
set for any $y\in\Crit_{k-1}$
by~\cite[Prop.~1]{weber:2013b}.\footnote{
  Identify $m_{xy}$ and the space
  $\Mm(x,y)/\R$ in~\cite{weber:2013b}
  via the bijection
  $\gamma\mapsto u(s,t)
  :=\left(\varphi_s\gamma\right)(t)$.
  Actually,
  if there are no critical points whose
  action lies between that of $x$ and
  $y$, then the finite set property is elementary:
  Because $m_{xy}$ is the transversal
  intersection -- inside the level
  hypersurface
  $\{\Ss_\Vv=\Ss_\Vv(y)+\eps/2\}$ --
  of a descending $k$-sphere $S^u(x)$
  and an ascending sphere of $y$
  of codimension $k$,
  finiteness of $m_{xy}$ follows from
  compactness of $S^u(x)$.
  }
The orientation $\langle x\rangle$
of $E_x=T_x W^u(x)$ extends to
an orientation of $W^u(x)$. Because
the dimension of $M_{xy}^\eps$ is one,
each of its components is a heat flow line
which runs to $y$ and, most importantly,
is \emph{naturally oriented}
by the forward/downward
flow. Because two of the vector spaces
in~(\ref{eq:or-ds}) are oriented,
declaring the direct sum an oriented
direct sum determines an orientation
of the third space.
More precisely, the identity
\begin{equation}\label{eq:or-ds-2}
  \left\langle T_\gamma W^u(x)\right\rangle
     _{\langle x\rangle}
  \cong
  \left\langle{\textstyle\frac{d}{ds}\varphi_s
     \gamma}\right\rangle_{\langle\mathrm{flow}\rangle}
  \oplus
  \left\langle T_\gamma W^s_\eps(y)^\perp\right\rangle
     _{u_*\langle x\rangle}
  ,\qquad
  \gamma\in m_{xy},
\end{equation}
determines a co-orientation of
$W^s_\eps(y)$, thus an orientation of
$W^u(y)$, depending on $\langle x\rangle$.
This orientation, denoted by
$u_*\langle x\rangle$ or by
$\langle y\rangle_{u_*\langle x\rangle}$ to
emphasize the target critical point
$y=y(u_\gamma)=u_\gamma(\infty)$,
is called the
{\bf transport} or
{\bf\boldmath push-forward of
$\langle x\rangle$ along the
trajectory $u=u_\gamma$} where
$u_\gamma(s)=\varphi_s\gamma$.
Already in the early days of finite dimensional
Morse homology a corresponding
procedure appeared in~\cite{salamon:1990a},
although it was used to compare, not
to transport, orientations.

The {\bf Morse boundary operator}
is defined on oriented critical points by
\begin{equation*}
\begin{split}
     \p^M_k=\p^M_k(V,v_a)
     :\CM_k^a(\Ss_V)
   &\to\CM_{k-1}^a(\Ss_V)
   \\
     \langle x\rangle
   &\mapsto
     \sum_{y\in\Crit_{k-1}}\sum_{u\in m_{xy}}
     u_*\langle x\rangle.
\end{split}
\end{equation*}
By~(\ref{eq:or-ds-2})
this definition respects the
relations~(\ref{eq:relation}).
Extend $\p^M_k$ by linearity.
\begin{theorem}\label{thm:d^2=0}
It holds that $\p^M_{k-1}\circ\p^M_k=0$
for every integer $k$.
\end{theorem}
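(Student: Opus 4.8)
The plan is to prove the standard broken-trajectory identity, adapted to the present semi-flow and to the natural-orientation bookkeeping. Fix an integer $k$, an oriented critical point $\langle x\rangle\in\Crit^a_k$, and a critical point $z\in\Crit^a_{k-2}$. By the definition of $\p^M$ and linearity, the coefficient of $\langle z\rangle$ in $\p^M_{k-1}\p^M_k\langle x\rangle$ is the formal sum of orientations of $E_z$
\[
   \sum_{y\in\Crit^a_{k-1}}\ \sum_{u\in m_{xy}}\ \sum_{v\in m_{yz}} v_*u_*\langle x\rangle ,
\]
which is a finite sum: only the finitely many $y$ with $\Ss_V(z)<\Ss_V(y)<\Ss_V(x)$ contribute, and each $m_{xy}$, $m_{yz}$ is finite by \cite[Prop.~1]{weber:2013b}. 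It therefore suffices to show that this sum vanishes in $\CM^a_{k-2}$, i.e.\ that its terms cancel in pairs of mutually opposite orientations via the relation~(\ref{eq:relation}).

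The next step is to set up the relevant one-dimensional moduli space and compactify it, generalising the index-gap-one analysis of \cite{weber:2013b} to index gap two. Since $\IND_V(x)-\IND_V(z)=2$, the set $m_{xz}$ --- equivalently, under the identification of the footnote above, the space $\Mm(x,z)/\R$ --- is a $C^1$ one-manifold. I would show that the space $\overline{m_{xz}}$ of trajectories from $x$ to $z$ broken at most once, modulo time shift and with the Gromov-type topology, is a compact $C^1$ one-manifold with boundary, and that
\[
   \p\,\overline{m_{xz}}=\bigsqcup_{y\in\Crit^a_{k-1}} m_{xy}\times m_{yz} .
\]
The ingredients are standard in this setting: \emph{(i)}~compactness --- the energy identity confines $\Ss_\Vv$ along such trajectories to the interval $[\Ss_\Vv(z),\Ss_\Vv(x)]$, there is no bubbling for the heat semi-flow, and $\Ss_V$ has only isolated critical values, so any sequence in $m_{xz}$ subconverges to a once-or-less-broken trajectory, exactly as in \cite{weber:2013b}; \emph{(ii)}~the Morse--Smale condition~(\ref{eq:MS-DS}) forces every break to be simple and to occur at a critical point of index exactly $k-1$, since a break through an intermediate critical point of any other index, or a double break, would lie in a moduli stratum of negative, respectively strictly smaller, virtual dimension and hence be empty; \emph{(iii)}~a gluing theorem producing, near each broken trajectory $(u,v)$, a half-open $C^1$ collar $[R,\infty)\hookrightarrow m_{xz}$, so that $\overline{m_{xz}}$ is a manifold with boundary there and each broken trajectory is the limit of a unique end of $m_{xz}$.

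Consequently every connected component of $\overline{m_{xz}}$ is either a circle, with empty boundary, or an arc with exactly two boundary points. The heart of the argument is the orientation analysis for an arc component $C$ whose two ends are the broken trajectories $(u_0,v_0)$ and $(u_1,v_1)$: I must show that $(v_0)_*(u_0)_*\langle x\rangle$ and $(v_1)_*(u_1)_*\langle x\rangle$ are \emph{opposite} orientations of $E_z$. To this end I propagate $\langle x\rangle$ along $C$. Choose an orientation of $C$; for an interior point $\gamma\in C$ --- an honest trajectory from $x$ to $z$ --- combine the splitting~(\ref{eq:or-ds}) for the pair $x,z$ with the refinement $T_\gamma M_{xz}^\eps\cong\left\langle\tfrac{d}{ds}\varphi_s\gamma\right\rangle\oplus T_\gamma C$ (a generic level set cuts $M_{xz}^\eps$ transversely, with normal the flow direction, by the gradient nature of the flow) to obtain
\[
   \left\langle T_\gamma W^u(x)\right\rangle_{\langle x\rangle}
   \cong\left\langle\tfrac{d}{ds}\varphi_s\gamma\right\rangle_{\langle\mathrm{flow}\rangle}
   \oplus\langle T_\gamma C\rangle
   \oplus\left\langle T_\gamma W^s_\eps(z)^\perp\right\rangle .
\]
Orienting $W^u(x)$ by $\langle x\rangle$ (it is connected), the flow direction canonically, and $C$ by the chosen orientation forces an orientation of $T_\gamma W^s_\eps(z)^\perp$, hence a co-orientation of $W^s_\eps(z)$, hence an orientation $\mathfrak o(C)$ of $E_z$; being locally constant in $\gamma$ it is constant on $C$. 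One then proves the key lemma: as $\gamma$ tends to the end $(u_i,v_i)$, the orientation $\mathfrak o(C)$ converges to $(v_i)_*(u_i)_*\langle x\rangle$, up to the sign by which the chosen orientation of $C$ agrees with or opposes the outward conormal of $C$ at that end. Since an arc is outward at one endpoint and inward at the other, this yields the desired opposition. Summing over all components of $\overline{m_{xz}}$, every term of the displayed triple sum occurs exactly once as the contribution of one end of one arc, cancelled by the contribution of the other end, while circle components contribute nothing. Hence the coefficient of $\langle z\rangle$ in $\p^M_{k-1}\p^M_k\langle x\rangle$ vanishes in $\CM^a_{k-2}$; as $z$, $\langle x\rangle$, and $k$ were arbitrary and $\p^M$ is linear, $\p^M_{k-1}\circ\p^M_k=0$.

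I expect the key lemma of the previous paragraph to be the main obstacle. Its proof is a linear-algebra computation carried along the gluing. As $\gamma=\gamma_\rho$ approaches the break $(u,v)$, the unstable manifold $W^u(x)$, followed near $y=u(\infty)$, ``slides'' onto $W^u(y)$ after the passage past $y$ --- precisely the geometry controlled by the backward $\lambda$-lemma~\cite{weber:2014a} and by the stable foliations constructed in this paper --- so that $T_{\gamma_\rho}W^u(x)$ degenerates with its ``flow $\oplus$ moduli-near-$u$'' block stabilising to the data of the $x\to y$ splitting~(\ref{eq:or-ds-2}) followed by the flow along $v$, while $T_{\gamma_\rho}W^s_\eps(z)^\perp$ stabilises to $T W^s_\eps(z)^\perp$ along $v$. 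Feeding this into the associativity of oriented direct sums, and using that the collar parameter $\rho$ is an admissible orientation-compatible coordinate on $C$ near the end, produces the asserted sign. This is the co-orientation reformulation of the classical sign count for $\p^2=0$ --- compare the comparison-of-orientations procedure of \cite{salamon:1990a} --- and I would isolate it as a lemma about the determinant lines, i.e.\ the orientations of kernels and cokernels, of the linearised heat operators along $u$, along $v$, and along the glued solution, where it reduces to the standard linear gluing of such data.
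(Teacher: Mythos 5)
Your proposal takes a genuinely different route from the paper. The paper does not prove $\p^M\circ\p^M=0$ by compactifying moduli spaces and counting boundary points. Instead, it constructs a Morse filtration $\Ff^a=(F_k)$ of $\Lambda^a M$ via Conley pairs and pre-images under the time-$T$-map, identifies the Morse chain group $\CM^a_k$ with the cellular chain group $\Ho_k(F_k,F_{k-1})$ via the isomorphisms $\Theta_k$ of Theorem~\ref{thm:cellular-filtration}, and shows in Theorem~\ref{thm:Morse=triple} that $\Theta$ intertwines $\p^M$ with the triple boundary operator $\p^{trip}$ of the filtration. Since $\p^{trip}$ is a connecting homomorphism in the exact sequence of a triple and hence automatically squares to zero, $\p^M\circ\p^M=0$ drops out for free. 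What this buys is substantial: no compactification and, above all, no gluing theorem is ever needed. In exchange, one must construct the filtration, which is the main content of the paper (the Backward $\lambda$-Lemma, invariant stable foliations, deformation retraction of Conley pairs onto the unstable part, uniform entrance-time estimates).

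Your route is the classical one — compactness, gluing, and orientation transport along one-dimensional moduli spaces. The compactness ingredient is fine: it is available from the cited reference (``Compactness up to broken trajectories'', used e.g.\ in the paper's proof of Lemma~\ref{le:34}), and the Morse--Smale index argument ruling out higher or multiple breaks is correct. The orientation bookkeeping via the push-forward orientations $u_*\langle x\rangle$ and the oriented splitting~(\ref{eq:or-ds-2}) is also the right framework and consistent with the paper's conventions. The one place you should be more cautious is the gluing theorem, which you list as a ``standard ingredient'' but which is neither proved in this paper nor quoted from the references. In the heat semi-flow setting — infinite-dimensional target, parabolic rather than elliptic linearization, and no backward flow on $\Lambda M$ — establishing the half-open collar structure on the compactified moduli space is a nontrivial piece of hard analysis. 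It is, in fact, precisely the step the paper's Morse-filtration strategy is designed to avoid. So as written, your proof has a genuine dependency on an unproven gluing result; if you carry it through, you will have proved $\p^2=0$ by a route that is more analytically demanding but more local, and that does not require building the global Morse filtration at all.

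Also note a small structural point: in the paper, Theorem~\ref{thm:d^2=0} is logically downstream of the entire filtration machinery, whereas your approach would establish $\p^2=0$ independently of it — which could be an advantage if one only wanted the chain complex, but does not by itself yield the natural isomorphism to singular homology that is the paper's main theorem.
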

\begin{proof}
Theorem~\ref{thm:Morse=triple}.
\end{proof}

\subsubsection*{Morse homology}
Assume $\Ss_V$ is Morse and $a\in\R$
is a regular value. For $v_a\in\Oo^a_{reg}$
define heat flow Morse homology of
the perturbed action by
\begin{equation}\label{eq:Morse-homology-1}
     \HM_k^a(\Lambda M,\Ss_{V+v_a})
     :=\frac{\ker \p^M_k}
     {\im \p^M_{k+1}}
\end{equation}
for every integer $k$. In~(\ref{eq:jhihi}) we
will establish isomorphisms
\begin{equation}\label{eq:gy6g5}
     \HM_*^a(\Lambda M,\Ss_{V+v})
     \cong 
     \mathrm{H}_*(\{\Ss_{V+v}\le a\})
     \cong
     \mathrm{H}_*(\{\Ss_{V}\le a\})
\end{equation}
for every $v\in\Oo^a_{reg}$ and where
the second isomorphism is natural
in $v\in\Oo^a$. Moreover,
given regular values $a<b$ and
a perturbation
$v\in\Oo^a_{reg}\cap\Oo^b_{reg}$,
the isomorphisms~(\ref{eq:gy6g5}) commute
with the inclusion induced
homomorphisms; see~(\ref{eq:gy656g}).
Throughout singular homology $\Ho_*$ is
taken with integer coefficients, unless
mentioned otherwise.

\begin{definition}
\label{def:Morse-homology}
{\bf\boldmath Heat semi-flow
homology below level $a$} of the Morse
function $\Ss_V:\Lambda M\to\R$
is defined by
$$
     \HM_*^a(\Lambda M,\Ss_{V})
     :=\HM_*^a(\Lambda M,\Ss_{V+v})
$$
where $v\in\Oo^a_{reg}$. By~(\ref{eq:gy6g5})
this definition does not depend on the
perturbation $v$
(which even leaves all critical
points including neighborhoods untouched;
cf.~(\ref{eq:action-fct})).
\end{definition}

The following result was announced
in~\cite[Thm.~A.7]{salamon:2006a}.

\begin{theoremABC}\label{thm:main}
Assume $\Ss_V$ is Morse and $a$ is either
a regular value of $\Ss_V$ or equal to infinity.
Then there is a natural isomorphism
$$
     \HM_*^a(\Lambda M,\Ss_V;R)
     \cong
     {\rm H}_*(\Lambda^a M;R)
$$
for every principal ideal domain $R$.
If $M$ is not simply connected,
then there is a separate isomorphism for
each component of the loop space.
The isomorphism commutes with the
homomorphisms
$
     \HM_*^a(\Lambda M,\Ss_V)
     \to
     \HM_*^b(\Lambda M,\Ss_V)
$
and
$
     {\rm H}_*(\Lambda^a M)
     \to
     {\rm H}_*(\Lambda^b M)
$
for $a<b$.
\end{theoremABC}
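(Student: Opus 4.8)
The plan is to treat first the case that $a$ is a regular value, and then to deduce the case $a=\infty$ by a direct limit. For a regular value, Definition~\ref{def:Morse-homology} reduces the statement to fixing one perturbation $v\in\Oo^a_{reg}$ and constructing an isomorphism $\HM_*^a(\Lambda M,\Ss_{V+v})\cong\Ho_*(\Lambda^a M)$ that is natural both in $v\in\Oo^a$ and in $a$; this is the content packaged as~(\ref{eq:gy6g5}). I would split it into two isomorphisms. The second, $\Ho_*(\{\Ss_{V+v}\le a\})\cong\Ho_*(\{\Ss_V\le a\})=\Ho_*(\Lambda^a M)$, is the soft one: by~\S\ref{sec:main-results} the functionals $\Ss_V$ and $\Ss_{V+v}$ coincide on a neighbourhood of $\Crit$ containing all critical points, so along the affine homotopy between them no critical value crosses the regular value $a$, and a standard Morse-theoretic deformation of the sublevel sets (using the negative gradient semi-flows, which preserve sublevel sets) produces a homotopy equivalence natural in $v$ and compatible with the inclusions for $a<b$. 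Since the heat semi-flow preserves path components and these correspond to the conjugacy classes of $\pi_1(M)$, every step below splits over $\pi_0(\Lambda M)$, which yields the asserted separate isomorphism on each component.

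The substantial isomorphism $\HM_*^a(\Lambda M,\Ss_{V+v})\cong\Ho_*(\{\Ss_{V+v}\le a\})$ I would obtain by constructing a \emph{Morse filtration} of the sublevel set. Ordering the finitely many critical values below $a$, one chooses for each of them a Conley index pair around the (isolated invariant) set of critical points sitting at that level and assembles these into a nested sequence $\emptyset=X_{-1}\subset X_0\subset\dots\subset X_K\simeq\{\Ss_{V+v}\le a\}$, graded so that $X_k/X_{k-1}$ is a wedge of $k$-spheres, one for each $x\in\Crit^a_k$. The obstruction specific to a semi-flow is that one cannot flow backward to produce the deformation retractions which, in finite-dimensional Morse theory, turn index pairs into such a CW-type filtration. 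The device is to work with the \emph{pre-images under a time-$T$-map $\varphi_T$ of the heat flow}: these are available (unlike backward trajectories) and make the required retractions legitimate without leaving the semi-flow category.

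The crucial step is to contract each Conley pair $(N,L)$ occurring above onto the part of the unstable manifolds it contains, i.e.\ onto $N\cap\bigcup_x W^u(x)$ relative to $L$. For a genuine flow this is the inclination ($\lambda$-)lemma; here I would instead use the stable foliation of $(N,L)$ -- whose leaves are pieces of the ascending disks $W^s_\eps(y)$ of~(\ref{eq:asc-disk}) -- constructed from the backward $\lambda$-lemma of~\cite{weber:2014a}, and retract along its leaves. After this, $X_k$ is obtained from $X_{k-1}$, up to homotopy, by attaching one $k$-cell for each $x\in\Crit^a_k$, so $\{\Ss_{V+v}\le a\}$ acquires the homotopy type of a CW complex with exactly one $k$-cell per index-$k$ critical point below $a$. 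Its cellular chain complex has as its group of $k$-chains the free abelian group on $\Crit^a_k$, which after the orientation bookkeeping of~\S\ref{sec:main-results} (oriented critical points modulo the relations~(\ref{eq:relation})) is precisely $\CM_k^a$; and the cellular boundary, computed by the degrees of the attaching maps, reduces in index difference one to the signed count of connecting trajectories in $m_{xy}$ with signs given by the orientation transport $u_*\langle x\rangle$ of~(\ref{eq:or-ds-2}) -- that is, to $\p^M$. This identification is the content behind Theorem~\ref{thm:Morse=triple}. Hence $\HM_*^a(\Lambda M,\Ss_{V+v})\cong\Ho_*(\{\Ss_{V+v}\le a\})$, naturally in $a$ because a Conley pair below level $a$ extends to one below level $b$ and the filtrations can therefore be chosen compatibly.

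For $a=\infty$, the space $\Lambda M$ is the increasing union of the $\Lambda^a M$ over regular values $a\to\infty$; singular homology commutes with this colimit, $\HM_*^\infty$ is by definition the corresponding colimit of the $\HM_*^a$, and the isomorphisms constructed above are compatible with the inclusion-induced maps, so they assemble to $\HM_*^\infty(\Lambda M,\Ss_V)\cong\Ho_*(\Lambda M)$. Since everything has been carried out at the level of the $\Z$-chain complex $(\CM_*^a,\p^M)$, tensoring with an arbitrary principal ideal domain $R$ -- equivalently, invoking the universal coefficient theorem on both sides -- upgrades the statement to $R$-coefficients. I expect the main obstacle to be the contraction in the third paragraph: without a backward flow one cannot simply push a Conley pair onto the unstable manifold, so constructing the stable foliations via the backward $\lambda$-lemma, establishing their continuity, and arranging their compatibility across the Conley pairs at successive levels is where the real work lies; matching the orientation signs in the cellular differential with $\p^M$ is the remaining delicate point.
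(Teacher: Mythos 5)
Your proposal follows the paper's own proof quite closely: reduce to a regular value $a$ and pass to $a=\infty$ by a direct limit; split the isomorphism through the perturbed action as in~(\ref{eq:jhihi}); build a Morse filtration from the Conley pairs $(N_x,L_x)$ together with pre-images under time-$T$ maps; deform each Conley pair onto its part in the unstable manifold via the stable foliation from the backward $\lambda$-lemma; and identify the cellular boundary with $\p^M$ through orientation transport (Theorems~\ref{thm:cellular-filtration}, \ref{thm:Morse=triple}, \ref{thm:cellular=singular}, \ref{thm:morse}). Two places where the paper's details differ from your sketch and are worth noting: the paper stays inside Dold's cellular-filtration formalism (nested \emph{open} sets $F_k$ with $\Ho_\ell(F_k,F_{k-1})=0$ for $\ell\ne k$), which sidesteps the stronger claim that each $X_k/X_{k-1}$ is genuinely a wedge of spheres; the second, ``soft'' isomorphism $\Ho_*(\{\Ss_{V+v}\le a\})\cong\Ho_*(\{\Ss_V\le a\})$ is obtained not from an affine homotopy $\Ss_{V+tv}$ -- which would require ruling out critical values crossing $a$ for intermediate $t$ -- but from the sandwich diagram~(\ref{eq:comm-tri-h}) based on \cite[\S 5.2 Prop.~8]{weber:2013b}; and the orientation matching you correctly flag as the remaining delicate point in fact also invokes the \emph{forward} $\lambda$-lemma in the final step of the proof of Theorem~\ref{thm:Morse=triple}.
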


\subsection{Morse filtrations and natural isomorphism}
\label{sec:intro-filtration-iso}

Theorem~\ref{thm:main} relates a purely
topological object with one whose construction
relies heavily on analysis and geometry.
Thus it is a natural idea to look for a family
of intermediate objects -- all encoding the
same homology --
which is flexibel enough so one is able to
relate some member to the Morse side.
A good choice for the family are
cellular filtrations of a topological
space. Indeed by~\cite[V \S 1]{dold:1995a}
cellular homology relates naturally to
singular homology.
This idea was applied successfully
already by Milnor~\cite{milnor:1965a}
in finite dimensions and, more recently, for
flows on Banach manifolds by Abbondandolo
and Majer~\cite{abbondandolo:2006a}.

\begin{definition}\label{def:cellular-filtration}
A sequence of subspaces 
{\boldmath $
     \Ff(\Lambda)
     =\left( F_k\right)_{k\in\Z}
$}
of a topological space $\Lambda$
is called a 
{\bf cellular filtration of \boldmath$\Lambda$}
if
\begin{enumerate}
\item[(i)]
     $F_k\subset F_{k+1}$ for every $k\in\Z$;
\item[(ii)]
     every singular simplex in $\Lambda$
     is a simplex in $F_k$ for some $k$;
\item[(iii)]
     relative singular homology
     ${\rm H}_\ell(F_k,F_{k-1})$ vanishes
     whenever $\ell\not= k$.
\end{enumerate}
\end{definition}

The {\bf cellular complex}
$
     \Co\Ff (\Lambda)
     =(\Co_*\Ff(\Lambda),\p_*^{trip})
$
of a cellular filtration $\Ff(\Lambda)=(F_k)_{k\in\Z}$
of a topological space $\Lambda$
consists of the {\bf cellular chain groups}
$$
     \Co_k\Ff(\Lambda)
     :=\Ho_k(F_k,F_{k-1})
$$
and the {\bf cellular boundary operator}
$$
     \p_k^{trip}:\Co_k\Ff(\Lambda)
     \to\Co_{k-1}\Ff(\Lambda)
$$
given by the connecting homomorphism
in the homology sequence
of the triple $(F_k,F_{k-1},F_{k-2})$.
In fact, the triple boundary operator is
the composition
\begin{equation}\label{eq:triple-boundary}
     \p_k^{trip}:
     \Ho_k(F_k,F_{k-1})
     \stackrel{\p\:}{\longrightarrow}
     \Ho_{k-1}(F_{k-1})
     \stackrel{j_*\:}{\longrightarrow}
     \Ho_{k-1}(F_{k-1},F_{k-2})
\end{equation}
of the connecting homomorphism $\p$
associated to the pair $(F_k,F_{k-1})$
and the quotient induced homomorphism $j_*$
associated to the pair $(F_{k-1},F_{k-2})$.
It is well known that {\bf cellular homology
\boldmath $\Ho_*\Ff(\Lambda)$},
that is the homology associated to the
cellular complex, is naturally\footnote{
  {\bf Natural} in the usual sense that
  these isomorphisms commute
  with the homomorphisms induced
  by {\bf cellular maps}, that is continuous
  maps $f:\Lambda\to\Lambda^\prime$
  such that $f(F_k)\subset F_k^\prime$ $\forall k$.
  }
isomorphic to singular homology of the
topological space
$\Lambda$ itself; see 
e.g.~\cite[Section~V.1]{dold:1995a} or~\cite{milnor:1965a}.
\begin{definition}\label{def:Morse-filtration}
A cellular filtration
$\Ff^a=\left( F_k\right)_{k\in\Z}$ of $\Lambda^a M$
is called a {\bf\boldmath Morse filtration
associated to the action
$\Ss_\Vv$ on $\Lambda^a M$}
if each relative homology group
$\Ho_k(F_k,F_{k-1})$ is generated by
(the classes of appropriate disks $D^u_x$ contained in)
the unstable manifolds of the
critical points of Morse index $k$
and, in addition, every $x\in\Crit^a_k$ lies in
$F_k\setminus F_{k-1}$. Consequently
$F_k\cap\Crit^a=\Crit^a_{\le k}$.
\end{definition}

Observe that for a Morse filtration
$\Ho_\ell(F_k,F_{k-1})$ is isomorphic
to $\Z^{\Crit^a_k}$, if $\ell=k$, although not
naturally and it is trivial, otherwise. 
By $a_k$ we denote the {\bf positive generator}
of $\Ho_k(\D^k,\SS^{k-1})$, that is the class
$[\D^k_{\langle \mathrm{can}\rangle}]$ of the unit
disk equipped with the canonical orientation;
see Definition~\ref{def:canonical-orientation}.

\begin{theoremABC}[Morse filtration and natural
isomorphism]\label{thm:cellular-filtration}
\mbox{ }
\begin{enumerate}
\item[a)]
Consider the Morse-Smale function $\Ss_\Vv$ on
$\Lambda^a M$ given by~(\ref{eq:action-fct}).
There exists an associated Morse filtration,
namely the sequence of subsets
$\Ff(\Lambda^a M)=\left( F_k\right)$
defined by~(\ref{eq:F_k}--\ref{eq:F_m}).
Furthermore, for every regular value $b\le a$
there is a Morse filtration
$\Ff(\Lambda^b M)=\left( F_k^b\right)$
such that the inclusion map $\iota:\Lambda ^b M
\hookrightarrow\Lambda^a M$ is cellular.

\item[b)] 
Let $\Ff^a=\Ff(\Lambda^a M)$ be given by~a).
Pick an integer $k\ge 0$ and a (finite) list
$\vartheta=(\vartheta^x)$ of diffeomorphisms
$\vartheta^x:(\D^k,\Ss^{k-1})\to (D^u_x,S^u_x)$
between the unit disk and certain descending
disks $D^u_x$,
see~(\ref{eq:vartheta-2}), one for each
$x\in\Crit^a_k$. Then there is an isomorphism
$\Theta_k$ 
determined by
\begin{equation}\label{eq:Theta-k}
\begin{split}
     \Theta_k=\Theta^a_k(\vartheta):
     \CM_k^a(\Ss_\Vv)
   &\to 
     \Ho_k(F_k,F_{k-1}) =\Co_k\Ff^a
   \\
     \langle x\rangle
   &\mapsto\bar\vartheta^x_*
     (\sigma_{\langle x\rangle} a_k)
     =[D^u_{\langle x\rangle}]
\end{split}
\end{equation}
where $\bar\vartheta^x:
\D^k\stackrel{\vartheta^x}{\cong} D^u_x
\stackrel{\iota}{\hookrightarrow} N_x
\stackrel{\iota^x}{\hookrightarrow} N_k
\stackrel{\iota}{\hookrightarrow} F_k$
denotes the diffeomorphism
composed with inclusions, 
cf.~(\ref{eq:nat-iso}). The sign
$\sigma_{\langle x\rangle}$ of $\vartheta^x$
is defined by~(\ref{eq:sign-vartheta})
and $D^u_{\langle x\rangle}$ denotes the disk
$D^u_x$ oriented by $\langle x\rangle$;
see Figure~\ref{fig:fig-D_u_x}
and~(\ref{eq:vartheta-or}).
\end{enumerate}
\end{theoremABC}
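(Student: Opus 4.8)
The plan is to produce the filtration $\Ff^a$ in two geometric stages and then to read off part~b) as orientation bookkeeping. \textbf{Stage one: an action filtration with Conley handles.} Let $c_1<\dots<c_N\le a$ be the critical values of $\Ss_\Vv$ below $a$, interlace them by regular values $a_0<c_1<a_1<\dots<c_N<a_N=a$ with $a_0$ below $\inf\Ss_\Vv$ (which is finite, since $\Vv$ is bounded), so $\Lambda^{a_0}M=\emptyset$. Because the heat semi-flow $\varphi$ is the forward, i.e.\ downward, gradient flow, each $\Lambda^{a_i}M$ deformation retracts along $\varphi$ onto the union of $\Lambda^{a_{i-1}}M$ with an arbitrarily small neighbourhood of $\Crit\cap\{\Ss_\Vv=c_i\}$; here the absence of backward time does no harm. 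At each critical point $x$ of index $k=\IND_V(x)$ I would fix a Conley pair $(N_x,L_x)$ inside the band $\{a_{i-1}\le\Ss_\Vv\le a_i\}$. The point that forces genuinely new work is that $\varphi$ is not invertible, so a classical isolating block is unavailable; following the abstract, one takes $N_x$ to be a pre-image under the time-$T$-map $\varphi_T$ of a standard block and uses the backward $\lambda$-Lemma of~\cite{weber:2014a} to control the resulting backward behaviour. The pairs at a common level can be chosen mutually disjoint, and since $W^u(x)$ is a contractible $k$-dimensional submanifold by~\cite[\S 6 Thm.~18]{weber:2013b}, the set $D^u_x:=N_x\cap W^u(x)$ is a $k$-disk with boundary sphere $S^u_x:=L_x\cap W^u(x)$.

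\textbf{Stage two: refinement by Morse index.} The decisive input, and the technical heart, is the stable foliation of each Conley pair supplied by~\cite{weber:2014a,weber:2014d}: its leaves fibre $N_x$ over $D^u_x$, and collapsing each leaf to its base point gives a deformation retraction of $(N_x,L_x)$ onto $(D^u_x,S^u_x)$ inside $N_x$. I would then define $F_k$ band by band, attaching to $\Lambda^{a_{i-1}}M$ only those Conley handles $(N_x,L_x)$ with $\IND_V(x)\le k$ and $\Ss_\Vv(x)=c_i$, while deformation retracting the complementary part of the band along $\varphi$; this is the content of~(\ref{eq:F_k}). Then $F_k\subset F_{k+1}$, and $F_k=\Lambda^a M$ as soon as $k\ge\max\{\IND_V(x):x\in\Crit^a\}$, so every singular simplex lies in some $F_k$; this gives Definition~\ref{def:cellular-filtration}(i)--(ii). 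For (iii), excision of $\Lambda^{a_{i-1}}M$ from each band together with the leafwise retractions yields
\[
   \Ho_\ell(F_k,F_{k-1})\;\cong\;\bigoplus_{x\in\Crit^a_k}\Ho_\ell(D^u_x,S^u_x),
\]
which vanishes for $\ell\ne k$ and equals $\Z^{\Crit^a_k}$ in degree $k$, generated by the disk classes; and since $x\in F_k\setminus F_{k-1}$ for $x\in\Crit^a_k$ we get $F_k\cap\Crit^a=\Crit^a_{\le k}$. Hence $\Ff^a$ is a Morse filtration in the sense of Definition~\ref{def:Morse-filtration}. For a regular value $b\le a$ I would keep the Conley handles of the critical points below $b$ inside $\Lambda^b M$ and set $F_k^b:=F_k\cap\Lambda^b M$; the same excision computation shows $\Ff^b$ is a Morse filtration with $\iota(F_k^b)\subset F_k$, so $\iota$ is cellular. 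This settles~a).

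\textbf{Part b).} Given the identification $\Co_k\Ff^a=\Ho_k(F_k,F_{k-1})\cong\bigoplus_{x\in\Crit^a_k}\Ho_k(D^u_x,S^u_x)$ from Stage two, part~b) is bookkeeping. Fix the diffeomorphisms $\vartheta^x:(\D^k,\SS^{k-1})\to(D^u_x,S^u_x)$ and let $\bar\vartheta^x$ be their composites with the inclusions $D^u_x\hookrightarrow N_x\hookrightarrow N_k\hookrightarrow F_k$, where $N_k$ is the union of the index-$k$ Conley sets. Then $\bar\vartheta^x_*(a_k)$ is a free generator of the $x$-summand, and the sign $\sigma_{\langle x\rangle}\in\{\pm1\}$ records whether the orientation transported by $\vartheta^x$ from the canonical one of $\D^k$ agrees with the prescribed orientation $\langle x\rangle$ of $E_x=T_xW^u(x)$. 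Setting $\Theta_k(\langle x\rangle):=\bar\vartheta^x_*(\sigma_{\langle x\rangle}a_k)=[D^u_{\langle x\rangle}]$ as in~(\ref{eq:Theta-k}) and extending linearly defines a homomorphism $\CM_k^a(\Ss_\Vv)\to\Co_k\Ff^a$: reversing $\langle x\rangle$ reverses $\sigma_{\langle x\rangle}$, so the relations~(\ref{eq:relation}) are respected; and it is an isomorphism because, modulo those relations, both groups are free abelian on $\Crit^a_k$ and $\Theta_k$ carries a basis to a basis. I expect the only real obstacle to be Stages one and two---constructing Conley pairs adapted to the non-invertible semi-flow and proving that their stable foliations retract them onto their unstable parts, which is exactly what~\cite{weber:2014a,weber:2014d} provide; once that is in place, the filtration axioms and the bookkeeping of~b) are routine.
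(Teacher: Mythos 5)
Your construction does not produce the filtration the theorem is about. The statement asserts that the \emph{specific} sequence $(F_k)$ of~(\ref{eq:F_k}--\ref{eq:F_m}) is a Morse filtration, and those formulae define $F_k$ as a pre-image $F_k := {\varphi_{T_{k+1}}}^{-1}\left(N_k\cup F_{k-1}\right)$ of an open semi-flow invariant set under a time-$T$ map. You instead build a Milnor-style band-by-band space by interlacing regular and critical values, attaching Conley handles of index $\le k$ at each critical level, and deformation retracting the rest of each band downward, and you describe this as ``the content of~(\ref{eq:F_k}).'' It is not: the pre-image formula does not retract anything. The whole point of~(\ref{eq:F_k}) is that each $F_k$ is manifestly open and semi-flow invariant, which is what makes the homotopy equivalence $\varphi_{T_{k+1}}:F_k\to N_k\cup F_{k-1}$ and the subsequent excision/retraction calculation of $\Ho_\ell(F_k,F_{k-1})$ go through. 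You have also moved the pre-image trick to the wrong place: $N_x$ in~(\ref{eq:Conley-set-NEW}) is \emph{not} a pre-image of a standard block, it is the path component of the set cut out by the two action inequalities; pre-images enter only at the level of $F_k$.

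Independently of the mismatch with~(\ref{eq:F_k}), your Stage-one retraction---that each $\Lambda^{a_i}M$ deformation retracts along $\varphi$ onto $\Lambda^{a_{i-1}}M$ union a small neighbourhood of $\Crit\cap\{\Ss_\Vv=c_i\}$---is exactly where the real work sits and you pass over it. For a semi-flow with non-compact sublevel sets this requires a uniform bound on the time needed to flow all of $L_{k+1}$ (indeed all of $\Lambda^a M\setminus U$) into the target; the paper extracts this from the Palais-Smale condition and the lower bound on $\Ss_\Vv$ in Proposition~\ref{prop:entrance-time}, together with Lemma~\ref{le:34} (Morse--Smale on neighbourhoods) to rule out flow lines running from $N_x$ into a Conley set of equal or higher index. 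Your argument never produces such a uniform time, and without it the pre-image sets (or your claimed retraction images) need not contain the exit sets $L_{k+1}$, which is precisely the inclusion that makes $F_k$ a filtration. The key inputs you do identify correctly---the Conley pairs, the invariant stable foliation, the strong deformation retraction of $(N_x,L_x)$ onto $(D^u_x,S^u_x)$, and the orientation bookkeeping in part~b)---are all used by the paper and your treatment of~b) (relations~(\ref{eq:relation}) respected because $\sigma_{-\langle x\rangle}=-\sigma_{\langle x\rangle}$, basis mapped to basis) matches the paper's.
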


The main point of
Theorem~\ref{thm:cellular-filtration}
is existence of a Morse filtration. The proof
in section~\ref{sec:Morse filtration}
is constructive and relies on the following
key properties.
\begin{itemize}
\item[\bf (F1)]
          Finite Morse index
\item[\bf (F2)]
          $\Ss_\Vv$ is bounded below
\item[\bf (F3)]
          $\Ss_\Vv$ satisfies the Palais-Smale condition
\item[\bf (F4)]
          Morse-Smale on
          neighborhoods (Lemma~\ref{le:34})
\item[\bf (SF1)]
          Suitable definition of a Conley pair
          $(N_x,L_x)$ for every critical point
\item[\bf (SF2)]
          Taking pre-images $(\varphi_s)^{-1}$ substitutes
          non-existing backward flow $\varphi_{-s}$
\end{itemize}
For an overview of the construction
of the Morse filtration
we refer to our survey~\cite{weber:2014b}
in which we also discuss related
previous work~\cite{abbondandolo:2006a}
of Abbondandolo and Majer. For instance,
once one has a Morse filtration
the proof of the following result
is essentially based on their arguments.

\begin{theorem}\label{thm:Morse=triple}
Let the Morse filtration $\Ff^a$
associated to the Morse-Smale function $\Ss_\Vv$
and the isomorphisms
$\Theta_k:\CM_k^a(\Ss_\Vv)\to\Co_k\Ff^a$
be as in Theorem~\ref{thm:cellular-filtration},
then
\begin{equation*}
     \bigl(\p_k^{trip}\circ\Theta_k\bigr)
     \langle x\rangle
     =\sum_{y\in\Crit^a_{k-1}}\sum_{u\in m_{xy}}
     \bar\vartheta^{u(\infty)}_*
     \left(\sigma_{u_*\langle x\rangle} a_{k-1}\right)
     =\bigl(\Theta_{k-1}\circ\p^M_k\bigr)
     \langle x\rangle
\end{equation*}
for every oriented critical point
$\langle x\rangle$, where
$\bar\vartheta^{u(\infty)}_*
\left(\sigma_{u_*\langle x\rangle} a_{k-1}\right)
=\Theta_{k-1}\left( u_*\langle x\rangle\right)$.
\end{theorem}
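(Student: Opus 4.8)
The plan is to prove the two displayed equalities separately, both reducing to understanding the triple boundary operator $\p_k^{trip}$ geometrically in terms of the Morse filtration constructed in Theorem~\ref{thm:cellular-filtration}. Recall that $\p_k^{trip}$ factors as in~(\ref{eq:triple-boundary}) through $\Ho_{k-1}(F_{k-1})$ via the connecting homomorphism $\p$ of the pair $(F_k,F_{k-1})$ followed by $j_*$. So the first task is to compute $\p[D^u_{\langle x\rangle}]\in\Ho_{k-1}(F_{k-1})$, i.e.\ to identify the class represented by the boundary sphere $S^u_x=\p D^u_x$ inside $F_{k-1}$. The key geometric input is the structure of the Morse filtration near the critical points of index $k-1$: each $y\in\Crit^a_{k-1}$ sits in $F_{k-1}\setminus F_{k-2}$, comes with a Conley pair $(N_y,L_y)$ and its pre-image modifications from (SF1)--(SF2), and the descending disk $D^u_y$ generates $\Ho_{k-1}(N_y,N_y\cap F_{k-2})$.

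First I would show that, after pushing into $F_{k-1}$, the boundary sphere $S^u_x$ is homologous to a chain supported near the index-$(k-1)$ critical points. This uses (F2)--(F3): the negative semi-flow $\varphi_s$ together with its pre-images compresses $S^u_x$ onto the union of the disks $D^u_y$ for $y\in\Crit^a_{k-1}$, because every point of $S^u_x$ that does not lie in $F_{k-2}$ flows toward the index-$(k-1)$ stratum (there are no critical points of intermediate action since the index differs by exactly one, or more precisely one contracts Conley pairs onto their unstable part as in the program's key step). The multiplicity with which each $D^u_y$ appears is precisely the signed count of the finite set $m_{xy}$ of connecting trajectories: each $u\in m_{xy}$ contributes a local degree, and the sign is exactly the one built into the definition of the transported orientation $u_*\langle x\rangle$ via~(\ref{eq:or-ds-2}). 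Tracking this sign carefully against the definition~(\ref{eq:sign-vartheta}) of $\sigma$ and the orientation conventions~(\ref{eq:vartheta-or}) for $D^u_{\langle x\rangle}$ gives
$$
     j_*\,\p\,[D^u_{\langle x\rangle}]
     =\sum_{y\in\Crit^a_{k-1}}\sum_{u\in m_{xy}}
     \bar\vartheta^{u(\infty)}_*
     \left(\sigma_{u_*\langle x\rangle}\,a_{k-1}\right),
$$
which is exactly the middle expression. Combining with~(\ref{eq:Theta-k}) applied to $x$ on the left and the identity $\bar\vartheta^{u(\infty)}_*(\sigma_{u_*\langle x\rangle}a_{k-1})=\Theta_{k-1}(u_*\langle x\rangle)$ stated in the theorem, the right-hand equality $\sum_{y}\sum_{u}\Theta_{k-1}(u_*\langle x\rangle)=\Theta_{k-1}(\p^M_k\langle x\rangle)$ follows immediately from linearity of $\Theta_{k-1}$ and the definition of $\p^M_k$. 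So the right-hand equality is essentially bookkeeping once the middle one is established.

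The hard part will be the degree-with-sign computation that yields the middle expression, i.e.\ showing that the homology class of $S^u_x$ in $F_{k-1}$, pushed to $\Ho_{k-1}(F_{k-1},F_{k-2})$, decomposes exactly as the signed sum over $m_{xy}$ with the \emph{correct} signs $\sigma_{u_*\langle x\rangle}$. Two things make this delicate. First, the semi-flow has no backward direction, so the compression of $S^u_x$ onto the index-$(k-1)$ disks must be done using the pre-image construction (SF2) and the contraction of Conley pairs onto their unstable part — one must check this contraction is compatible with the filtration, i.e.\ sends $F_{k-1}$ to $F_{k-1}$ and $F_{k-2}$ to $F_{k-2}$, so that it descends to relative homology. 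Second, the sign: one must verify that the local intersection sign at each $u\in m_{xy}$, coming from the splitting~(\ref{eq:or-ds}) of $T_\gamma W^u(x)$, matches the sign $\sigma_{u_*\langle x\rangle}$ attached to the chosen parametrization $\vartheta^{u(\infty)}$ of $D^u_{u(\infty)}$ via~(\ref{eq:sign-vartheta}). This is where the orientation transport formula~(\ref{eq:or-ds-2}) enters crucially, and it is here that following the Abbondandolo--Majer argument~\cite{abbondandolo:2006a} (adapted to the semi-flow setting with (SF1)--(SF2)) does the work; the adaptation is the genuinely new content and the main obstacle, all the rest being a careful but standard translation between the dynamical-systems picture~(\ref{eq:MS-DS})--(\ref{eq:m_xy}) and the algebraic triple sequence~(\ref{eq:triple-boundary}).
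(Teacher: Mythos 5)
Your overall strategy matches the paper's: flow the boundary sphere $S^u_x$ forward by a large time $T$, argue that most of it enters $F_{k-2}$ while small pieces get stuck near the index-$(k-1)$ critical points, decompose the sphere class accordingly, identify each local piece with an oriented descending disk $D^u_y$, and track signs against~(\ref{eq:or-ds-2}) and~(\ref{eq:sign-vartheta}). The second equality is indeed pure bookkeeping once the middle identity is in place, just as you say, and your instinct that the algebraic decomposition of the $(k-1)$-sphere over the punched balls is an Abbondandolo--Majer excision step is exactly right; the paper uses precisely their identity $J_*(b_{k-1})=\sum_\ell\bar\theta^\ell_*(a_{k-1})$.

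There is, however, a genuine gap at the crucial local step, and you are reaching for the wrong tool. To prove that the thickened flow-line piece $B^T_\ell=\varphi_T\alpha^x(B_\ell)$ represents the same class as $D^u_{y(\ell)}$ in $\Ho_{k-1}(F_{k-1},F_{k-2})$, you invoke ``contracting Conley pairs onto their unstable part'' (i.e.\ Theorem~\ref{thm:deformation-retract}, whose engine is the \emph{backward} $\lambda$-Lemma). But that deformation retraction is what establishes the chain group isomorphism~(\ref{eq:Theta-k}) and the Morse filtration property; it does not identify $[B^T_\ell]$ with $[D^u_y]$. The identification needs a different statement: one must show that, in local coordinates near $y$, the part $D^T_\ell:=B^T_\ell\cap(B^u\times B^+)$ is the \emph{graph} of a $C^1$ map $G^T:B^u\to B^+$ with $\norm{G^T}_{C^1}\to 0$ as $T\to\infty$, so that $\graph\,\lambda G^T$ for $\lambda\in[0,1]$ furnishes an isotopy of $D^T_\ell$ to $B^u$ with boundaries remaining in $F_{k-2}$. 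This is the \emph{forward} $\lambda$-Lemma --- a statement the paper explicitly flags as a surprise, since everywhere else only the backward version is used --- and without naming it your ``compression onto the union of the $D^u_y$'' remains an assertion rather than an argument. (As the paper notes, since all relevant dynamics lives in the finite-dimensional $W^u(x)$ near $y$, even the classical finite-dimensional $\lambda$-Lemma would do, or the hyperbolic estimates from Abbondandolo--Majer's proof of their Theorem~2.11; but one of these must be invoked explicitly, and it is not the Conley-pair retraction.) Secondary issues: the compression is by the \emph{forward} time-$T$ map applied to $S^u_x$, not by pre-images --- pre-images are only used to build the filtration --- and one must also verify that $B^T_\ell\setminus D^T_\ell$ already lies in $F_{k-2}$ and that $\p D^T_\ell\subset\p^u V_r\subset F_{k-2}$ before the isotopy makes sense relative to the pair.
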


\subsection{Stable foliations for Conley pairs}
\label{sec:intro-stab-fol}

The proof that the filtration $\Ff^a=(F_k)$
defined by~(\ref{eq:F_k}--\ref{eq:F_m})
is \emph{Morse} hinges on two properties
of the subsets $F_k\subset \Lambda^a M$:
\emph{openness} and \emph{semi-flow invariance}.
Suppose $F_0\subset\Lambda M$ is
open and semi-flow invariant and consider,
for instance, a local sublevel set about some
nondegenerate local minimum $y$. Then
the pre-image ${\varphi_s}^{-1} F_0$ is open
by continuity of the time-$s$-map. It is also
semi-flow invariant, because $F_0$ is.
Now suppose $x$ is a nondegenerate
critical point of Morse index one.
Its unstable manifold connects to such $y$.
The problem is that~$x$, although approximated
for large $s$, will never be included in the pre-image.
Now the basic idea of Conley theory~\cite{conley:1978a}
enters, namely the notion of an isolating neighborhood
$N$ with exit set $L$.
Suppose $N_x$ is an open neighborhood
of $x$ which admits a subset $L_x$
through which any trajectory leaving
$N_x$ has to go first. Suppose further
that there is some large time $T$ such
that the pre-image ${\varphi_T}^{-1} F_0$
contains $L_x$.
Then the union ${\varphi_s}^{-1} F_0 \cup N_x$
has both desired properties.

\begin{figure}
  \centering
  \includegraphics{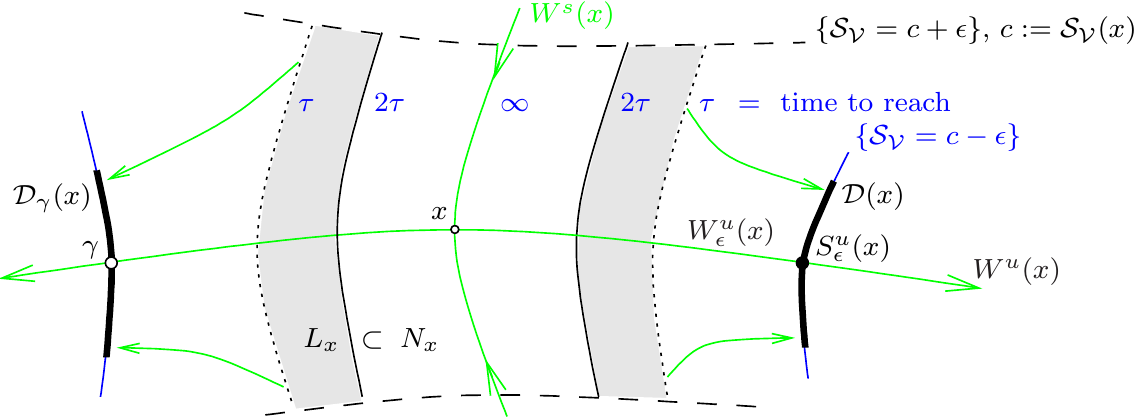}
  \caption{Conley pair $(N_x,L_x)$ for
           critical point $x$
           }
  \label{fig:fig-Conley-pair}
\end{figure}

\begin{definition}\label{def:index-pair}
A {\bf\boldmath Conley pair $(N,L)$
for a critical point $x$} of $\Ss_\Vv$
consists of an open subset $N\subset\Lambda M$
and a closed subset $L\subset N$ which satisfy
\begin{enumerate}
  \item[(i)]
     $x\in N\setminus L$
  \item[(ii)]
     $\cl N\cap\Crit \Ss_\Vv=\{x\}$
  \item[(iii)]
     $\gamma\in L$ and $\varphi_{[0,s]}\gamma\subset N
     \,\Rightarrow\,
     \varphi_s\gamma\in L$
  \item[(iv)]
     $\gamma\in N$ and 
     $\varphi_T\gamma\notin N
     \,\Rightarrow\,
     \exists \sigma\in(0,T):
     \varphi_\sigma\gamma\in L$ and
     $\varphi_{[0,\sigma]}\gamma\subset N$
\end{enumerate}
In particular,
conditions~(i) and~(ii) tell that $N$
is an open neighborhood of $x$ which
contains no other critical points in its closure.
Condition~(iii) says that {\bf\boldmath$L$
is positively invariant in $N$}
and~(iv) asserts that
every semi-flow line which leaves
$N$ goes through $L$ first.
Hence we say that {\bf\boldmath $L$
is an exit set of $N$}.
\end{definition}

Given a nondegenerate critical point $x$ of
$\Ss_\Vv$, set $c:=\Ss_\Vv(x)$. Borrowing from
finite dimensions~\cite{salamon:1990a}
we define the two sets
\begin{equation}\label{eq:Conley-set-NEW}
\begin{split}
     N_x=N_x^{\eps,\tau}
   :&=
     \left\{\gamma\in\Lambda M\mid
     \text{$\Ss_\Vv(\gamma)<c+\eps$,
     $\Ss_\Vv(\varphi_\tau\gamma)>c-\eps$}
     \right\}_x ,
\end{split}
\end{equation}
where $\{\ldots\}_x$ denotes the
{\bf path connected component}
that contains $x$, and
\begin{equation}\label{eq:L_x}
     L_x=L_x^{\eps,\tau}
     :=\{\gamma\in N_x\mid
     \Ss_\Vv(\varphi_{2\tau}\gamma)\le c-\eps\}.
\end{equation}
Note that $L_x$ is a relatively closed subset of the open
subset $N_x$ of $\Lambda M$.

\begin{theorem}[Conley pair]\label{thm:Conley-pair}
The pair $(N_x,L_x)$ defined
by~(\ref{eq:Conley-set-NEW}-\ref{eq:L_x})
is a Conley pair for the nondegenerate
critical point $x$ for all $\eps>0$ small
and $\tau>0$ large.
\end{theorem}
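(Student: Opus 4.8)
The plan is to verify the four axioms (i)--(iv) of Definition~\ref{def:index-pair} for the explicit pair $(N_x,L_x)$ given by~(\ref{eq:Conley-set-NEW}--\ref{eq:L_x}). Axiom~(i) is immediate: the constant trajectory at $x$ has $\Ss_\Vv(\varphi_\tau x)=\Ss_\Vv(x)=c$, so $x\in N_x$, while $\Ss_\Vv(\varphi_{2\tau}x)=c>c-\eps$ shows $x\notin L_x$. Axiom~(iii) (positive invariance of $L_x$ in $N_x$) follows because $s\mapsto\Ss_\Vv(\varphi_s\gamma)$ is non-increasing along the heat semi-flow, hence if $\gamma\in L_x$ and $\varphi_{[0,s]}\gamma\subset N_x$ then $\Ss_\Vv(\varphi_{2\tau}(\varphi_s\gamma))=\Ss_\Vv(\varphi_{s+2\tau}\gamma)\le\Ss_\Vv(\varphi_{2\tau}\gamma)\le c-\eps$; one still has to check that $\varphi_s\gamma$ satisfies the two defining \emph{strict} inequalities of $N_x$ and lands in the correct path component, but monotonicity of the action and continuity of the flow handle this once $\varphi_{[0,s]}\gamma\subset N_x$ is assumed.

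The substantive work is axiom~(ii), $\cl N_x\cap\Crit\Ss_\Vv=\{x\}$, and axiom~(iv), the exit-set property; these are exactly where the hypotheses ``$\eps$ small'' and ``$\tau$ large'' are consumed. For~(ii), I would argue by contradiction: if for a sequence $\eps_n\to 0$ there were critical points $x_n\ne x$ in $\cl N_x^{\eps_n,\tau}$, then $\Ss_\Vv(x_n)\to c$ and (since $x_n$ lies in the path component of $x$ and is uniformly close to the level $c$) Palais--Smale compactness (F3) together with nondegeneracy of $x$ (which isolates $x$ in $\Crit\Ss_\Vv$, using (F1)) forces $x_n\to x$ for large $n$, contradicting nondegeneracy again since nearby critical points cannot exist. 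For~(iv), suppose $\gamma\in N_x$ but $\varphi_T\gamma\notin N_x$ for some $T>0$. Since $\Ss_\Vv$ decreases along the flow, the inequality $\Ss_\Vv(\varphi_s\gamma)<c+\eps$ persists for all $s\ge 0$; therefore $\varphi_T\gamma$ leaves $N_x$ either because the second defining inequality fails, i.e.\ $\Ss_\Vv(\varphi_{\tau}(\varphi_\sigma\gamma))=\Ss_\Vv(\varphi_{\sigma+\tau}\gamma)\le c-\eps$ for some first time $\sigma\le T$, or because the trajectory exits the path component of $x$. In the first case I would show $\varphi_\sigma\gamma\in L_x$ by choosing $\tau$ large enough that the action drops by a definite amount over any time interval of length $\tau$ along a trajectory that has not yet converged to a critical point near level $c$ --- more precisely, taking $\sigma'$ slightly before $\sigma$ one gets $\Ss_\Vv(\varphi_{\sigma'+2\tau}\gamma)\le\Ss_\Vv(\varphi_{\sigma+\tau}\gamma)\le c-\eps$ while $\varphi_{[0,\sigma']}\gamma\subset N_x$; the path-component exit case is ruled out, for $\eps$ small, because a trajectory starting in $N_x$ and staying in the sublevel set $\{\Ss_\Vv<c+\eps\}$ must remain in the component of $x$ as long as it has not crossed below level $c-\eps$, which again uses that $x$ is the only critical point near level $c$.

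The main obstacle I anticipate is making the ``$\tau$ large'' argument in~(iv) quantitative: one needs that along any heat trajectory segment contained in $N_x$ that has \emph{not} essentially converged to $x$, the action strictly decreases at a rate bounded below, so that over a time span $2\tau$ it drops by at least $\eps$ once it has passed below $c+\eps$ and is heading down. This is a uniform gradient-bound statement on the region $\cl N_x\setminus B(x)$ (a small ball around $x$ removed), and it rests on Palais--Smale: on this region $\|\grad\Ss_\Vv\|$ is bounded below, so $\frac{d}{ds}\Ss_\Vv(\varphi_s\gamma)=-\|\grad\Ss_\Vv(\varphi_s\gamma)\|^2\le-\delta^2$ there. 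Choosing first $\eps$ small (to isolate $x$ and control path components via~(ii)) and then $\tau$ with $\tau\delta^2\ge\eps$ closes the loop. I would organize the write-up as: (1) the elementary verifications of (i) and (iii); (2) a lemma extracting the uniform gradient lower bound on $\cl N_x\setminus B(x)$ from Palais--Smale and nondegeneracy; (3) the compactness argument for~(ii); (4) the exit-set argument for~(iv) assembling (2) and (3), with the explicit choice of $\eps$ then $\tau$.
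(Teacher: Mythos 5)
Parts (i) and (iii) of your verification match the paper's. The genuine gap is in (ii), and it propagates to (iv).

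For (ii), you argue that shrinking $\eps$ alone, combined with Palais--Smale, forces any putative sequence of extraneous critical points $x_n\in\cl N_x^{\eps_n,\tau}$ to converge to $x$. This does not follow. Palais--Smale and the fact that $\Ss_\Vv$ is Morse below level $a$ give you that $(x_n)$ is eventually some fixed critical point $z$ with $\Ss_\Vv(z)=c$, but nothing rules out $z\neq x$: there can be several critical points at the same action level $c$, and the question becomes whether $z$ lies in the \emph{same path component} of $N_c^{\eps_n,\tau}$ as $x$. That is a global topological separation statement and is not delivered by compactness alone. The paper handles exactly this issue via Theorem~\ref{thm:inv-fol}: the backward $\lambda$-Lemma shows $N_x^{\eps,\tau}$ is contained in a fixed coordinate patch around $x$ (proof of part~a), and part~d) even localizes $N_x^{\eps,\tau}$ into any prescribed neighborhood of $x$ --- but this uses both $\eps$ small \emph{and} $\tau$ large. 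Indeed the paper flags ``can this also be achieved by shrinking only $\eps$?'' as an open problem in Remark~\ref{rem:shrink-N}, so your claim would, if correct, settle it; as written it is unjustified. This is the central role of the backward $\lambda$-Lemma in this theorem, and your proposal omits it entirely.

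For (iv), your uniform gradient-bound scheme (``over a time span $2\tau$ the action drops by $\eps$'') is both more elaborate than needed and shaky: the gradient bound is only available off a small ball about $x$, while a trajectory entering $N_x$ near the local stable manifold of $x$ can linger arbitrarily long near $x$ before descending. The paper avoids this entirely: once (ii) is in place, the exit time $T_*$ at which the trajectory crosses level $c-\eps$ is finite (it is $\le\tau+T$ by the hypothesis $\varphi_T\gamma\notin N_x$ and monotonicity of the action), and the bounds $T_*>\tau$, $T_*>2\tau$ follow from $\gamma\in N_x$ and $\gamma\notin L_x$; no gradient estimate is used. Your handling of the path-component exit case also misattributes the reason: one does not need ``$x$ is the only critical point near level $c$'' there --- if $\Ss_\Vv(\varphi_{\tau+s}\gamma)>c-\eps$ for all $s\in[0,T]$ then the whole flow segment lies in $N_c^{\eps,\tau}$, hence in the component of $x$, by connectedness of the segment; that is all. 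Summarizing: the essential missing ingredient in your proposal is the backward $\lambda$-Lemma / invariant stable foliation (Theorem~\ref{thm:inv-fol}), which is what actually proves (ii) and is where the hypotheses ``$\eps$ small and $\tau$ large'' are spent.
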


  Theorem~\ref{thm:Conley-pair}
  holds for all $\eps\in(0,\mu]$ and $\tau>\tau_0$
  with $\mu$ and $\tau_0$
  as in~(H4) of
  Hypothesis~\ref{hyp:local-setup-N}.
  In this case all ascending/descending
  disks $W^{s,u}_\eps$ and spheres
  $S^{s,u}_\eps$ are manifolds.

Figure~\ref{fig:fig-Conley-pair} shows a typical
Conley pair, illustrates the exit set property
of $L_x$, and indicates
hypersurfaces which are characterized by the
fact that each point reaches the level set
$\{\Ss_\Vv=c-\eps\}$ in the same time.
The points on the stable manifold
never reach level $c-\eps$, so they are
assigned the time label $\infty$. By the
Backward $\lambda$-Lemma~\cite{weber:2014b}
locally near $x$ these hypersurfaces fiber
over descending disks into diffeomorphic
copies of the local {\bf stable} manifold.
This provides a foliation of small
neighborhoods of $x$ the leaves of which,
apriori, have no global meaning.
It is the main content of
Theorem~\ref{thm:inv-fol}
to express such neighborhoods and leaves in
terms of (globally defined) level sets of the
 action functional. The
difficulty being infinite dimension. Concerning
the naming {\bf invariant stable foliation}
note the boldface 'stable' above and~a)
below, whereas {\bf invariant} refers to~b).
Parts~c) and~d) are quite useful as they
allow to contract $N_x$ onto the ascending
disk or even fit $N_x$ into any given
neighborhood of $x$.

\begin{theoremABC}[Invariant stable foliation]
\label{thm:inv-fol}
Pick a nondegenerate critical point $x$
of $\Ss_\Vv$ and set $c:=\Ss_\Vv(x)$.
Then for every small $\eps>0$ the following is true.
Consider the {\bf descending sphere} and the
{\bf descending disk} given by
\begin{equation}\label{eq:Desc-sphere}
     S^u_\eps(x):=W^u(x)\cap\{\Ss_\Vv=c-\eps\}
     ,\quad
     W^u_\eps(x):=W^u(x)\cap\{\Ss_\Vv>c-\eps\}.
\end{equation}
Pick a tubular neighborhood $\Dd(x)$
(associated to a radius $r$ normal disk bundle)
over $S^u_\eps(x)$ in the level hypersurface
$\{\Ss_\Vv=c-\eps\}$.
Denote the fiber over
$\gamma\in S^u_\eps(x)$ by $\Dd_\gamma(x)$;
see Figure~\ref{fig:fig-Conley-pair}.
Then the following holds
for every large $\tau>0$.\footnote{
  Hypothesis~\ref{hyp:local-setup-N} (H4) specifies
  the precise ranges of $\eps$ and $\tau$.
  }
\begin{enumerate}
\item[a)]
  The set $N_x=N_x^{\eps,\tau}$
  defined by~(\ref{eq:Conley-set-NEW})
  contains in its closure no critical points
  except $x$.
  Moreover, it carries the
  structure of a codimension-$k$
  foliation\footnote{
    For the precise degree of smoothness we refer
    to the backward
    $\lambda$-Lemma~\cite[Thm.~1]{weber:2014a}.
    }
 whose leaves are parametrized by the $k$-disk
  $\varphi_{-\tau} W^u_\eps(x)$ where $k$ is
  the Morse index of $x$.
  The leaf $N_x(x)$ over $x$ is the
  ascending disk $W^s_\eps(x)$. The other leaves
  are the codimension-$k$ disks given by
  \begin{equation*}
     N_x(\gamma_T)
     =\left({\varphi_T}^{-1}\Dd_\gamma (x)\cap
     \{\Ss_\Vv<c+\eps\}\right)_{\gamma_T}
     ,\qquad
     \gamma_T:=\varphi_{-T}\gamma,
  \end{equation*}
  whenever $T>\tau$ and $\gamma\in S^u_\eps(x)$.
\item[b)]
  Leaves and semi-flow are compatible in the sense that
  \begin{equation*}
     z\in N_x(\gamma_T)
     \quad\Rightarrow\quad
     \varphi_\sigma z\in N_x(\varphi_\sigma\gamma_T)
     \quad \forall \sigma\in[0,T-\tau).
  \end{equation*}
\item[c)]
  The leaves converge uniformly to the ascending disk
  in the sense that
  \begin{equation}\label{eq:unif-exp-dist}
     \dist_{W^{1,2}}\left(N_x(\gamma_T),W^s_\eps(x)\right)
     \le e^{-T\frac{\lambda}{16}}
  \end{equation}
  for all $T>\tau$ and $\gamma\in S^u_\eps(x)$;
  see~(H4) below for $\lambda$.
  If $U$ is a neighborhood
  of the closure of $W^s_\eps(x)$ in $\Lambda M$, then
  $N_x^{\eps,\tau_*}\subset U$ for some constant $\tau_*$.
\item[d)]
  Assume $U$ is a neighborhood of $x$ in
  $\Lambda M$. Then there are constants $\eps_*$ and
  $\tau_*$ such that $N_x^{\eps_*,\tau_*}\subset U$.
\end{enumerate}
\end{theoremABC}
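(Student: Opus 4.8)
### Proof Plan for Theorem D (Invariant stable foliation)

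The plan is to reduce the global statement to the local picture near $x$ supplied by the Backward $\lambda$-Lemma~\cite{weber:2014a}, and then propagate that local foliation outward along the semi-flow by taking pre-images. First I would fix, via Hypothesis~\ref{hyp:local-setup-N} (H4), a Palais-coordinate chart around $x$ in which $\Ss_\Vv$ is in normal form and the local stable and unstable manifolds $W^{s,u}_{\loc}(x)$ are the coordinate planes; the constants $\mu$ and $\tau_0$ there will determine the admissible ranges of $\eps$ and $\tau$. Part~a) splits into two claims. The statement that $\cl N_x$ contains no critical point except $x$ is exactly (ii) of the Conley pair (Theorem~\ref{thm:Conley-pair}), so it is already available; I would only recall the argument: any $\gamma\in\cl N_x$ satisfies $c-\eps\le\Ss_\Vv(\gamma)\le c+\eps$, and for $\eps$ small the only critical point in that action window that lies in the connected component of $x$ is $x$ itself, by the Palais-Smale condition (F3) and finiteness of $\Crit^a$ in any sublevel set. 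The foliation claim is the heart of~a): for a point $\gamma\in S^u_\eps(x)$ and $T>\tau$, the hypersurface $\{\Ss_\Vv=c-\eps\}$ is subdivided into the disk fibers $\Dd_\gamma(x)$ of the chosen tubular neighborhood $\Dd(x)$, and the leaf $N_x(\gamma_T)$ is defined as the connected component through $\gamma_T=\varphi_{-T}\gamma$ of ${\varphi_T}^{-1}\Dd_\gamma(x)\cap\{\Ss_\Vv<c+\eps\}$. I would verify that these sets are disjoint (two trajectories starting in $N_x$ and hitting $\{\Ss_\Vv=c-\eps\}$ do so exactly once, by the gradient/monotonicity property of the heat flow, and land in different fibers), that they cover $N_x$ (every $z\in N_x$ with $\Ss_\Vv(\varphi_\tau z)>c-\eps$ either lies on $W^s_\eps(x)$ — the label-$\infty$ leaf $N_x(x)$ — or flows down through some $\Dd_\gamma(x)$ at a unique time, giving its leaf), and that near $x$ this coincides with the Backward $\lambda$-Lemma foliation, which supplies the local smooth product structure and hence the codimension-$k$ foliation chart; $k=\IND_V(x)=\dim W^u_\eps(x)$ and the leaf space is diffeomorphic to $\varphi_{-\tau}W^u_\eps(x)$ via $\gamma\mapsto\gamma_\tau$.

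For part~b), the compatibility $z\in N_x(\gamma_T)\Rightarrow\varphi_\sigma z\in N_x(\varphi_\sigma\gamma_T)$ for $\sigma\in[0,T-\tau)$ is a formal consequence of the definition together with the semi-flow identity $\varphi_{T-\sigma}\circ\varphi_\sigma=\varphi_T$: if $\varphi_T z\in\Dd_\gamma(x)$ then $\varphi_{T-\sigma}(\varphi_\sigma z)\in\Dd_\gamma(x)$, so $\varphi_\sigma z$ lies in ${\varphi_{T-\sigma}}^{-1}\Dd_\gamma(x)$, and I must check it stays in $\{\Ss_\Vv<c+\eps\}$ and in the correct connected component. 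The action bound is where the restriction $\sigma<T-\tau$ is used: one needs $\Ss_\Vv(\varphi_\sigma z)<c+\eps$, and more delicately that $\varphi_\sigma z$ is still in the relevant component containing $\varphi_\sigma\gamma_T$; continuity of the flow in initial conditions on the compact time interval, plus the uniform exponential estimate from~c), keeps the flowed leaf in a thin tube around $\varphi_\sigma W^s_\eps$-type sets, preventing it from crossing into another component or exceeding the action ceiling before time $T-\tau$.

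Part~c) is the quantitative core and where I expect the main obstacle. The estimate~(\ref{eq:unif-exp-dist}), $\dist_{W^{1,2}}(N_x(\gamma_T),W^s_\eps(x))\le e^{-T\lambda/16}$, must come from the exponential contraction rate $\lambda$ of the linearized semi-flow transverse to $W^u$ near $x$, as recorded in (H4). The difficulty is genuinely infinite-dimensional: the heat semi-flow is only defined forward, is not a diffeomorphism, and lacks backward smoothing, so one cannot simply conjugate to the linear model by a time-reversed argument. I would instead invoke the Backward $\lambda$-Lemma directly — that is precisely the technical input engineered to give exponential $C^1$-closeness of ${\varphi_T}^{-1}(\text{local disk})$ to $W^s_{\loc}(x)$ at rate governed by $\lambda$ — and then patch the local estimate to the global leaf: for $T$ large the whole preimage ${\varphi_T}^{-1}\Dd_\gamma(x)\cap\{\Ss_\Vv<c+\eps\}$ is forced into the Palais chart (because $\varphi_s\gamma_T$ approaches the local unstable manifold as the flow runs, so the leaf sits in a shrinking neighborhood of $W^s_\eps(x)$), and there the $\lambda$-Lemma estimate applies verbatim, the factor $1/16$ absorbing the coordinate-change and interpolation constants. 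The uniformity in $\gamma\in S^u_\eps(x)$ follows from compactness of the descending sphere $S^u_\eps(x)$. The two nesting statements then fall out: given a neighborhood $U\supset\cl W^s_\eps(x)$, choosing $\tau_*$ with $e^{-\tau_*\lambda/16}$ smaller than the width of $U$ forces $N_x^{\eps,\tau_*}\subset U$, which is part~c), second sentence; and for part~d), given an arbitrary neighborhood $U$ of $x$, first shrink $\eps$ to $\eps_*$ so that $\cl W^s_{\eps_*}(x)\subset U$ (local stable manifold theorem, since $W^s_{\eps}(x)\to\{x\}$ as $\eps\to0$), then apply~c) to find $\tau_*$ with $N_x^{\eps_*,\tau_*}\subset U$. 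The remaining care is checking that $\cl N_x$ really is contained in the chart for the chosen parameters so that "leaf" and "foliation chart" are globally consistent — this is bookkeeping with (H4), but it is the place where an incautious choice of constants could break the argument.
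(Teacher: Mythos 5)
Your proposal identifies the right key tool (the Backward $\lambda$-Lemma, invoked through Hypothesis~\ref{hyp:local-setup-N}~(H4)) and the right overall strategy (build the local foliation from the graph maps and then reconcile it with the globally defined set $N_x^{\eps,\tau}$), so the approach is the same as the paper's. However you gloss over precisely the places where the paper's proof does real work, and one of your arguments is shakier than the one actually used.

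For part~a) the paper's proof is organized into three steps whose content is absent from your sketch. Step~1 addresses the fact that the graphs $\Gg^T_\gamma(\Bb^+)$ are manifolds with boundary: to conclude that $\Gg^T_\gamma(\Bb^+)\cap\{\Ss<c+\eps\}$ is a disk one must show the boundary of the graph stays uniformly away from $\{\Ss<c+\eps\}$. The paper does this with a quantitative radius argument: the graph maps are defined on a ball of radius $2R$, the ascending disk projects into $\Bb^+_R$, and $C^1$-uniform convergence $\Gg^T_\gamma\to\Gg^\infty$ keeps the boundary at distance at least $R/2$ from the intersection. Your sketch says ``near $x$ this coincides with the Backward $\lambda$-Lemma foliation'' but does not register this boundary issue, which is why the constants $2R$, $R$ and the time threshold $T_1$ appear. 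Step~2 establishes the nontrivial identity $\Gg^T_\gamma(\Bb^+)\cap\{\Ss<c+\eps\}=\bigl({\varphi_T}^{-1}\Dd_\gamma(x)\cap\{\Ss<c+\eps\}\bigr)_{\gamma_T}$, which reconciles the local (graph) description of a leaf with the intrinsic (pre-image) description in the theorem statement; this uses the characterization of the graph maps through the mixed Cauchy problem~\cite[(31)]{weber:2014a}. Step~3 then shows that the union of the truncated graphs equals $N_x^{\eps,\tau}$ as defined by~(\ref{eq:Conley-set-NEW}). That equality is not at all automatic: one direction requires producing an explicit path to $x$ within $N$ through the graphs, and the other direction requires a case analysis (the sets $I$, $II$, $III$ in the paper) to rule out that a point outside the image could still path-connect to $x$ within $N$. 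Your phrase ``I would verify that these sets are disjoint... that they cover $N_x$'' names the goal but not the mechanism; that verification \emph{is} the proof of~a).

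For part~b) your argument is weaker than the paper's and is also slightly circular: you invoke ``the uniform exponential estimate from~c)'' to keep the flowed leaf from crossing into another component, but~c) is derived \emph{after}~b) in the logical order and is not the right tool here. The paper's argument is cleaner and self-contained: Corollary~\ref{cor:inv-fol} already gives leaf-compatibility inside the full graph image $F$, so the only issue is that a trajectory starting and ending in $N$ not leave $N$ in between. This is handled by decomposing $\p N$ into the top part $\p^+N\subset\{\Ss=c+\eps\}$ (unreachable from below by the downward gradient property) and the side part $\p^-N=\bigcup_\gamma\Gg^\tau_\gamma(\Bb^+)\cap\{\Ss<c+\eps\}$ (cannot be crossed twice, by uniqueness of the solution to the Cauchy problem). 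No appeal to the exponential estimate is needed or appropriate. Your treatment of parts~c) and~d) is correct and matches the paper. Finally, a small slip: the leaf space is parametrized by the $k$-disk $\varphi_{-\tau}W^u_\eps(x)=\{x\}\cup\bigcup_{T>\tau}\varphi_{-T}S^u_\eps(x)$ via $(T,\gamma)\mapsto\gamma_T=\varphi_{-T}\gamma$ together with $x$, not by a map $\gamma\mapsto\gamma_\tau$ as you wrote.
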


\begin{figure}
  \centering
  \includegraphics{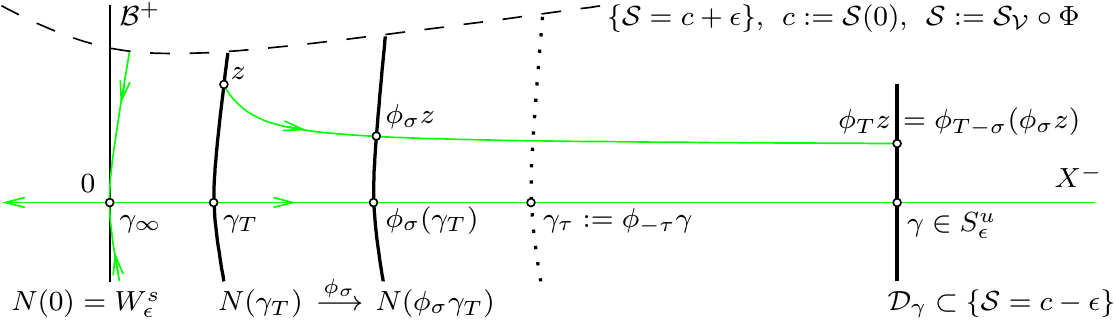}
  \caption{Invariant foliation of
                 $N=N^{\eps,\tau}$
                 in local coordinates of
                 Hypothesis~\ref{hyp:local-setup-N}}
  \label{fig:fig-flowinv-foliation}
\end{figure}

\begin{theoremABC}[Strong deformation retract]\label{thm:deformation-retract}
Pick one of the Conley pairs $(N_x,L_x)$
in Theorem~\ref{thm:Conley-pair} and abbreviate by
$$
     N^u_x:=N_x\cap W^u(x),\qquad
     L^u_x:=L_x\cap W^u(x).
$$
the corresponding parts in the unstable manifold.
Then the pair of spaces $(N_x,L_x)$ strongly
deformation retracts to $(N_x^u,L_x^u)$.
Moreover, the latter pair consists of an
open disk whose dimension $k$ is the Morse index
of $x$ and an annulus which arises by removing a
smaller open disk from the larger one. 
\end{theoremABC}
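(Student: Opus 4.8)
The plan is to deduce the strong deformation retract directly from the invariant stable foliation of Theorem~\ref{thm:inv-fol}. The key observation is that the foliation of $N_x$ has as its leaf space the descending disk $\varphi_{-\tau}W^u_\eps(x)$ (together with the point $x$ itself, whose leaf is the ascending disk $W^s_\eps(x)$), and each leaf is a contractible codimension-$k$ disk. Since $N_x^u = N_x\cap W^u(x)$ is precisely the $k$-disk parametrizing the leaves (modulo identifying $\varphi_{-\tau}W^u_\eps(x)$ with a piece of $W^u(x)$ via the forward/backward flow along the unstable manifold, which is a diffeomorphism there), the assignment sending each $z\in N_x$ to the base point of its leaf defines a retraction $r:N_x\to N_x^u$. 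The deformation $H:[0,1]\times N_x\to N_x$ is then built leafwise: on each leaf $N_x(\gamma_T)$, which is a disk, contract within the leaf to its center point $\gamma_T\in N_x^u$; on the central leaf $W^s_\eps(x)$ contract within the ascending disk to $x$. One must check that this leafwise contraction can be chosen to depend continuously on the leaf and to fix $N_x^u$ pointwise throughout, i.e. that it is a strong deformation retraction; this follows from the smoothness of the foliation asserted in Theorem~\ref{thm:inv-fol}a) together with a uniform choice of disk-contraction (e.g. radial contraction in the normal bundle coordinates of the tubular neighborhood $\Dd(x)$, transported by $\varphi_T^{-1}$, with the exponential estimate~(\ref{eq:unif-exp-dist}) guaranteeing the required uniformity as $T\to\infty$ so that the central leaf is approached continuously).

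Next I would verify that this same deformation carries the exit set $L_x$ into $L_x$, so that it restricts to a strong deformation retraction of pairs $(N_x,L_x)\to(N_x^u,L_x^u)$. For this, recall $L_x=\{\gamma\in N_x\mid \Ss_\Vv(\varphi_{2\tau}\gamma)\le c-\eps\}$. The point is that the condition defining $L_x$ is essentially a condition on which leaf $\gamma$ lies in — a leaf $N_x(\gamma_T)$ with $T$ large enough lies entirely in $L_x$ (its points reach level $c-\eps$ by time $2\tau$), while leaves with $\gamma_T$ close to $x$ do not — so $L_x$ is (up to a controlled error) a union of whole leaves, and the leafwise contraction to $N_x^u$ preserves it. More carefully, since the contraction within each leaf does not increase the parameter $T$ (or can be arranged not to decrease the first time the point reaches level $c-\eps$), the defining inequality is preserved along $H$. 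The resulting retract pair $(N_x^u,L_x^u)$ is computed from~(\ref{eq:Conley-set-NEW}-\ref{eq:L_x}) intersected with $W^u(x)$: $N_x^u$ is an open set in the $k$-dimensional unstable manifold containing $x$, and one checks it is an open disk (star-shaped with respect to the descending gradient flow from $x$, using that $W^u(x)$ is an embedded disk on which $\Ss_\Vv$ has $x$ as its unique maximum in $\cl N_x$); similarly $L_x^u$ is obtained by removing the smaller open disk $\{\gamma\in W^u(x)\mid \Ss_\Vv(\varphi_{2\tau}\gamma)>c-\eps\}_x$ from $N_x^u$, hence is an annulus.

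The main obstacle I anticipate is the continuity of the leafwise contraction across the distinguished central leaf $W^s_\eps(x)$: the leaves $N_x(\gamma_T)$ are disks whose ``center'' $\gamma_T=\varphi_{-T}\gamma$ runs off to the boundary of the parametrizing disk as $T\to\infty$, and one needs the family of contractions to limit onto the contraction of $W^s_\eps(x)$ to $x$ in a way compatible with the topology of $N_x$. This is exactly where Theorem~\ref{thm:inv-fol}c)'s uniform exponential estimate~(\ref{eq:unif-exp-dist}) is essential, and where the degree-of-smoothness subtleties of the backward $\lambda$-Lemma~\cite{weber:2014a} enter — the foliation is only $C^1$ (or whatever regularity is stated there), so the contraction should be built using the foliation coordinates rather than any smooth structure one might naively hope for, and continuity (not smoothness) of $H$ is all that is claimed and all that is needed. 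A secondary technical point is to arrange that $H$ is stationary on $N_x^u$ for all time, which requires choosing the per-leaf contraction to fix the leaf's center; this is automatic for radial contractions but must be stated.
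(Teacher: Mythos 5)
Your high‑level plan — contract each leaf of the foliation onto its centre in $N_x^u$, and observe that $L_x$ is a union of whole leaves (those with $T\in(\tau,2\tau]$) so that the contraction respects the pair — matches the paper in outline, and your identification of $N^u_x=\varphi_{-\tau}W^u_\eps(x)$ as an open $k$‑disk and $L^u_x$ as an annulus is correct. But the proposal misses what the paper explicitly flags as the actual difficulty, and in doing so leaves a genuine gap.

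The gap is the claim that a ``radial contraction in the normal bundle coordinates of $\Dd(x)$, transported by $\varphi_T^{-1}$'' contracts \emph{within} each leaf $N_x(\gamma_T)$. The leaf is \emph{not} the full graph disk $\Gg^T_\gamma(\Bb^+)$; it is only the open sub‑piece $\Gg^T_\gamma(\Bb^+)\cap\{\Ss_\Vv<c+\eps\}$, whose topological boundary sits on the action level $c+\eps$. A radial contraction in fibre coordinates (or any $\pi_+$‑coordinate contraction) has no a priori compatibility with the action $\Ss_\Vv$: the image of $\Oo^+_T:=\pi_+N_x(\gamma_T)$ under $\Gg^T_\gamma$ is not known to be star‑shaped, and pulling back a radial homotopy of $\Dd_\gamma(x)$ through $\varphi_T$ does not monotonically decrease action along the homotopy path. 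So a point near $\p N_x(\gamma_T)$ could be pushed above level $c+\eps$, i.e.\ out of $N_x$, before returning — exactly the failure the paper warns about (``So why is there a long calculation? Because we need to make sure the deformation takes place \emph{inside} $N_x$''). You instead identify continuity across the central leaf as the main obstacle, but the paper treats that as comparatively routine (it follows from the uniform $C^1$ estimate of the backward $\lambda$‑Lemma); the real work is elsewhere.

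What the paper does is construct a specific \emph{induced semi‑flow} $\theta_s z=\Gg^T_\gamma\,\pi_+\,\phi_s\,\Gg^\infty\,\pi_+ z$: project $z$ to $X^+$, lift to the stable manifold via $\Gg^\infty$, run the genuine heat flow $\phi_s$ there, project back and re‑lift to the leaf via $\Gg^T_\gamma$. Restricted to the central leaf this is $\phi_s$ itself, and by construction it preserves the leaf structure of $F$. The bulk of the proof is then a gradient estimate showing that $s\mapsto\Ss(\theta_s z)$ is \emph{strictly decreasing} whenever $z$ sits on the boundary $\{\Ss=c+\eps\}$ of a leaf — combining the Lipschitz estimate for the nonlinearity, the $L^2$‑extension of the linearized graph maps $d\Gg^T_\gamma$ and $d\Gg^\infty$, the exponential closeness $\|\Gg^T_\gamma-\Gg^\infty\|$, and a Palais–Smale lower bound $\alpha>0$ on $\|\grad\Ss\|_2$ away from $x$. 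This ``inward pointing along $\p^+N$'' estimate is precisely what guarantees that $\theta_s$ preserves each leaf and hence $N_x$, and it is the ingredient your proposal would need but does not supply.
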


\begin{corollary}\label{cor:conley-index-N-L}
Given a Conley pair $(N_x,L_x)$
as in Theorem~\ref{thm:Conley-pair}, then
\begin{equation}\label{eq:N-L}
   \Ho_\ell(N_x,L_x)\cong
   \begin{cases}
       \Z&,\ell=\IND_\Vv(x),
       \\
       0&,\text{otherwise.}
   \end{cases}
\end{equation}
\end{corollary}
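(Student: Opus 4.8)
The plan is to deduce Corollary~\ref{cor:conley-index-N-L} directly from Theorem~\ref{thm:deformation-retract} together with homotopy invariance of relative singular homology and a one-line exact-sequence computation for the model pair. First I would invoke Theorem~\ref{thm:deformation-retract}: the pair $(N_x,L_x)$ strongly deformation retracts onto $(N_x^u,L_x^u)$, so the inclusion $(N_x^u,L_x^u)\hookrightarrow(N_x,L_x)$ is a homotopy equivalence of pairs (with homotopy inverse the time-one map of the retraction, which carries $L_x$ into $L_x^u$), and hence induces isomorphisms $\Ho_\ell(N_x^u,L_x^u)\cong\Ho_\ell(N_x,L_x)$ for every $\ell$; alternatively one applies the five lemma to the induced morphism of the two long exact homology sequences. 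It then remains to compute $\Ho_\ell(N_x^u,L_x^u)$ from the explicit description furnished by Theorem~\ref{thm:deformation-retract}.

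Set $k:=\IND_\Vv(x)$. For $k\ge 1$ that theorem says $N_x^u$ is an open $k$-disk, hence contractible, and $L_x^u$ is the half-open annulus obtained from it by deleting a smaller concentric open disk; a radial rescaling deformation retracts $L_x^u$ onto its inner bounding sphere, so $L_x^u\simeq\SS^{k-1}$ (for $k=1$ this is the two-point space $\SS^0$). I would then feed $\widetilde{\Ho}_*(N_x^u)=0$ into the long exact sequence of the pair in reduced homology, obtaining for every $\ell$
$$
     0=\widetilde{\Ho}_\ell(N_x^u)\TO\Ho_\ell(N_x^u,L_x^u)
     \TO\widetilde{\Ho}_{\ell-1}(L_x^u)\TO\widetilde{\Ho}_{\ell-1}(N_x^u)=0 ,
$$
so the middle arrow is an isomorphism and $\Ho_\ell(N_x^u,L_x^u)\cong\widetilde{\Ho}_{\ell-1}(\SS^{k-1})$, which is $\Z$ exactly for $\ell=k$ and vanishes otherwise. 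The remaining case $k=0$ is handled separately: then $W^u(x)=\{x\}$, so $N_x^u=\{x\}$, and $x\notin L_x$ because $\varphi_{2\tau}x=x$ with $\Ss_\Vv(x)=c>c-\eps$, whence $L_x^u=\emptyset$ and $\Ho_*(N_x^u,L_x^u)=\Ho_*(\mathrm{pt})$, again matching~(\ref{eq:N-L}).

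I do not expect a genuine obstacle at this stage: the entire weight of the statement sits in Theorem~\ref{thm:deformation-retract}, which produces the deformation retraction and identifies the target as an open disk together with a concentric annulus. The only points deserving a moment's care are the degenerate index $k=0$ (empty exit set) and, for $k=1$, the disconnectedness of $\SS^0$ — both dealt with cleanly by passing to reduced homology as above — together with the routine observation that a strong deformation retraction of pairs indeed induces isomorphisms on relative homology.
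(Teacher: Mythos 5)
Your proof is correct and does establish~(\ref{eq:N-L}); it is essentially equivalent to the paper's argument but packaged differently. Both proofs hinge on Theorem~\ref{thm:deformation-retract}. You then reduce $(N_x,L_x)$ to $(N_x^u,L_x^u)$, retract $L_x^u$ onto its inner bounding sphere, and read off the groups from the long exact sequence of the pair in reduced homology, with the degenerate cases $k=0,1$ handled by passing to reduced homology. The paper instead goes through Theorem~\ref{thm:Conley-index}, whose isomorphism~(\ref{eq:nat-iso}) is built from the diffeomorphism $\vartheta^x:(\D^k,\SS^{k-1})\to(D^u_x,S^u_x)$ followed by inclusion, and shows $\iota_*$ is an isomorphism by pre- and post-composing with two strong deformation retractions as in~(\ref{eq:idea-incl}). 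The trade-off is this: your route is a more elementary calculation that produces only the abstract isomorphism class of $\Ho_\ell(N_x,L_x)$, whereas the paper's route produces an \emph{explicit} isomorphism together with the preferred generator $[D^u_x]\in\Ho_k(N_x,L_x)$, which is indispensable later for defining the chain isomorphism $\Theta_k$ in~(\ref{eq:Theta-k}) and for the orientation bookkeeping in Theorem~\ref{thm:Morse=triple}. For the bare statement of Corollary~\ref{cor:conley-index-N-L} your argument is perfectly adequate. One tiny slip: the homotopy inverse is the time-$\infty$ map $\theta_\infty$ of the deformation~(\ref{eq:str-def-retract}) (with $[0,\infty]$ compactified), not the time-one map; this does not affect the argument.
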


\begin{proof}
Isomorphism~(\ref{eq:nat-iso}).
\end{proof}

The task to prove~(\ref{eq:N-L}) triggered the
discovery of the Backward
$\lambda$-Lemma in~\cite{weber:2014b}.
  Luckily it was afterwards that we have been
  informed by Kell~\cite{kell:2012a}
  that~(\ref{eq:N-L}) should follow from
  Rybakowski's theory~\cite{rybakowski:1987a}.
The $\lambda$-Lemma,
therefore Theorem~\ref{thm:inv-fol},
both highly depend on finiteness of the Morse
index. Furthermore, it is the proof of
Theorem~\ref{thm:deformation-retract} in
section~\ref{sec:deformation-Conley-pair} which
requires the extension of the linearized graph
maps in the Backward
$\lambda$-Lemma~\cite{weber:2014b}
from $W^{1,2}$ to $L^2$; see
Remark~\ref{rem:changes}
and~\cite[Rmk.~1]{weber:2013b}.


\subsection{Past and future}
\label{sec:outlook}

The Morse complex goes back to the work of
Thom~\cite{thom:1949a},
Smale~\cite{smale:1960a,smale:1961a}, and
Milnor~\cite{milnor:1965a} in the 40's, 50's and 60's,
respectively.
The geometric formulation in terms of
flow trajectories was re-discovered
by Witten in his influential 1982
paper~\cite{witten:1982a}.
He studied a supersymmetric
quantum mechanical system
related to the Laplacian
$\Delta_s=d_sd_s^*+d_s^*d_s$
which involves the deformed
Hodge differential $d_s=e^{-sf} d e^{sf}$
acting on differential forms.
Here $f:M\to\R$ denotes a Morse function 
on a closed Riemannian manifold $M$ and
$s\ge 0$ is a real parameter.
The Morse complex
arises as the adiabatic limit of the
quantum mechanical system,
as the parameter $s$ tends to infinity.
In the early 90's the details of
the construction
have been worked out, among others,
by Po\'{z}niak~\cite{po-zniak:1991a},
by Schwarz~\cite{schwarz:1993a}
who developed the functional analytic framework,
and by the author~\cite{weber:1993a-link}
who developed the dynamical systems framework.
In the past decade Abbondandolo and
Majer~\cite{abbondandolo:2006a}
extended the Morse complex to flows on
Banach manifolds.

Morse homology for semi-flows
was constructed only recently
in~\cite{weber:2010a-link,weber:2013b}
where the functional analytic (moduli space)
framework has been worked out for the heat flow.
Being based on Sard's theorem,
the theory could be trivial.
The present paper develops the dynamical systems
framework and, above all, establishes non-triviality
of the theory by calculating it in terms of
singular homology.

Key tools are the invariant stable foliations
provided by Theorem~\ref{thm:inv-fol}
which are of independent interest.
For instance, the (non obvious) \emph{global}
stable manifold theorem for \emph{forward}
semi-flows will be a corollary of the main result
of our forthcoming paper~\cite{weber:2014d}
whose base is Theorem~\ref{thm:inv-fol}
together with the pre-image idea
-- in a different guise though --
which founded~\cite{weber:2014a} and
the present text.

An extremely rich source of semi-flows
is obviously geometric analysis. For instance,
although the present theory only deals
with harmonic spheres of dimension one,
it could be a first step in one of various
possible directions.

Returning to present time, consider
the finite dimensional case in which
there is, of course, no need to consider
semi-flow Morse homology. But there are
(too) many choices which one can take
while constructing the Morse complex.
For instance, should one orient stable or
unstable manifolds? Or even $M$ itself?
Should we use the forward or the backward flow?
The heat flow eliminates these questions
alltogether
-- only the unstable manifolds
are of finite dimension and there is no
backward flow in general.
We saw above that one even gets away with
embedded ascending disks $W^s_\eps(x)$,
no manifold structure needed on all of $W^s(x)$.
Furthermore, our construction
of the natural isomorphism to singular homology
applies correspondingly and is new in finite dimensions.

Finite Morse index is one of the most heavily used
ingredients in this paper. Already the Backward
$\lambda$-Lemma~\cite{weber:2014a}
hinges on it via well posedness of the mixed
Cauchy problem. So does existence of the
backward flow on unstable manifolds.
That the action is bounded below and
satisfies the Palais-Smale condition is
also used frequently. The Abbondandolo-Majer
extension of Morse-Smale to
neighborhoods~\cite[Lemma~2.5]{abbondandolo:2006a}
carries over to the present setup and is quite
useful. Remarkably, in the very last step
of our construction suddenly the need for a
\emph{forward} $\lambda$-Lemma
arises; see Figure~\ref{fig:fig-forward-lambda}.

\section{Conley pairs and stable foliations}
\label{sec:stab-fol}
In section~\ref{sec:stab-fol} we study the heat
flow locally near a given nondegenerate critical
point $x$ of $\Ss_\Vv$ of Morse index $k$.
The perturbation $\Vv$ is only required to
satisfy axioms~{\rm (V0)--(V3)}
in the notation of~\cite{weber:2013b}.
Throughout section~\ref{sec:stab-fol}
we use heavily results and notation
of~\cite{weber:2014a}. The reader
may wish to have a copy at hand.

\begin{remark}[Backward flow on unstable manifold]
\label{rmk:diffeo-unstable-NEW}
The unstable manifold $W^u(x)$ carries a
{\bf\boldmath backward flow} $\varphi_{-s}$.
Thus the time-$s$-map $\varphi_s$
restricted to the unstable manifold
is a diffeomorphism of $W^u(x)$ and its inverse
is given by $\varphi_{-s}$.
To see this recall that by definition,
see e.g.~\cite[\S 6.1]{weber:2013a},
each element $\gamma$ of $W^u(x)$
is of the form $u_\gamma(0,\cdot)$
where $u_\gamma:(-\infty,0]\times S^1\to M$
solves the heat equation~(\ref{eq:heat})
and $u_\gamma(s,\cdot)$
converges to $x$, as $s\to-\infty$.
Given $s>0$, obviously
$\varphi_{-s}\gamma:=u_\gamma(-s,\cdot)$ lies in
the pre-image ${\varphi_s}^{-1}(\gamma)$ which
contains no other element by backward unique
continuation~\cite[Thm.~17]{weber:2013a}.
\end{remark}

\subsubsection*{Outline}
In section~\ref{sec:iso-block} we define an open
subset $N_c=N_c^{\eps,\tau}\subset\Lambda M$
associated to a critical value $c$ of the action
and reals $\eps,\tau>0$.
If the action of $x$ is $c$,
then $N_x=N_x^{\eps,\tau}$ is the path
connected component of $N_c^{\eps,\tau}$ that
contains $x$.
Lemma~\ref{le:N_x} asserts that $N_x$
intersects the stable manifold $W^s(x)$
in the ascending disk $W^s_\eps(x)$ and
the descending disk $W^u_\eps(x)$
in the $k$-disk $\varphi_{-\tau} W^u_\eps(x)$.
The inclusions~(\ref{eq:N-inclusions}) suggest
that $N_x$ contracts onto $x$,
as $\eps\to 0$ and
$\tau\to\infty$.
Thus by nondegeneracy of $x$ the closure of
$N_x$ contains no critical point except $x$
whenever $\eps>0$ is sufficiently small
and $\tau>0$ is sufficiently large.
Inspired by Conley~\cite{conley:1978a}
such $N_x$ is called an
isolating block for $x$.

Section~\ref{sec:stab-fol-level-sets} shows
that an isolating block $N_x$ is foliated
by disks diffeomorphic to the ascending disk
$W^s_\eps(x)$ via the graph maps $\Gg^T_\gamma$
and $\Gg^\infty$ provided by the Backward 
$\lambda$-Lemma~{\cite[Thm.~1]{weber:2014a}}
and the Local Stable Manifold
Theorem~{\cite[Thm.~3]{weber:2014a}}.
More precisely, the leaves of the foliation are
parametrized by the elements of the $k$-disk
$\varphi_{-\tau} W^u_\eps(x)$. In particular, the leaf
over its center $x$ 
is the ascending disk $W^s_\eps(x)$.
Furthermore, the heat flow $\varphi_s$ maps
leaves to leaves and the isolating block
$N_x$ contracts onto $W^s_\eps(x)$,
as $\tau\to\infty$.

In section~\ref{sec:deformation-Conley-pair}
we extend the heat flow on the ascending disk
$W^s_\eps(x)$ artificially to the other leaves
of the isolating block $N_x$ using
the diffeomorphisms mentioned in the former
paragraph. This way we prove that the part $N_x^u$
of $N_x$ in the unstable manifold
is a strong deformation retract of $N_x$.
This seems obvious. So why is there a long
calculation? Because we need to make sure that
the deformation takes place \emph{inside} $N_x$
and the dimension of each leaf is infinite.

In section~\ref{sec:Conley-pairs} we introduce
the notion of an exit set $L_x=L_x^{\eps,\tau}$
associated to an isolating block $N_x=N_x^{\eps,\tau}$.
The pair $(N_x,L_x)$ is called a Conley pair and
we state and prove key properties that will be used
in section~\ref{sec:filtration-iso}. In particular
we show that the homology of the pair $(N_x,L_x)$
coincides with the homology of the pair
$(\D^k,\SS^{k-1})$ where $k$ is the Morse index
of $x$ and $\SS^{k-1}$ denotes the boundary
of the closed unit disk $\D^k\subset\R^k$.

\subsubsection*{Local coordinate setup and choices}

\begin{hypothesis}\label{hyp:local-setup-N}
Fix a perturbation $\Vv$ that satisfies
the axioms~{\rm (V0)--(V3)}
in~\cite{weber:2013b} and a nondegenerate
critical point $x$ of $\Ss_\Vv$ of Morse index $k$
and action~$c$.
\begin{enumerate}
\item[(H1)]
We use the local setup of~\cite{weber:2014a},
see Figure~\ref{fig:fig-loc-stab-mf-thm}.
Fix a local parametrization
$$
     \Phi:\exp_x: X\supset\Uu\supset\Bb_{\rho_0} \to\Lambda M,
     \qquad X=T_x\Lambda M=W^{1,2}(S^1,x^*TM),
$$
of a neighborhood of $x$ in $\Lambda M$
and consider the orthogonal splitting
$$
     X=T_x W^u(x)\oplus T_x W^s_\eps(x)
     =X^-\oplus X^+
$$
with corresponding orthogonal
projections $\pi_\pm$.
By a standard argument we
assume that $\Uu$ is of the form
$W^u\times\Oo^+$ where $W^u\subset X^-$
represents the unstable manifold near $x$
and $\Oo^+\subset X^+$ is an open
ball about $0$. The constant $\rho_0>0$
is provided by~\cite[Hyp.~1]{weber:2014a}
and $\Bb_{\rho_0}$ denotes the closed radius
$\rho_0$ ball in $X$ centered at the origin.

By $\phi$ we denote the local
semi-flow on $\Uu$ which represents
the heat flow with respect to
$\Phi$; see~\cite[(5)]{weber:2014a}.
In these coordinates $0\in X$ represents $x$
and $\Ss:=\Ss_\Vv\circ\Phi^{-1}$ the action functional.
In general, our coordinate notation will be the
global notation with $x$ omitted, for example
$W^s_\eps$ abbreviates $\Phi^{-1} W^s_\eps(x)$.
\item[(H2)]
   Due to nondegeneracy of the critical point $x$
   we assume that the radius $\rho_0>0$
   has been chosen sufficiently small such that
   the coordinate patch $\Phi(\Bb_{\rho_0})$ about $x$
   contains no other critical points.
\item[(H3)]
  Fix a constant $\mu>0$ sufficiently small
  such that the ascending disk $W^s_{2\mu}(x)$
  defined by~(\ref{eq:asc-disk}) and the
  descending disk $W^u_{2\mu}(x)$
  defined by~(\ref{eq:Desc-sphere})
  are contained in the coordinate
  patch $\Phi(\Bb_{\rho_0})$
  and such that their closures are diffeomorphic
  to the closed unit disks in $\R^k$ and $X^+$,
  respectively; cf.
  Lemma~\ref{le:descending-disk}
  and Lemma~\ref{le:asc-disk}.
\item[(H4)]
  The following are the hypotheses of 
  Theorem~\ref{thm:inv-fol} which allow
  to apply the Backward 
  $\lambda$-Lemma~\cite[Thm.~1]{weber:2014a}.
  Fix an element $\lambda\in(0,d)$ in the {\bf spectral
  gap}\footnote{
  distance $d$ between zero and the spectrum of
  the Jacobi operator $A_x$ associated to $x$
  }
  of the Jacobi operator $A_x$ associated to $x$.
  Pick $\eps\in(0,\mu]$
  where $\mu$ is the constant in~(H3).
  Choose $r=r(\eps)>0$ sufficiently small such
  that the tubular neighborhood $\Dd(x)$ associated
  to the radius $r$ normal disk bundle of the
  descending sphere $S^u_\eps(x)$ in the level
  hypersurface $\{\Ss_\Vv=c-\eps\}$ of the Hilbert
  manifold $\Lambda M$ exists and is contained
  in the coordinate patch $\Phi(\Bb_{\rho_0})$.
  Denote the fiber over $\gamma\in S^u_\eps(x)$
  by $\Dd_\gamma(x)$; see
  Figures~\ref{fig:fig-Conley-pair}
  or, in coordinates, 
  Figure~\ref {fig:fig-flowinv-foliation}.
  Then there is a constant
  $\tau_0=\tau_0(\eps,r,\lambda)>0$ such that
  the assertions of Theorem~\ref{thm:inv-fol}
  hold true whenever $\tau>\tau_0$.
\end{enumerate}
\end{hypothesis}

\subsection{Isolating blocks}
\label{sec:iso-block}
As some results in this section do not require
nondegeneracy we use the notation $y$
for arbitrary critical points of $\Ss_\Vv$.
In contrast $x$ always denotes the nondegenerate
critical point that has been fixed at the very
beginning of section~\ref{sec:stab-fol}.

\begin{definition}\label{def:isol-block}
Assume $\eps>0$ and $\tau>0$ are constants.
\begin{enumerate}
\item[\rm (a)]
  Given a critical value $c$ of
  the action functional $\Ss_\Vv$
  consider the set\,\footnote{We borrow
     definition~(\ref{eq:N_c})
     from the finite dimensional
     situation~\cite[p.~119]{salamon:1990a}.
  }
  \begin{equation}\label{eq:N_c}
  \begin{split}
     N_c=N_c^{\eps,\tau}
   :&=
     \left\{\gamma\in\Lambda M\mid
     \text{$\Ss_\Vv(\gamma)<c+\eps$,
     $\Ss_\Vv(\varphi_\tau\gamma)>c-\eps$}
     \right\}
   \\
    &=\{\Ss_\Vv<c+\eps\}\cap
     \varphi^{-1}_{(\tau,\infty]}\{\Ss_\Vv=c-\eps\}
  \end{split}
  \end{equation}
  where by definition
  $\varphi^{-1}_\infty\{\Ss_\Vv=c-\eps\}$
  denotes those points of $\Lambda M$
  above action level $c-\eps$
  which never reach that
  level.\,\footnote{If $\Ss_\Vv$ is Morse below level
     $c+\eps$
     then $N_c^{\eps,\tau}=\cup_y W^s_\eps(y)$
     where the union is over all critical
     points $y$ whose action lies in the interval
     $(c-\eps,c+\eps)$.
     (In this case there are no limit cycles.)
    }

\item[\rm (b)]
  Suppose $y$ is a critical point of action
  $c=\Ss_\Vv(y)$. By $N_y=N_y^{\eps,\tau}$ we
  denote the path connected component of
  $N_c^{\eps,\tau}$ that contains $y$;
  compare~(\ref{eq:Conley-set-NEW}).

\item[\rm (c)]
  Suppose $x$ is a nondegenerate critical point
  and there are no other critical points in the
  closure of $N_x^{\eps,\tau}$. Then $N_x^{\eps,\tau}$
  is called  an {\bf isolating block}.
\end{enumerate}
\end{definition}
Figure~\ref{fig:fig-N-components}
shows a set $N_c$ that consists of three path
connected components
one of which is an isolating block.

\begin{figure}
  \centering
  \includegraphics{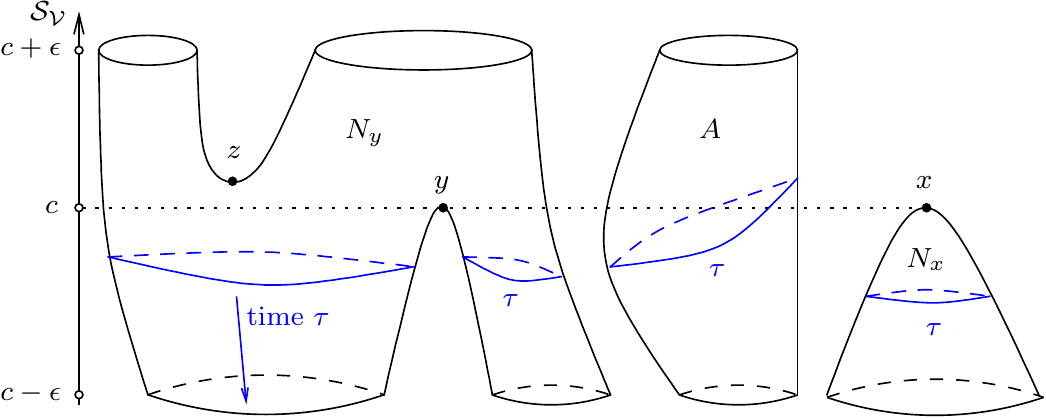}
  \caption{A set $N_c$
                 with three path connected components
                 $N_y$, $A$, $N_x$
                 }
  \label{fig:fig-N-components}
\end{figure}

\begin{lemma}\label{le:Conley-set}
The set $N_c^{\eps,\tau}$ defined by~(\ref{eq:N_c})
is an open subset of
$\Lambda^{c+\eps} M$ and contains all
critical points with action values in the interval
$(c-\eps,c+\eps)$.
\end{lemma}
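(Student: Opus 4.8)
The plan is to verify the two assertions of Lemma~\ref{le:Conley-set} separately, openness and the inclusion of critical points, both by direct appeal to the description of $N_c^{\eps,\tau}$ as an intersection of two sets, namely the open sublevel set $\{\Ss_\Vv<c+\eps\}$ and the pre-image $\varphi^{-1}_{(\tau,\infty]}\{\Ss_\Vv=c-\eps\}$.

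For openness, the first factor $\{\Ss_\Vv<c+\eps\}$ is open in $\Lambda M$ because $\Ss_\Vv$ is continuous; since it is contained in the sublevel set $\Lambda^{c+\eps}M$, it suffices to show the second factor is open in $\Lambda^{c+\eps}M$ (or in $\Lambda M$). Here I would argue directly from the defining inequality $\Ss_\Vv(\varphi_\tau\gamma)>c-\eps$. The time-$\tau$-map $\varphi_\tau$ is continuous on $\Lambda M$ (it is even $C^1$; see the semi-flow property recalled in section~\ref{sec:intro}), so $\gamma\mapsto\Ss_\Vv(\varphi_\tau\gamma)$ is continuous, and hence the set where this composition is strictly greater than $c-\eps$ is open. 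The only subtlety is the bookkeeping with the ``time $\infty$'' points: a point $\gamma$ with $\Ss_\Vv(\gamma)<c+\eps$ belongs to $N_c^{\eps,\tau}$ precisely when $\Ss_\Vv(\varphi_\tau\gamma)>c-\eps$, and this single strict inequality already captures both the points that reach level $c-\eps$ only after time $\tau$ and the points that never reach it (for which $\Ss_\Vv(\varphi_s\gamma)$ stays above $c-\eps$ for all $s$, in particular at $s=\tau$). So no separate treatment of $\varphi^{-1}_\infty$ is needed: the two displayed formulas for $N_c$ agree, and openness follows from continuity of $\Ss_\Vv\circ\varphi_\tau$ alone.

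For the second assertion, let $y$ be a critical point with $c-\eps<\Ss_\Vv(y)<c+\eps$. Then $\varphi_s y=y$ for all $s\ge 0$ by the gradient nature of the heat flow, so in particular $\varphi_\tau y=y$; hence $\Ss_\Vv(\varphi_\tau y)=\Ss_\Vv(y)>c-\eps$ and $\Ss_\Vv(y)<c+\eps$, so $y\in N_c^{\eps,\tau}$ by definition~(\ref{eq:N_c}). This step is elementary and I do not expect any obstacle.

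The only place requiring care is the identification of the two expressions for $N_c^{\eps,\tau}$ together with the interpretation of $\varphi^{-1}_{(\tau,\infty]}$, i.e. making precise that ``$\Ss_\Vv(\varphi_\tau\gamma)>c-\eps$'' is equivalent to ``$\gamma$ reaches level $c-\eps$ only at some time strictly greater than $\tau$, or never''. This uses that along a non-constant heat flow trajectory the action $s\mapsto\Ss_\Vv(\varphi_s\gamma)$ is non-increasing (strictly, off critical points), which is exactly the gradient-flow property; so once $\Ss_\Vv(\varphi_\tau\gamma)>c-\eps$, the trajectory has not yet hit level $c-\eps$ by time $\tau$. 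I would state this equivalence in one sentence and then conclude. Thus the lemma reduces to continuity of $\Ss_\Vv$ and of $\varphi_\tau$, plus the invariance of critical points under the flow — no genuine difficulty, the ``hard part'' being merely the unwinding of the notation $\varphi^{-1}_{(\tau,\infty]}$.
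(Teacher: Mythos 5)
Your argument is correct and is essentially the proof the paper gives: openness follows from continuity of $\Ss_\Vv\circ\varphi_\tau$ (the paper appeals to Lipschitz continuity of $\varphi_\tau$ on sublevel sets, while you invoke the $C^1$/continuity of the semi-flow recalled in the introduction — either suffices here), and the inclusion of critical points follows from the fact that they are fixed points of $\varphi_s$. The extra discussion reconciling the two displayed expressions for $N_c^{\eps,\tau}$ is a reasonable aside but is not needed for the lemma, since both you and the paper work directly from the inequality characterization.
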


\begin{proof}
Openness is due to continuity of the action
functional $\Ss_\Vv$ and Lipschitz continuity
of the time-$s$-map $\varphi_s$ when restricted to
sublevel sets. The latter follows from a mild extension
of~\cite[Thm.~9.1.5]{weber:2010a-link};
see~\cite{weber:heat-book}.
The second assertion is true since
critical points of $\Ss_\Vv$ and fixed points
of $\varphi_s$ coincide.
\end{proof}

\begin{lemma}[Descending disks]
\label{le:descending-disk}
Given a nondegenerate critical point
$x$ of $\Ss_\Vv$, there is a constant 
$\eps_0>0$ such that the following is true.
For each $\eps\in(0,\eps_0]$ the closure
of the descending disk $W^u_\eps(x)$ defined
by~(\ref{eq:Desc-sphere}) is
diffeomorphic to the closed unit disk in $\R^k$ where 
$k$ is the Morse index of $x$.
Furthermore, any open neighborhood $U$ of $x$
in the unstable manifold $W^u(x)$
contains the closure of some
descending disk $W^u_\eps(x)$.
\end{lemma}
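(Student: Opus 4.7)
The plan is to reduce both assertions to the classical finite-dimensional Morse Lemma applied on the manifold $W^u(x)$ itself, and then to pass from the local picture to the global set $W^u_\eps(x)$ by means of the backward flow on the unstable manifold supplied by Remark~\ref{rmk:diffeo-unstable-NEW}.

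First I would note that $\Ss_\Vv|_{W^u(x)}$ is a smooth function on the smooth $k$-dimensional manifold $W^u(x)$ whose Hessian at $x$, being the restriction to $T_x W^u(x) = E_x$ of the Hessian of $\Ss_\Vv$ at $x$, is negative definite of rank $k$. The classical Morse Lemma in finite dimensions then provides a radius $r_0 > 0$ and a smooth chart $\psi : \{v \in \R^k : |v| \le r_0\} \to \overline{U_0} \subset W^u(x)$ with $\psi(0) = x$ and $\Ss_\Vv \circ \psi(v) = c - |v|^2$. Consequently, for every $\eps \le r_0^2$ the set $\overline{U_0} \cap \{\Ss_\Vv \ge c - \eps\}$ equals $\psi(\{|v|^2 \le \eps\})$, diffeomorphic to the closed unit disk of $\R^k$ via the scaling $v \mapsto v/\sqrt{\eps}$.

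The main step is to show that $W^u_\eps(x) \subset U_0$ whenever $\eps \le \eps_0 := r_0^2$, so that $W^u_\eps(x)$ coincides with its local Morse description. Suppose for contradiction that some $\gamma \in W^u_\eps(x)$ lies outside $\overline{U_0}$. By Remark~\ref{rmk:diffeo-unstable-NEW} the backward trajectory $s \mapsto \varphi_{-s}\gamma$ is defined for all $s \ge 0$ and converges to $x \in U_0$, so by continuity it must hit the boundary $\partial U_0 = \psi(\{|v| = r_0\})$ at some first time $s_* > 0$. On this boundary $\Ss_\Vv$ takes the value $c - r_0^2$, whereas the strict decrease of $\Ss_\Vv$ along non-trivial forward orbits implies that the backward flow strictly increases the action, giving $\Ss_\Vv(\varphi_{-s_*}\gamma) \ge \Ss_\Vv(\gamma) > c - \eps \ge c - r_0^2$, a contradiction. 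Hence $W^u_\eps(x) = \psi(\{|v|^2 < \eps\})$, whose closure is the closed $k$-disk $\psi(\{|v|^2 \le \eps\})$. The second assertion now follows immediately: given an open neighborhood $U \ni x$ in $W^u(x)$, the preimage $\psi^{-1}(U \cap U_0)$ contains $\{|v|^2 \le \delta\}$ for some $\delta > 0$, so $\overline{W^u_\eps(x)} \subset U$ whenever $\eps \le \min(\delta, \eps_0)$.

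The hard part is the globalization in the second step: a priori $W^u_\eps(x)$ could contain a component far from $x$ whose action values happen to lie just below $c$. What rules this possibility out is precisely the combination of finite Morse index, which yields the global backward flow on $W^u(x)$, together with the strict monotonicity of $\Ss_\Vv$ along non-constant heat flow trajectories; together they force every high-action point of $W^u(x)$ to emerge backward from $x$ through the Morse chart, which is exactly the content of the contradiction above.
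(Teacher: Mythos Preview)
Your proof is correct and follows the same route the paper indicates with its one-line citation ``Unstable Manifold Theorem~\cite[Thm.~18]{weber:2013b} and Morse-Lemma~\cite{milnor:1963a}'': restrict $\Ss_\Vv$ to the finite-dimensional smooth manifold $W^u(x)$, apply the classical Morse Lemma at the nondegenerate maximum $x$, and then observe that the superlevel set $W^u_\eps(x)$ stays inside the Morse chart. The paper leaves that last globalization step implicit, and your backward-flow argument via Remark~\ref{rmk:diffeo-unstable-NEW} is exactly the right way to make it precise.
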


\begin{proof}
Unstable Manifold 
Theorem~\cite[Thm.~18]{weber:2013b}
and Morse-Lemma~\cite{milnor:1963a}.
\end{proof}

\begin{lemma}\label{le:N_x}
Assume $N_y^{\eps,\tau}$ is given by
Definition~\ref{def:isol-block}~(b), then
\begin{equation}\label{eq:N-inclusions}
\begin{split}
     \delta<\eps\;\;
     \Rightarrow\;\;
     N_y^{\delta,\tau}\subset N_y^{\eps,\tau},
     \qquad
     T>\tau\;\;
     \Rightarrow\;\;
     N_y^{\eps,T}\subset N_y^{\eps,\tau}.
\end{split}
\end{equation}
Assume $x$ is a nondegenerate critical point
of $\Ss_\Vv$, then
\begin{equation}\label{eq:N-cap-W}
\begin{split}
     N_x^{\eps,\tau}\cap W^s(x)
    &=W^s_\eps(x),
   \\
     N_x^{\eps,\tau}\cap W^u(x)
    &=\varphi_{-\tau} W^u_\eps(x)\\
    &=\{ x\}\cup
       \bigcup_{T>\tau}\varphi_{-\tau} S^u_\eps(x).
\end{split}
\end{equation}
for every $\eps\in(0,\eps_0]$ where $\eps_0$
is given by the descending disk
Lemma~\ref{le:descending-disk}.
\end{lemma}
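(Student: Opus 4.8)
The two monotonicity inclusions in~(\ref{eq:N-inclusions}) are purely set-theoretic. For $N_y^{\delta,\tau}\subset N_y^{\eps,\tau}$ when $\delta<\eps$: a point $\gamma$ in $N_c^{\delta,\tau}$ satisfies $\Ss_\Vv(\gamma)<c+\delta<c+\eps$ and $\Ss_\Vv(\varphi_\tau\gamma)>c-\delta>c-\eps$, hence lies in $N_c^{\eps,\tau}$; since $N_c^{\delta,\tau}\subset N_c^{\eps,\tau}$ and the former is path connected and contains $y$, it sits inside the path component $N_y^{\eps,\tau}$. For $N_y^{\eps,T}\subset N_y^{\eps,\tau}$ when $T>\tau$: here the key is the gradient (hence monotone) nature of the heat flow, namely $s\mapsto\Ss_\Vv(\varphi_s\gamma)$ is non-increasing. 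So if $\Ss_\Vv(\varphi_T\gamma)>c-\eps$ then also $\Ss_\Vv(\varphi_\tau\gamma)\ge\Ss_\Vv(\varphi_T\gamma)>c-\eps$, giving $N_c^{\eps,T}\subset N_c^{\eps,\tau}$; again pass to path components containing $y$.

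The substantive part is~(\ref{eq:N-cap-W}), and I would work in the local coordinates of Hypothesis~\ref{hyp:local-setup-N}, so $x=0$, $X=X^-\oplus X^+$, and the action is $\Ss$. For the stable intersection $N_x^{\eps,\tau}\cap W^s(x)=W^s_\eps(x)$: if $\gamma\in W^s(x)$ then $\varphi_s\gamma\to x$ and $\Ss_\Vv$ decreases along the trajectory to the limit $c=\Ss_\Vv(x)$, so $\Ss_\Vv(\varphi_\tau\gamma)\ge c>c-\eps$ automatically; thus the condition $\Ss_\Vv(\varphi_\tau\gamma)>c-\eps$ is vacuous on $W^s(x)$, and the only constraint left is $\Ss_\Vv(\gamma)<c+\eps$, which is exactly the definition~(\ref{eq:asc-disk}) of $W^s_\eps(x)$. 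One must also check that $W^s_\eps(x)$ is path connected and contains $x$, so that it lies in the right path component $N_x$ rather than spilling into another component — this follows from the local stable manifold theorem, which presents $W^s_\eps(x)$ (for $\eps$ small) as a graph over a ball in $X^+$, hence connected. For the unstable intersection: if $\gamma\in W^u(x)$, then by Remark~\ref{rmk:diffeo-unstable-NEW} the backward flow $\varphi_{-\tau}$ is defined on $W^u(x)$, and $\varphi_\tau$ is a diffeomorphism of $W^u(x)$; so $\gamma\in N_x\cap W^u(x)$ iff $\Ss_\Vv(\gamma)<c+\eps$ (automatic on $W^u(x)$, where the action is $\le c$, shrinking $\rho_0$ via~(H2) if needed to stay in the coordinate patch) and $\Ss_\Vv(\varphi_\tau\gamma)>c-\eps$, i.e. $\varphi_\tau\gamma\in W^u_\eps(x)$, i.e. $\gamma\in\varphi_{-\tau}W^u_\eps(x)$. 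Finally, $W^u_\eps(x)=\{x\}\cup\bigcup_{T>\tau}\varphi_{\tau-T}S^u_\eps(x)$: a nonzero point of $W^u_\eps(x)$ flows forward through strictly decreasing action values, crossing the level $c-\eps$ at a unique time (by the gradient property, each trajectory meets a regular level set exactly once), which I parametrize as $\tau-T$ with $T>\tau$; pulling back by $\varphi_{-\tau}$ gives $\varphi_{-T}S^u_\eps(x)$, and the point $x$ itself accounts for the center of the disk.

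The main obstacle I expect is the path-component bookkeeping: the raw set-level identities I wrote down are for $N_c$, but the lemma is stated for $N_x$, so at each step I have to verify that the intersections $W^s_\eps(x)$ and $\varphi_{-\tau}W^u_\eps(x)$ are connected and contain $x$, and hence are contained in the path component $N_x$ and not merely in $N_c$. For the stable disk this rests on the local stable manifold theorem~\cite[Thm.~3]{weber:2014a} (graph structure over $X^+$, shrinking $\eps\le\eps_0$); for the descending disk it rests on Lemma~\ref{le:descending-disk} (closure diffeomorphic to a closed $k$-disk). The reverse inclusions — that every point of $N_x$ meeting $W^s(x)$ or $W^u(x)$ is captured — are the easy directions, following directly from the definitions once the action-monotonicity along heat trajectories is invoked. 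No infinite-dimensional subtlety enters here beyond what is already packaged in the cited stable/unstable manifold theorems; the foliation structure of Theorem~\ref{thm:inv-fol} is not needed for this lemma.
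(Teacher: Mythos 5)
Your proposal is correct and follows essentially the same route as the paper's proof: verify the raw set-level identities for $N_c$ using monotonicity of the action along the semi-flow, then upgrade to the path component $N_x$. The one genuine difference is in the path-component step: where you invoke the graph structure of the local stable manifold (ascending disk is a graph over a ball, hence connected) and Lemma~\ref{le:descending-disk} for the descending disk, the paper instead uses the forward/backward trajectory $\varphi_{[0,\infty]}\gamma$ joining $\gamma$ to $x$ \emph{inside} $W^s_\eps(x)\subset N_c$ (resp.\ inside $\varphi_{-\tau}W^u_\eps(x)\subset N_c$); the paper's argument is a little more economical since it doesn't invoke any graph or disk structure, only flow invariance and convergence, but both work.

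Two small slips in your write-up, neither affecting the substance: in the $\delta<\eps$ inclusion you say ``$N_c^{\delta,\tau}\subset N_c^{\eps,\tau}$ and the former is path connected'' --- but $N_c^{\delta,\tau}$ need not be path connected (cf.\ Figure~\ref{fig:fig-N-components}); you mean $N_y^{\delta,\tau}$ is path connected and contains $y$. And the remark ``shrinking $\rho_0$ via~(H2) if needed to stay in the coordinate patch'' is superfluous: the bound $\Ss_\Vv\le c$ on $W^u(x)$ is global, following from monotonicity of the action along backward trajectories converging to $x$.
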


\begin{proof}
The first inclusion in~(\ref{eq:N-inclusions})
is trivial and the second one follows from the
fact that the action does not increase along
heat flow trajectories.

Consider the first identity
in~(\ref{eq:N-cap-W}). Since
$W^s_\eps(x):=W^s(x)\cap\{\Ss_\Vv<c+\eps\}$
the inclusion ``$\subset$'' is trivial.
To see ``$\supset$'' note that
$W^s_\eps(x)$ is a subset of $N_c$.
Given $\gamma\in W^s_\eps(x)$ 
the trajectory $\varphi_{[0,\infty]}\gamma$
connects $\gamma$ and $x$ in
$W^s_\eps(x)$, hence in $N_c$.
Thus $\gamma$ lies in the component
of $N_c$ that contains $x$.

Recall that
$W^u_\eps(x):=W^u(x)\cap\{\Ss_\Vv>c-\eps\}$.
By flow invariance of the unstable
manifold
$
     \varphi_{-\tau} W^u_\eps(x)
     =W^u(x)\cap
     \{z\in\Lambda M\mid
     \Ss_\Vv(\varphi_\tau z)>c-\eps\}
     \subset N_c
$.
Now the second identity
in~(\ref{eq:N-cap-W}) follows by a similar
argument as the first identity, just
use backward trajectories.
To see the third identity observe that
any flow trajectory in
$W^s(x)\setminus\{ x\}$ hits
$S^u_\eps(x)$ precisely once.
Obviously $W^u_\eps(x)$ is diffeomorphic
to its image under the diffeomorphism
$\varphi_{-\tau}$ of $W^u(x)$.
On the other hand, it is
diffeomorphic to the open unit disk in $\R^k$
by the descending disk
Lemma~\ref{le:descending-disk}
where $k$ denotes the Morse index of $x$.
\end{proof}

\begin{remark}[Open problem]\label{rem:shrink-N}
The inclusions~(\ref{eq:N-inclusions}) suggest
that one could fit $N_x$ into any given
neighborhood of $x$ by choosing
$\eps>0$ sufficiently small\footnote{
  so the ascending disk
  $W^s_\eps(x)$ contracts
  to $x$ by the Palais-Morse Lemma
}
and $\tau>0$ sufficiently large.\footnote{
  so $N_x^{\eps,\tau}$ contracts to $W^s_\eps(x)$
  by the Backward 
  $\lambda$-Lemma~{\cite[Thm.~1]{weber:2014a}}
}
By Theorem~\ref{thm:inv-fol} part~(d)
this is indeed possible.
Can this also be achieved by shrinking only $\eps$?
\end{remark}

\subsection{Stable foliations associated to
level sets} \label{sec:stab-fol-level-sets}

\subsubsection*{Local non-intrinsic foliation}
Assume~(H1) and~(H2) of
Hypothesis~\ref{hyp:local-setup-N}.
We start with an investigation of the foliation
property provided by the Backward
$\lambda$-Lemma~{\cite[Thm.~1]{weber:2014a}
for a disk family
$\Dd=S^u_\eps\times B_\kappa^+\subset\Bb_{\rho_0}$,
not necessarily related to level sets, but
which still has the no {\bf return property}
with respect to the local flow $\phi$, that is
$$
     \Dd\cap{\phi_s}^{-1}\Dd=\emptyset
$$
for all $s>0$ for which $\phi$ is defined.

\begin{corollary}[to the Backward
$\lambda$-Lemma~{\cite[Thm.~1]{weber:2014a}}]
\label{cor:inv-fol}
Given~(H1) and~(H2), the assumptions
of~\cite[Thm.~1]{weber:2014a},
and the additional assumption that $(\Dd,\phi)$ has
the no return property, then the following is true.
Let $\Gg,\Gg^\infty:\Bb^+\to X$ be the graph maps
provided by Theorems~1 and~3
in~\cite{weber:2014a}, respectively. Then the subset
$$
     F=F^{\eps,T_0}
     :=\left(\im\Gg\cup\im\Gg^\infty\right)
     \subset\Bb_{\rho_0}\subset\Uu
$$
of the Banach space $X$ carries the structure of a
codimension $k$ foliation; see
  Figure~\ref{fig:fig-flowinv-foliation}
  for the part $N$ of $F$ below level $c+\eps$.
The leaves
are given by the subset $F(0):=\Gg^\infty(\Bb^+)$ of the
local stable manifold $W^s(0,\Uu)$,
defined in Lemma~\ref{le:asc-disk},
and by the graphs
$F(\gamma_T):=\Gg^T_\gamma(\Bb^+)$ for all $T>T_0$
and $\gamma\in S^u_\eps$. Leaves and semi-flow are
compatible in the sense that
$$
     z\in F(\gamma_T)
     \quad\Rightarrow\quad
     \phi_\sigma z\in F(\phi_\sigma\gamma_T)
     \qquad,\gamma_T
     :=\phi_{-T}\gamma=\Gg^T_\gamma(0),
$$
whenever the semi-flow trajectory from $z$ to 
$\phi_\sigma z$ remains inside $F$.
\end{corollary}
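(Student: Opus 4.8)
The plan is to deduce the corollary directly from the Backward $\lambda$-Lemma~\cite[Thm.~1]{weber:2014a} and the Local Stable Manifold Theorem~\cite[Thm.~3]{weber:2014a}, using the no return property only to upgrade the \emph{pointwise} graph picture to a genuine \emph{foliation} (i.e.\ to rule out two leaves meeting). First I would recall precisely what the two cited theorems hand us: for each $T>T_0$ and each $\gamma\in S^u_\eps$ a $C^1$ graph map $\Gg^T_\gamma:\Bb^+\to X$ whose image is a codimension-$k$ disk through the point $\gamma_T=\phi_{-T}\gamma=\Gg^T_\gamma(0)$ and which is $C^1$-close to the fibre $\Dd_\gamma$ translated back by $\phi_{-T}$; and the graph map $\Gg^\infty:\Bb^+\to X$ whose image is (an open piece of) the local stable manifold $W^s(0,\Uu)$ through $0$. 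Assembling, $F=\im\Gg\cup\im\Gg^\infty$, where $\im\Gg=\bigcup_{T>T_0,\,\gamma\in S^u_\eps}\Gg^T_\gamma(\Bb^+)$. The candidate leaves are $F(0):=\Gg^\infty(\Bb^+)$ and $F(\gamma_T):=\Gg^T_\gamma(\Bb^+)$.

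The three things to check are: (1) the leaves are pairwise disjoint; (2) they are parametrized by a $k$-disk, namely $\varphi_{-\tau}W^u_\eps$ (equivalently, the base point map $(\gamma,T)\mapsto\gamma_T$ together with the center $0$ is a bijection onto that $k$-disk); and (3) local triviality, i.e.\ around each point of $F$ there is a chart in which $F$ looks like (open set in $\R^k$) $\times$ (open set in $X^+$). For (2) I would use that $W^u(0,\Uu)$ is a $k$-disk on which the local flow is a genuine flow (Remark~\ref{rmk:diffeo-unstable-NEW}), so $\phi_{-T}$ is a diffeomorphism onto its image inside $W^u$; since every trajectory in $W^u\setminus\{0\}$ meets the sphere $S^u_\eps$ exactly once, the assignment $(\gamma,T)\mapsto\phi_{-T}\gamma$ is injective with image $W^u_\eps\setminus S^u_\eps$ shrunk back, and adjoining $0\leftrightarrow F(0)$ gives the full $k$-disk $\varphi_{-\tau}W^u_\eps$ as parameter space. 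For (3) and (1) the key input is that each $\Gg^T_\gamma$ (resp.\ $\Gg^\infty$) is a graph over $X^+$ in the splitting $X=X^-\oplus X^+$: writing a point of $F$ as $(\pi_-z,\pi_+z)$, the $\pi_-$-component identifies which leaf we are on (it equals $\pi_-$ of the base point, which lives in the $k$-dimensional $X^-$), and the leaf is then the graph of a $C^1$ map $X^+\supset\Bb^+\to X^-$; uniform $C^1$-smallness of these graph maps from the $\lambda$-Lemma makes the combined ``slope'' small, so the map $z\mapsto(\text{base point}(z),\pi_+z)$ is a local $C^1$ chart straightening $F$. This simultaneously gives local triviality and disjointness of distinct leaves within such a chart.

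The main obstacle is step (1), \emph{global} disjointness of the leaves, and this is exactly where the no return property $\Dd\cap\phi_s^{-1}\Dd=\emptyset$ is needed. Indeed the $\lambda$-Lemma only produces the $\Gg^T_\gamma$ for $\gamma$ ranging over a \emph{fixed} disk $\Dd=S^u_\eps\times\Bb^+_\kappa$; a point $z$ lying in two leaves $F(\gamma_T)$ and $F(\gamma'_{T'})$ would mean its forward trajectory hits $\Dd$ both at time (roughly) $T$ near the fibre over $\gamma$ and at time (roughly) $T'$ near the fibre over $\gamma'$, hence $\phi_{s}z\in\Dd$ and $\phi_{s+\sigma}z\in\Dd$ for $\sigma=T-T'>0$, contradicting $\Dd\cap\phi_\sigma^{-1}\Dd=\emptyset$ (and the case where one of the leaves is $F(0)\subset W^s$ is excluded because points of $W^s$ never reach $\Dd$, while points of the other leaves do, by construction). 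One has to be a little careful that the leaf $F(\gamma_T)$ consists of points whose trajectory reaches $\Dd$ in finite time landing in the fibre $\Dd_\gamma$ — this is built into the graph-map construction in~\cite{weber:2014a} — and combine it with backward uniqueness to pin down $T$ and $\gamma$ from $z$. Finally, the flow compatibility statement $z\in F(\gamma_T)\Rightarrow\phi_\sigma z\in F(\phi_\sigma\gamma_T)$ (for trajectories staying in $F$) is immediate from the semigroup property: $\phi_\sigma z$ reaches $\Dd$ at the same place $\Dd_\gamma$, but after the shorter time $T-\sigma$, so it lies on $\Gg^{T-\sigma}_\gamma(\Bb^+)=F(\phi_\sigma\gamma_T)$, using $\phi_{-(T-\sigma)}\gamma=\phi_\sigma\gamma_T$; the equivariance identity $\phi_\sigma\circ\Gg^T_\gamma=\Gg^{T-\sigma}_\gamma$ on the relevant subset is part of~\cite[Thm.~1]{weber:2014a}, so this reduces to bookkeeping.
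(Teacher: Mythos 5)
Your proposal takes essentially the same route as the paper: disjointness of leaves is obtained by combining (forward) uniqueness of the Cauchy problem started at a hypothetical common point $z$ with the no return property of $\Dd$, and flow compatibility is read off from the time-shift $\Gg^T_\gamma\rightsquigarrow\Gg^{T-\sigma}_\gamma$. Two remarks on details you add beyond what the paper spells out.

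First, in your local-triviality paragraph you write that ``the $\pi_-$-component identifies which leaf we are on (it equals $\pi_-$ of the base point).'' That is not correct and in fact contradicts the next clause, which (correctly) says the leaf is the graph of a map $\Bb^+\to X^-$. On a leaf $F(\gamma_T)=\Gg^T_\gamma(\Bb^+)$ the $\pi_-$-component is $G^T_\gamma(\pi_+z)$ and so varies with $z$; it is only \emph{close} to $\pi_-\gamma_T$ by the small-Lipschitz estimate of the $\lambda$-Lemma, not equal to it. What makes the foliation structure work is that the assignment $(\gamma_T,z_+)\mapsto\Gg^T_\gamma(z_+)$ is a continuous (indeed Lipschitz $C^1$) injection once disjointness is known, not that $\pi_-$ is leafwise constant. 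The paper in fact does not prove local triviality in detail either; it asserts it directly from the graph-map structure after establishing disjointness, so your attempted flesh-out is extra and, as written, flawed.

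Second, for disjointness the uniqueness you invoke should be the \emph{forward} Cauchy uniqueness for the initial value $z$ (this is what the paper cites); backward uniqueness is not what pins down $T$ and $\gamma$ here. You correctly treat the case of $F(0)$ separately via the observation that stable-manifold points never reach $\Dd$, a case the paper handles only implicitly. For the compatibility statement you appeal to an ``equivariance identity $\phi_\sigma\circ\Gg^T_\gamma=\Gg^{T-\sigma}_\gamma$'' as being ``part of'' the cited theorem; the paper instead \emph{derives} the identity $\phi_\sigma z=\Gg^{T-\sigma}_\gamma(\pi_+\phi_\sigma z)$ from the fixed-point characterization of the graph map (after checking $\pi_+\phi_\sigma z\in\Bb^+$), which is the same content but requires the short verification you are glossing over.
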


\begin{proof}[Proof of Corollary~\ref{cor:inv-fol}]
Assume that the leaves $F(\gamma_T)$
and $F(\beta_S)$ are disjoint whenever
$\gamma_T\not=\beta_S$. Then the Lipschitz
continuous $C^1$ maps $\Gg^T_\gamma:\Bb^+\to X$
and $\Gg^\infty:\Bb^+\to X$
endow $F$ with the structure
of a codimension $k$ foliation.
\\
To prove the assumption suppose
$(T,\gamma)\not= (S,\beta)$.
Because $T\ge T_0\ge T_1$, the endpoint
conditions~\cite[(21)]{weber:2014a}
are satisfied by the
choice of $T_1$ in~\cite[(19)]{weber:2014a}.
Assume by contradiction that 
$\Gg^T_\gamma(z_+)=\Gg^S_\beta(z_+)=:z$ for some 
$z_+\in\Bb^+$. Then by~\cite[(31)]{weber:2014a} the
point $z$ is the initial value of a heat flow 
trajectory $\xi^T$ ending at time $T$ on the fiber
$\Dd_\gamma$ and also of a heat flow trajectory $\xi^S$
ending at time $S$ on $\Dd_{\beta}$.
By uniqueness of the solution to the Cauchy 
problem~\cite[(5)]{weber:2014a}
with initial value $z$
the two trajectories coincide until time $\min\{T,S\}$.
If $T=S$, then $\gamma=\beta$ and we are done. Now 
assume without loss of generality that $T<S$, 
otherwise rename. Hence $\xi^S$ meets $\Dd_\gamma$
at time $T$ and $\Dd_\beta$ at the later time $S$.
But this contradicts the no return property of $\Dd$.

We prove compatibility of leaves and
semi-flow. The fixed
point $0$ is semi-flow invariant. Its neighborhood
$F(0)$ in the local stable manifold is trivially
semi-flow invariant in the required sense, namely up to
leaving $F(0)$.
Pick $z\in F(\gamma_T):=\Gg^T_\gamma(\Bb^+)$.
By~\cite[(31)]{weber:2014a}
the point $z$ is the initial value
of a heat flow trajectory $\xi^T$ ending at time $T$ on
the fiber $\Dd_\gamma$.
Assume the image $\xi^T([0,T])=\phi_{[0,T]} z$ is
contained in $F:=\im\Gg\cup\im\Gg^\infty$. Pick
$\sigma\in[0,T-T_0]$. This implies that
$z_+:=\pi_+\phi_\sigma z\in\Bb^+$. The
flow line $\phi_{[0,T-\sigma]}\phi_\sigma z$ runs
from $\phi_\sigma z$ to $\phi_T z\in\Dd_\gamma$.
Hence this flow line coincides with the
fixed point $\xi_{\gamma,z_+}^{T-\sigma}$ of the strict
contraction $\Psi_{\gamma,z_+}^{T-\sigma}$. But
$\phi_\sigma z=\xi_{\gamma,z_+}^{T-\sigma}(0)$ is equal to
$\Gg^{T-\sigma}_\gamma(z_+)$ again
by~\cite[(31)]{weber:2014a} and
$\Gg^{T-\sigma}_\gamma(\Bb^+)=:F(\gamma_{T-\sigma})
=F(\phi_\sigma\gamma_T)$ by definition of $F$ and
$\gamma_{T-\sigma}$.
\end{proof}

\subsubsection*{Ascending disks}
Since nondegeneracy of $x$ is equivalent
to a strictly positive spectral gap $d$,
the following two results are based
on the Palais-Morse Lemma~\cite{palais:1969a}
and the Local Stable Manifold
Theorem~\cite[Thm.~3]{weber:2014a}
whose neighborhood assertion uses
the non-trivial fact that convergence implies
exponential convergence.

\begin{figure}
  \centering
  \includegraphics{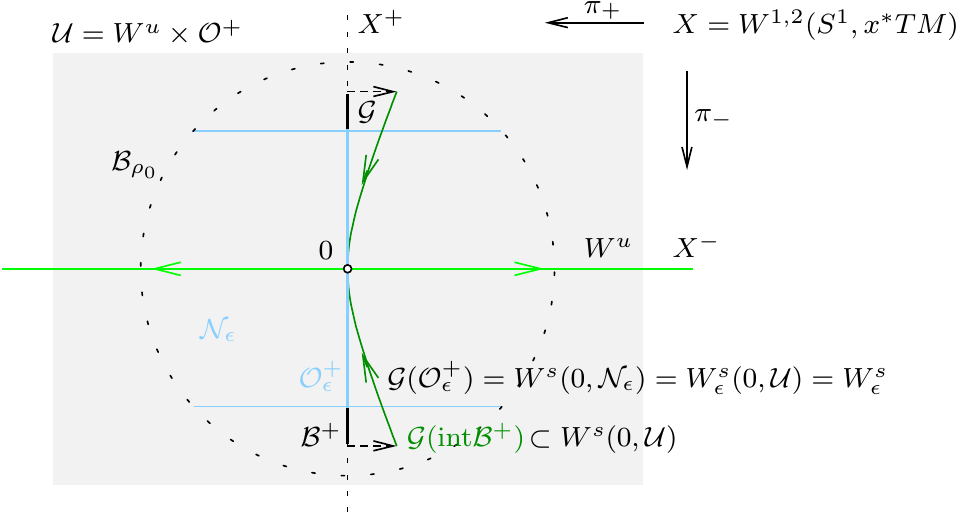}
  \caption{The local ascending disk
           $W^s_\eps(0,\Uu)$ is a graph
            and equal to $W^s_\eps$}
  \label{fig:fig-loc-stab-mf-thm}
\end{figure}

\begin{lemma}[Ascending disks]\label{le:asc-disk}
Assume~(H1) and~(H2) of
Hypothesis~\ref{hyp:local-setup-N}.
The Local Stable Manifold
Theorem~\cite[Thm.~3]{weber:2014a}
provides the closed ball $\Bb^+$ about
$0\in X^+$ of radius $r>0$.
Then there is a constant $\eps_0=\eps_0(r)>0$
such that the following is true whenever
$\eps\in(0,\eps_0]$.
\begin{enumerate}
\item[\rm (i)]
  The {\bf local ascending disk} defined by
  $$
     W^s_\eps(0,\Uu)
     :=W^s(0,\Uu)\cap\{\Ss<\Ss(0)+\eps\}
  $$
  is, firstly, a graph $\Gg^\infty(\Oo^+_\eps)$
  over the subset
  $\Oo^+_\eps:=\pi_+W^s_\eps(0,\Uu)\subset\Bb^+$
  which, secondly, is diffeomorphic to an open disk
  in $X^+$.
  Thirdly, that graph also
  coincides with the {\bf local stable manifold}
  $$
     W^s(0,\Nn_\eps)
     :=\left\{ z\in\Nn_\eps\mid\text{$\phi(s,z)\in\Nn_\eps$
     $\forall s>0$ and $\lim_{s\to\infty}\phi(s,z)=0$}\right\}
  $$
  of the set $\Nn_\eps
  :=\INT\,\Bb_{\rho_0} \cap{\pi_+}^{-1}\Oo^+_\eps
  \subset\Uu$ illustrated
  in Figure~\ref{fig:fig-loc-stab-mf-thm}.

\item[\rm (ii)]
  Any neighborhood $\Ww$ of $0$ in
  $W^s(0,\Uu)$ contains a local ascending disk.

\item[\rm (iii)]
  The local coordinate representative
  $W^s_\eps:=\Phi^{-1} W^s_\eps(x)$
  of the ascending disk $W^s_\eps(x)$
  defined by~(\ref{eq:asc-disk}) coincides
  with the local ascending disk $W^s_\eps(0,\Uu)$.
\end{enumerate}
\end{lemma}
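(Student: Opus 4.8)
The plan is to reduce all three assertions to two facts already on the table: the Local Stable Manifold Theorem~\cite[Thm.~3]{weber:2014a}, which presents $W^s(0,\Uu)$ as the $C^1$ graph $\Gg^\infty(\Bb^+)$ for a graph map $\Gg^\infty\colon\Bb^+\to X$ with $\pi_+\circ\Gg^\infty=\id_{\Bb^+}$ and $\Gg^\infty(0)=0$; and the Palais--Morse Lemma~\cite{palais:1969a}, applied not to $\Ss$ but to its pull-back $f:=\Ss\circ\Gg^\infty\colon\Bb^+\to\R$ to the local stable manifold. Since $0$ is critical for $\Ss$ and $T_0W^s(0,\Uu)=X^+$, we have $f(0)=c$, $df(0)=0$, and $D^2f(0)=D^2\Ss(0)|_{X^+\times X^+}$, positive definite by the definition of $X^+$; moreover the heat-flow energy identity together with the fact that every trajectory on $W^s(0,\Uu)$ stays in $\Uu$ and converges to $0$ gives $f(z_+)>c$ for all $z_+\neq0$. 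So $0$ is a strict, nondegenerate local minimum of $f$.

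For part~(i), the Palais--Morse Lemma applied to $f$ yields $\eps_0=\eps_0(r)>0$ and a $C^1$ chart near $0$ carrying $f-c$ to $\|\cdot\|^2$, so that for $\eps\in(0,\eps_0]$ the sublevel set $\Oo^+_\eps:=\{z_+\in\Bb^+\mid f(z_+)<c+\eps\}$ is a round ball in that chart --- hence an open disk in $X^+$ --- and the family $\{\Oo^+_\eps\}_\eps$ is nested and shrinks to $\{0\}$ as $\eps\downarrow0$. Because $\pi_+\circ\Gg^\infty=\id$ one gets $\Oo^+_\eps=\pi_+W^s_\eps(0,\Uu)$ and
\[
   W^s_\eps(0,\Uu)=\Gg^\infty(\Oo^+_\eps),
\]
the graph over $\Oo^+_\eps$ of $\pi_-\circ\Gg^\infty$ and a $C^1$ embedded copy of that disk, the graph map having left inverse $\pi_+$. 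Shrinking $\eps_0$ so that $\Gg^\infty(\Oo^+_\eps)\subset\INT\Bb_{\rho_0}$, we also get $\Gg^\infty(\Oo^+_\eps)\subset\Nn_\eps$. The identity with $W^s(0,\Nn_\eps)$ then follows in both directions: if $z\in W^s_\eps(0,\Uu)$ then, $W^s(0,\Uu)$ being semi-flow invariant and $\Ss$ non-increasing, its forward trajectory remains in $W^s_\eps(0,\Uu)=\Gg^\infty(\Oo^+_\eps)\subset\Nn_\eps$ and converges to $0$, hence $z\in W^s(0,\Nn_\eps)$; conversely any $z\in W^s(0,\Nn_\eps)\subset W^s(0,\Uu)=\Gg^\infty(\Bb^+)$ equals $\Gg^\infty(\pi_+z)$ with $\pi_+z\in\Oo^+_\eps$, so $\Ss(z)=f(\pi_+z)<c+\eps$ and $z\in W^s_\eps(0,\Uu)$.

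Part~(ii) is then immediate: $\Gg^\infty$ is a homeomorphism onto $W^s(0,\Uu)$, so a neighborhood $\Ww$ of $0$ there pulls back to a neighborhood $\pi_+\Ww$ of $0$ in $\Bb^+$, and since the disks $\Oo^+_\eps$ shrink to $\{0\}$ we have $\Oo^+_\eps\subset\pi_+\Ww$, i.e.\ $W^s_\eps(0,\Uu)\subset\Ww$, once $\eps$ is small. For~(iii) the inclusion ``$\supset$'' is easy: if $z\in W^s_\eps(0,\Uu)$ its local trajectory stays in $\Uu$ and converges to $0$, so $\Phi(z)\in W^s(x)$ with $\Ss_\Vv(\Phi(z))=\Ss(z)<c+\eps$. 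The reverse inclusion is the real content and amounts to showing $W^s_\eps(x)\subset\Phi(\Uu)$ for $\eps$ small, after which uniqueness of the local stable manifold in the coordinate patch identifies $\Phi^{-1}W^s_\eps(x)$ with $W^s(0,\Uu)\cap\{\Ss<c+\eps\}=W^s_\eps(0,\Uu)$. I would prove the containment by contradiction: were there $\gamma_n\in W^s_{\eps_n}(x)$ with $\eps_n\downarrow0$ lying at distance $\ge\delta_0>0$ from $x$ (where $\delta_0$ is smaller than the radius of the patch), then $\Ss_\Vv(\varphi_s\gamma_n)\in[c,c+\eps_n)$ for all $s\ge0$, so the forward energy tends to $0$; stopping each trajectory the first time it reaches distance $\delta_0/2$ from $x$ and using the parabolic a priori estimates, parabolic smoothing, and Arzel\`a--Ascoli, a subsequence converges to a constant trajectory at a critical point $x'$ with $\Ss_\Vv(x')=c$, $x'\in\Phi(\Bb_{\rho_0})$, and $x'\neq x$, contradicting Hypothesis~\ref{hyp:local-setup-N}~(H2). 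This is exactly the mechanism behind the footnote to~(\ref{eq:asc-disk}).

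The main obstacle is this last step --- controlling the \emph{global} ascending disk rather than just its germ at $x$ --- which genuinely uses the heat-flow a priori estimates and the absence of bubbling, equivalently the fact that convergence to $x$ implies exponential convergence, a fact already packaged in~\cite[Thm.~3]{weber:2014a}. The same circle of ideas excludes spurious components of $\Oo^+_\eps$ near $\partial\Bb^+$ (so that $W^s_\eps(0,\Uu)$ really is the single disk $\Gg^\infty(\Oo^+_\eps)$); everything else is bookkeeping with the graph identity $\pi_+\circ\Gg^\infty=\id$ and the Palais--Morse normal form.
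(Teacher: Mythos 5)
Your proof is correct and follows essentially the same route as the paper's: apply the Palais--Morse Lemma to the action restricted to the local stable manifold (you pull back via $\Gg^\infty$ to $\Bb^+$, the paper restricts $\Ss$ to a neighborhood $\Ww\subset W^s(0,\Uu)$ --- equivalent), read off the sublevel sets as shrinking disks, and identify them with $W^s(0,\Nn_\eps)$ using the graph identity $\pi_+\circ\Gg^\infty=\id$ together with semi-flow invariance. For part~(iii) you supply an explicit Palais--Smale/heat-flow-compactness argument to localize the global ascending disk inside the chart, where the paper is more compressed; the underlying mechanism (nondegeneracy via (H2) together with the exponential convergence packaged in the Local Stable Manifold Theorem, which also rules out spurious components near $\partial\Bb^+$) is the same, as you yourself note.
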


\begin{corollary}\label{cor:loc-stab-mf}
In the notation of Lemma~\ref{le:asc-disk}
assume that $\Nn\subset\Uu$ is an open subset
which contains the hyperbolic fixed point $0$.
Then the local stable manifold $W^s(0,\Nn)$
is an open neighborhood of $0$
in $W^s(0,\Uu)$.
\end{corollary}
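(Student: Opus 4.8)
\textit{Proof proposal.}
The plan is to show that $W^s(0,\Nn)$ is relatively open in $W^s(0,\Uu)$ by the classical ``stable set open inside the stable manifold'' argument; since $0\in W^s(0,\Nn)$ trivially (the constant trajectory at $0$ stays in $\Nn$ and converges to $0$), openness then gives the ``neighbourhood of $0$'' claim as well. The only genuinely nontrivial input is Lemma~\ref{le:asc-disk}~(ii): every neighbourhood of $0$ in $W^s(0,\Uu)$ contains a local ascending disk.

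First I would fix an arbitrary $z_0\in W^s(0,\Nn)$ and produce a relatively open neighbourhood of $z_0$ in $W^s(0,\Uu)$ contained in $W^s(0,\Nn)$. Since $\Nn$ is open with $0\in\Nn$, the set $\Nn\cap W^s(0,\Uu)$ is a neighbourhood of $0$ in $W^s(0,\Uu)$, so Lemma~\ref{le:asc-disk}~(ii) provides $\eps>0$ with $D:=W^s_\eps(0,\Uu)\subset\Nn$. Two elementary features of $D$ will be used: it is relatively open in $W^s(0,\Uu)$, being the trace of the open sublevel set $\{\Ss<\Ss(0)+\eps\}$; and it is forward invariant under the local semi-flow $\phi$, because $W^s(0,\Uu)$ is forward invariant and $\Ss$ is nonincreasing along the heat flow. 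Consequently the entire forward trajectory of any point of $D$ stays in $D\subset\Nn$ and converges to $0$.

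Next, using $\phi(s,z_0)\to 0$ and $\phi(s,z_0)\in\Nn$ for all $s\ge 0$, I would pick $T>0$ with $\phi(T,z_0)\in D$. The arc $\phi([0,T],z_0)$ is a compact subset of the open set $\Nn$, so by continuity of the heat semi-flow on $[0,T]\times\Uu$ -- concretely the Lipschitz continuity of the time-$s$-maps on sublevel sets already invoked in the proof of Lemma~\ref{le:Conley-set} -- there is $\delta>0$ with $\phi([0,T],z)\subset\Nn$ whenever $\|z-z_0\|<\delta$ (in particular $\phi(T,z)$ is then defined). Shrinking $\delta$, and using continuity of $z\mapsto\phi(T,z)$ on this set together with relative openness of $D$ and forward invariance of $W^s(0,\Uu)$ (so that $\phi(T,z)\in W^s(0,\Uu)$ for $z\in W^s(0,\Uu)$), I would additionally arrange $\phi(T,z)\in D$ for every $z\in W^s(0,\Uu)$ with $\|z-z_0\|<\delta$. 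Setting $\Vv:=\{z\in W^s(0,\Uu):\|z-z_0\|<\delta\}$, every $z\in\Vv$ satisfies $\phi([0,T],z)\subset\Nn$ and $\phi(s,z)=\phi(s-T,\phi(T,z))\in D\subset\Nn$ with $\phi(s,z)\to 0$ for $s\ge T$; hence $\phi_{[0,\infty)}z\subset\Nn$ and $\phi(s,z)\to 0$, i.e.\ $z\in W^s(0,\Nn)$. Thus $\Vv\subset W^s(0,\Nn)$, which proves relative openness.

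The argument is essentially soft, and the only step requiring any care is the tube/uniform-continuity step: choosing one $\delta$ that simultaneously forces ``the trajectory stays in $\Nn$ throughout $[0,T]$'' and ``the trajectory has already entered $D$ by time $T$''. Both reduce to continuity -- indeed local Lipschitz continuity on sublevel sets -- of the heat semi-flow, so no real obstacle arises; one just has to impose the $\Nn$-containment on $[0,T]$ first, so that $\phi(T,z)$ is defined before the second condition is even meaningful.
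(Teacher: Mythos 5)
Your proof is correct and uses the same key ingredients as the paper's: Lemma~\ref{le:asc-disk}~(ii) to produce a forward-invariant ascending disk $D\subset\Nn$, then Lipschitz continuity of the time-$s$-maps on sublevel sets to control the trajectory on a compact time interval $[0,T]$ after which it has entered $D$. The paper phrases this as an argument by contradiction split into two cases (exit times bounded, exit times $\to\infty$), which correspond exactly to your two conditions on $\delta$ ($\Nn$-containment on $[0,T]$, and $\phi(T,z)\in D$ hence $\Nn$-containment on $[T,\infty)$); your direct presentation avoids the case split but is otherwise the same argument.
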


\begin{proof}[Proof of Lemma~\ref{le:asc-disk}]
(Ascending disks).
By the Local Stable Manifold
Theorem~\cite[Thm.~3]{weber:2014a}
a neighborhood of $0$ in $W^s(0,\Uu)$,
say $\Ww\subset\range\Gg$,
is embedded in $\Lambda M$ and its tangent space
at $0$ is $X^+=\pi_+(X)$.
Observe that the restriction $f:=\Ss|$ of
the action to $\Ww$ is a Morse function.
Apply the Palais-Morse
Lemma~\cite{palais:1969a} to
obtain a coordinate system on
$\Ww$ (choose $\Ww$ smaller if necessary)
modelled on $T_0\Ww=X^+$ and
such that
$$
     f(y)=\sum_{j=1}^\infty\lambda_{k+j} y_j^2
$$
for every $y\in\Ww$. Here
$y=\sum_{j=1}^\infty y_j\xi_{k+j}$
and $0<\lambda_{k+1}<\lambda_{k+2}<\ldots$ are the
positive eigenvalues of the Jacobi operator $A$
associated to the critical point $0$ of $\Ss$
with corresponding normalized eigenvectors $\xi_{k+j}$;
see e.g.~\cite[(2)]{weber:2014a}.

In these coordinates the local ascending disk
$W^s_\eps(0,\Uu)$ takes the form of an open ellipse
in $X^+$ which is given by
\begin{equation*}
\begin{split}
     \Ee_\eps&:=\Ee\left(a_1,a_2,\ldots\right)
     =\biggl\{ y\in X^+\colon
     \sum_{j=1}^\infty \lambda_{k+j} y_j^2
     <\eps\biggr\}
     \subset\Oo^+_R
   \\
     a_j&:=\sqrt{\frac{\eps}{\lambda_{k+j}}}
\end{split}
\end{equation*}
and contained in the {\bf open} ball
$\dot\Bb^+_{R}\subset X^+$
of radius $R=a_1(\eps)$.
Since any neighborhood of $0$
contains a ball of sufficiently
small radius this proves part~(ii).

To prove~(i)
fix the radius $\eps_0>0$ sufficiently small such
that the open ball $\dot\Bb^+_{\eps_0}$ is contained,
firstly, in the domain
of our Palais-Morse parametrization, secondly,
in the Palais-Morse representative of $\Ww$ 
and, thirdly, in the Palais-Morse
representative of the ball $\Bb^+\subset X^+$
of radius $r>0$.
The second assertion in part~(i) follows
since $\dot\Bb^+_{\eps_0}$ represents the manifold
$W^s_{\eps_0}(0,\Uu)$ which is diffeomorphic
under $\pi_+$ to
$$
     \Oo^+_{\eps_0}
     :=\pi_+ W^s_{\eps_0}(0,\Uu)
     \subset\Bb^+.
$$
Here the diffeomorphism property follows
from the fact that $W^s_{\eps_0}(0,\Uu)$
is tangent to $X^+$ at $0$ and by choosing
$\eps_0>0$ smaller, if necessary.
The tangency argument also justifies the assumption
that $W^s_{\eps_0}(0,\Uu)\subset\INT\,\Bb_{\rho_0}$,
otherwise choose $\eps_0>0$ smaller.
The same arguments work for each $\eps\in(0,\eps_0]$
and $\Gg(\Oo^+_\eps)$ is well defined.

To prove the remaining assertions one and three in~(i)
we show that
\begin{equation}\label{eq:inclusions-Gg}
     \Gg(\Oo^+_\eps)
     \subset W^s(0,\Nn_\eps)
     =W^s_\eps(0,\Uu)
     \subset \Gg(\Oo^+_\eps)
     ,\quad
     \Nn_\eps
     :=\INT\,\Bb_{\rho_0} \cap{\pi_+}^{-1}\Oo^+_\eps,
\end{equation}
whenever $\eps\in(0,\eps_0]$.
To understand the middle identity observe that
the inclusion '$\subset$' is obvious since
$\Nn_\eps\subset\Bb_{\rho_0}\subset\Uu$.
To see the reverse '$\supset$'
note that
$$
     W^s_\eps(0,\Uu)
     \subset\left(\INT\,\Bb_{\rho_0}\cap
    {\pi_+}^{-1}\pi_+ W^s_\eps(0,\Uu)\right)
     =:\Nn_\eps.
$$
By semi-flow invariance of local ascending disks
the elements of $W^s_\eps(0,\Uu)$
converge to $0$ without leaving $W^s_\eps(0,\Uu)$,
hence without leaving $\Nn_\eps$.
But this means that $W^s_\eps(0,\Uu)\subset
W^s_\eps(0,\Nn_\eps)$.
To prove the second inclusion in~(\ref{eq:inclusions-Gg})
observe that $\Nn:=\Gg(\Oo^+_{\eps_0})$ is a neighborhood
of $0$ in $W^s(0,\Uu)$. Apply part~(ii)
proved above and readjust $\eps_0$, if necessary.
This proves that $W^s_\eps(0,\Uu)\subset\Gg(\Oo^+_\eps)$.
To prove the first inclusion in~(\ref{eq:inclusions-Gg})
pick $z\in\Gg(\Oo^+_\eps)$,
that is 
$$
     z=(Gz_+,z_+)=\Gg(z_+)\in \Gg(\Oo^+_\eps)
$$
for some $z_+\in \Oo^+_\eps$.
To see that $z\in W^s_\eps(0,\Uu)$ consider the
(unique) element $z_*$ of $W^s_\eps(0,\Uu)$
which projects under the diffeomorphism
$\pi_+:W^s_\eps(0,\Uu)\to \Oo^+_\eps$ to $z_+$.
Since we already know that 
$W^s_\eps(0,\Uu)\subset\Gg(\Oo^+_\eps)$
the point $z_*\in W^s_\eps(0,\Uu)$
is of the form $z_*=\Gg(z_+)$. But $\Gg(z_+)=z$.

The key information to prove part~(iii)
is the fact shown above using
the Palais-Morse lemma, namely
that the local ascending disk $W^s_\eps(0,\Uu)$
is contained in the interior of the ball $\Bb_{\rho_0}$
which itself is contained in the domain $\Uu$
of the parametrization $\Phi$. But $\Phi$ intertwines
the local semi-flows $\phi_s$ on $\Uu$ and $\varphi_s$
on $\Phi(\Uu)$ by its very definition; cf.~\cite[(5)]{weber:2014a}.
\end{proof}

\begin{proof}[Proof of Corollary~\ref{cor:loc-stab-mf}]
Obviously
$0\in W^s(0,\Nn)\subset W^s(0,\Uu)$.
It remains to show that the subset
$W^s(0,\Nn)$ of $W^s(0,\Uu)$ is open.
Fix $z\in W^s(0,\Nn)\subset\Nn$.
It suffices to prove existence of an
open ball $\Oo(z)\subset\Uu$ about $z$
such that the (open) subset $\Oo(z)\cap W^s(0,\Uu)$
of $W^s(0,\Uu)$ is contained in $W^s(0,\Nn)$.
Assume by contradiction that no such ball exists.
In this case there is a sequence $(z_i)$ contained
in $W^s(0,\Uu)$ and in $\Nn$,\footnote{
  We may assume that $z_i\in\Nn$ since $z$ lies
  in the open subset $\Nn$ of $\Uu$.
  }
but disjoint to $W^s(0,\Nn)$, and which
converges to $z$ in the $W^{1,2}$ topology.
Consequently for each $z_i$ there is a time $s_i>0$
such that $\phi_{s_i} z_i\notin\Nn$. Taking subsequences,
if necessary, we  distinguish two cases:
\\
In {\bf case one} the sequence $(s_i)$ is contained in some
bounded interval $[0,T]$. Now $\phi$ restricted
to a sublevel set is uniformly Lipschitz on a fixed
interval $[0,T]$ by a slightly improved version
of~\cite[Thm.~9.15]{weber:2010a-link};
see~\cite{weber:heat-book}.
Thus the sequence of continuous maps
$[0,T]\to\Uu: s\mapsto w_{z_i}(s):=\phi_s z_i$
converges uniformly to the map
$w_z:[0,T]\to\Nn\subset\Uu$.
But this implies that the image of $w_{z_i}$
is also contained in $\Nn$ for all sufficiently large $i$
which contradicts the fact that $\phi_{s_i} z_i\notin\Nn$.
\\
In {\bf case two} $s_i\to\infty$, as $i\to\infty$.
By openness of $\Nn$ there is a sufficiently small
open ball $\Oo_\rho$ of radius $\rho$ about
$0\in\Uu$ which is contained in $\Nn$.
By Lemma~\ref{le:asc-disk}~(ii) there is a local
ascending disk $W^s_\eps(0,\Uu)$
contained in the open neighborhood
$\Ww:=W^s(0,\Uu)\cap\Oo_\rho$
of $0$ in $W^s(0,\Uu)$.
Fix $\tau>0$ large such that
$\phi(\tau,z)\in W^s_{\eps/2}(0,\Uu)$.
Then the following is true for every
sufficiently large~$i$: The point
$\phi(\tau,z_i)$ lies in $W^s_\eps(0,\Uu)$
by continuity of $\phi$.
But $W^s_\eps(0,\Uu)$ is semi-flow invariant
and contained in $\Oo_\rho\subset\Nn$.
So $\phi(s,z_i)\in\Nn$
for $s\in[\tau,\infty)$
which contradicts $s_i\to\infty$.
\end{proof}

\subsubsection*{Proof of Theorem~\ref{thm:inv-fol} -- intrinsic foliation}
Assume
Hypothesis~\ref{hyp:local-setup-N}~(H1--H4).
In particular, by definition of $\mu$
in~(H3) both the descending disk $W^u_{2\mu}(x)$
and the ascending disk $W^s_{2\mu}(x)$
are manifolds and lie in the
coordinate patch $\Phi(\Bb_{\rho_0})$ about
the nondegenerate critical point $x$ of Morse index $k$.
The Local Stable Manifold
Theorem~{\cite[Thm.~3]{weber:2014a}}
provides the graph map
$\Gg^\infty:\Bb^+\to X$ defined on the
closed ball $\Bb^+=\Bb^+_r$ about $0\in X^+$
whose radius $r$ we write in the form
\begin{equation}\label{eq:def-R}
      r=:2R.
\end{equation}
Again by~{\cite[Thm.~3]{weber:2014a}}
the set $\Nn:=\Gg^\infty(\dot\Bb_{R}^+)$ is an
open neighborhood of $0$ in the local stable
manifold $W^s(0,\Uu)$. Thus $\Nn$ contains an
ascending disk by the ascending disk
Lemma~\ref{le:asc-disk}~(ii).
Choosing $\mu>0$ smaller, if
necessary, we assume without loss of generality that
there is the inclusion of the ascending disk
coordinate representative
\begin{equation}\label{eq:mu-N}
      W^s_{\mu}\subset\Nn:=\Gg^\infty(\dot\Bb_{R}^+).
\end{equation}

The coordinate representative $\Dd$ of the
tubular neighborhood $\Dd(x)$ intersects the unstable
manifold transversally in $S^u_\eps$.
Use the implicit function theorem, if
necessary, to modify the coordinate system
locally near $\Dd$ to make sure that $\Dd$
is an open neighborhood
of $S^u_\eps$ in $S^u_\eps\times X^+$.
Pick a radius $\varkappa\in(0,\rho_0)$ sufficiently
small such that $S^u_\eps\times\Bb_\varkappa ^+$
is contained in $\Dd$ and in
$\Bb_{\rho_0}$. Next diminish $\Dd$ setting
\begin{equation}\label{eq:D-def}
     \Dd:=S^u_\eps\times\Bb_\varkappa^+
     ,\qquad
     \Dd\cap\Crit=\emptyset,
\end{equation}
where the latter observation holds by~(H2).
Since $\Dd$ is contained in an action level set
and $\phi$ is a gradient semi-flow, the pair
$(\Dd,\phi)$ has the no return property.
Consider the constant
$T_0=T_0(x,\lambda,\eps,\varkappa)>0$
and the graph maps $\Gg^T_\gamma$ provided
by the Backward
$\lambda$-Lemma~{\cite[Thm.~1]{weber:2014a}}
for all $T\ge T_0$ and elements $\gamma$ of the
descending $(k-1)$-disk $S^u_\eps$;
see Figure~\ref{fig:fig-foliation-2R}.

\begin{figure}
  \centering
  \includegraphics{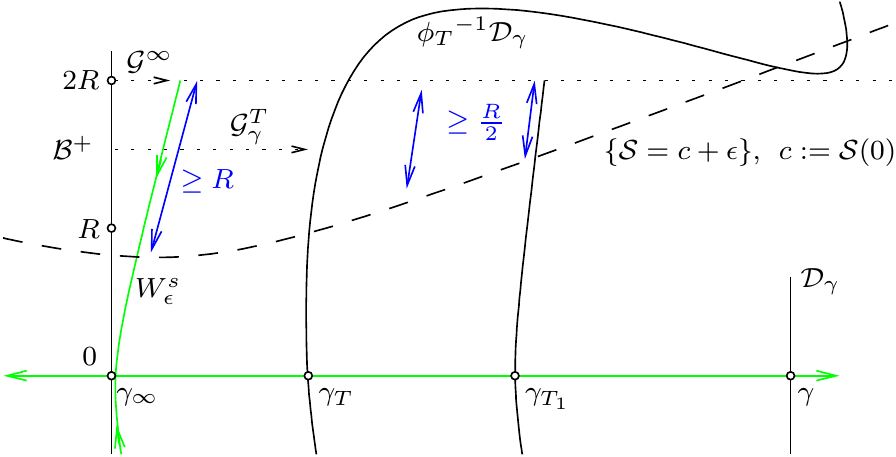}
  \caption{The disk
           $\Gg^T_\gamma(\Bb^+)\cap\{\Ss<c+\eps\}
           =\left({\phi_T}^{-1}\Dd_\gamma\cap
           \{\Ss<c+\eps\}\right)_{\gamma_T}$
           }
  \label{fig:fig-foliation-2R}
\end{figure}

\vspace{.1cm}
\noindent
{\sc Step 1. (Graphs)}
{\it
There is a constant $T_1\ge T_0$ such that the
     following is true. Assume
     $T\in[T_1,\infty]$ and $\gamma\in S^u_\eps$. Then
     the set $\Gg^T_\gamma(\Bb^+)\cap\{\Ss<c+\eps\}$
     is diffeomorphic to the open unit disk
     in $X^+$.
}

\begin{proof}
Case~1. ($T=\infty$)
The graph $\Gg^\infty(\Bb^+)$ -- which is a
neighborhood of $0$ in the local stable manifold
$W^s(0,\Uu)$ by the Local Stable Manifold
Theorem~\cite[Thm.~3]{weber:2014a}
-- intersects
the sublevel set $\{\Ss<c+\eps\}$ transversally
in the ascending disk $W^s_\eps$.
But $W^s_\eps$ is diffeomorphic to the open $\eps$-disk
in $X^+$ by the Palais-Morse lemma using the fact that
the positive part of the spectrum of the Jacobi
operator $A_x$ is bounded away from zero
(by its smallest positive eigenvector $\lambda_{k+1}$).
For the above assertions see  Lemma~\ref{le:asc-disk}.

Case~2. ($T<\infty$) By the Backward
$\lambda$-Lemma~{\cite[Thm.~1]{weber:2014a}}
the family
of disks $T\mapsto \Gg^T_\gamma(\Bb^+)$ is uniformly
$C^1$ close to the disk $\Gg^\infty(\Bb^+)$.
Transversality of the intersection with 
$\{\Ss<c+\eps\}$ is automatic
since the sublevel set is an open subset of
the loop space.
However, since the graphs $\Gg^T_\gamma(\Bb^+)$
are manifolds with boundaries
we need to make sure that these boundaries
stay away from $\{\Ss<c+\eps\}$
in order to conclude that any intersection
$\Gg^T_\gamma(\Bb^+)\cap \{\Ss<c+\eps\}$
is diffeomorphic to the intersection
$\Gg^\infty(\Bb^+)\cap \{\Ss<c+\eps\}=W^s_\eps$.
But the latter is diffeomorphic to the open unit disk
in $X^+$ by Case~1.
\\
Concerning boundaries recall that
$\pi_+\Gg^\infty(\Bb^+)=
\pi_+\Gg^T_\gamma(\Bb^+)=\Bb^+=\Bb^+_{2R}$.
Here the second identity holds by step~5 in the proof
of~{\cite[Thm.~1]{weber:2014a}}.
On the other hand, the topological boundary
of $W^s_\eps$ projects into $\Bb^+_{R}$
by the choice of $\mu$ in~(\ref{eq:mu-N}); see
Figure~\ref{fig:fig-foliation-2R}.
Thus the distance between the boundary of
$\Gg^\infty(\Bb^+)$ and the intersection
$\Gg^\infty(\Bb^+)\cap \{\Ss<c+\eps\}=W^s_\eps$
is at least $R$.
Since $\Gg^T_\gamma\to\Gg^\infty$, as $T\to\infty$,
uniformly on $\Bb^+$ and uniformly in
$\gamma\in S^u_\eps$,
there is a time $T_1>0$ such that
the distance between the boundary of
$\Gg^T_\gamma(\Bb^+)$ and the intersection
$\Gg^T_\gamma(\Bb^+)\cap \{\Ss<c+\eps\}$
is at least $R/2$ for all $\gamma$ and
$T\ge T_1$.
\end{proof}

\noindent
{\sc Step 2. (Pre-Images)}
{\it
      For all $T\ge T_1$ and $\gamma\in S^u_\eps$
     the following is true.
     \begin{enumerate}
     \item[a)]
       The disk
       $\Gg^T_\gamma(\Bb^+)\cap\{\Ss<c+\eps\}=:D$
       is a neighborhood of $\gamma_T$ in the pathwise
       connected component $P_{\gamma_T}$ of the set
       $P:={\phi_T}^{-1}\Dd_\gamma\cap\{\Ss<c+\eps\}$.
     \item[b)]
     The disk $\Gg^T_\gamma(\Bb^+)\cap\{\Ss<c+\eps\}$
     equals $P_{\gamma_T}:=\left({\phi_T}^{-1}\Dd_\gamma\cap
     \{\Ss<c+\eps\}\right)_{\gamma_T}$.
     \end{enumerate}
}

\begin{proof}
a) That $\gamma_T$ is contained in $P$ is obvious
and that it is contained in $D$ is asserted
by the Backward $\lambda$-Lemma~{\cite[Thm.~1]{weber:2014a}}.
To see that $D\subset P_{\gamma_T}$ pick $z\in D$.
Then the heat flow takes $z$ in time $T$ into
$\Dd_\gamma$ by definition of $\Gg^T_\gamma$ and
the identity~\cite[(31)]{weber:2014a}. Hence $z\in P$
and therefore $D\subset P$.
Thus to prove that $D\subset P_{\gamma_T}$ it
suffices to show that $z$ path connects
to $\gamma_T$ inside $D$. But this is trivial,
because $D$ is diffeomorphic to a disk by Step~1.
To see the neighborhood property of $D$ pick
$z\in P_{\gamma_T}$ and connect
$z$ to $\gamma_T$ inside $P$ through
a continuous path.
Of course, since $\pi_+\gamma_T=0$
the elements of the path
near $\gamma_T$ project under $\pi_+$ into
$\Bb^+$ and are therefore in the image of the
map $\Gg^T_\gamma$ defined
by~\cite[(25)]{weber:2014a}.

b) By part~a) it remains to prove the inclusion '$\supset$'.
Pick $z\in P_{\gamma_T}$ and connect $z$ to $\gamma_T$
inside $P$ through a continuous path. Note that all
points on this path have action strictly less than
$c+\eps$. Now if z was not in the disk $D$, this path
would have to cross the
topological boundary of $D$ by the
neighborhood property in a). But $\p D$ is
contained in the level set $\{\Ss=c+\eps\}$.
Contradiction.
\end{proof}

\vspace{.1cm}
\noindent
{\sc Step 3.}
{\it Set $\tau_0:=2T_1$. Assume from now on that
     $\tau>\tau_0$. Recall that
     Corollary~\ref{cor:inv-fol} provides
     the codimension $k$ foliation
     $F=F^{\eps,\tau}:=\im \Gg^{(\tau,\infty]}$. Then
     $$
       A:=F^{\eps,\tau}\cap\{\Ss<c+\eps\}
       =N^{\eps,\tau}=:N,
     $$
     that is the part $A$ below level $c+\eps$
     of the foliation $F^{\eps,\tau}$
     is equal to the coordinate representative 
     of the set $N_x^{\eps,\tau}$
     defined by~(\ref{eq:Conley-set-NEW});
     see Figure~\ref{fig:fig-ABC}.
     The point is that $A$ is essentially the image
     of a family of maps, but the definition of $N$
     requires each point being path connectable to $0$.
}

\begin{figure}
  \centering
  \includegraphics{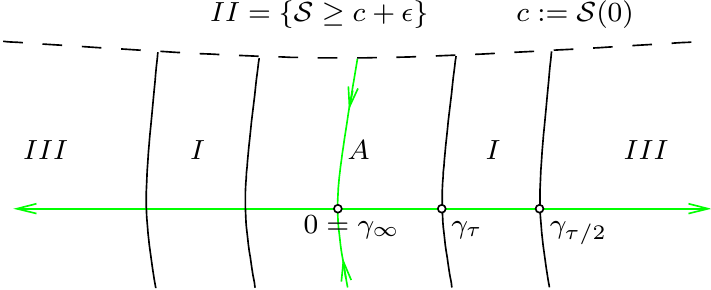}
  \caption{The set $A$ in step~3
           with neighborhood $A\cup I\cup II$
           }
  \label{fig:fig-ABC}
\end{figure}

\begin{proof}
$A\subset N$: Pick $z\in A$. Then $\Ss(z)<c+\eps$ and
$z$ is of the form $\Gg^T_\gamma(z_+)$ for some time
$T\in(\tau,\infty]$ and elements $\gamma\in S^u_\eps$
and $z_+\in\Bb^+$.
But $\Gg^T_\gamma(z_+)=\xi^T_{\gamma,z_+}(0)$
by~\cite[(31)]{weber:2014a}
and therefore $z$ runs under the
heat flow in time $T>\tau$ into the subset $\Dd$ of the
level set $\{\Ss=c-\eps\}$. Thus
$\Ss(\phi_\tau z)>c-\eps$ by the downward gradient
flow property and the fact that by~(\ref{eq:D-def})
there is no critical point of $\Ss$ on $\Dd$.
To conclude the proof
that $z\in N$ it remains to show that there is
a continuous path in $N$ between $z$ and $0$.
By Step~1 the set $\Gg^T_\gamma(\Bb^+)$ is a disk and
therefore path connected. Connect $z$ and $\gamma_T$
by a continuous path in this disk. Any point on this
path lies in
$\{\Ss<c+\eps\}\cap\{\Ss(\phi_\tau\cdot)>c-\eps\}$
by the argument just given for $z$. Connect
$\gamma_T$ and $\gamma_\infty=0$ by the obvious
backward flow line. Repeat the argument for the
points on this second path. Hence we have connected
$z$ and $0$ by a continuous path in $N$.

$A\supset N$: Assuming $z\notin A$ we prove that
$z\notin N$. To be not in $A$ we distinguish three cases;
see Figure~\ref{fig:fig-ABC}.
In case one $z$ lies in the set
$I:=\im \Gg^{(\tau/2,\tau]}\cap\{\Ss<c+\eps\}$.
But this means that $z$ reaches level $c-\eps$ in some
time $T\le\tau$. Hence $\Ss(\phi_\tau)\le c-\eps$ and
therefore $z\notin N$.
In case two $z$ lies in the set $II:=\{\Ss\ge c+\eps\}$
which is obviously disjoint to $N$.
In case three $z$ lies in the set
$III:=\{\Ss<c+\eps\}\cap
\{\Ss(\phi_{\tau/2}\cdot)\le c-\eps\}$
shown in Figure~\ref{fig:fig-ABC}.
Assume by contradiction $z\in N$.
Then $z$ and $0$ connect through a
continuous path in $N$. Note that
$0\in A$ since $\Gg^\infty(0)=0$.
Since $A\cup I\cup II$ is a
neighborhood of $A$, the path must run through
$I\cup II$ which is impossible by cases one and two.
\end{proof}

\begin{proof}[Proof of~a). (Foliation)]
By Step~3 and Corollary~\ref{cor:inv-fol}
there are the inclusions
$N^{\eps,\tau}\subset F^{\eps,\tau}\subset \Bb_{\rho_0}$.
But by~(H2) the ball $\Bb_{\rho_0}$
contains no critical point except the origin.
Thus $N_x$ is an isolating block for $x$;
this also follows from part~d).

By Corollary~\ref{cor:inv-fol} the set $F=F^{\eps,\tau}$
carries the structure of a codimension $k$ foliation.
By Step 3 the set $N=N^{\eps,\tau}$ is an open subset
of $F$ and therefore inherits the foliation structure
of $F$. We define the leaves of $N$ by
$N(0):=F(0)\cap\{\Ss<c+\eps\}
=\Gg^\infty(\Bb^+)\cap\{\Ss<c+\eps\}$ and by
$N(\gamma_T):=F(\gamma_T)\cap\{\Ss<c+\eps\}
=\Gg^T_\gamma(\Bb^+)\cap\{\Ss<c+\eps\}$
where $T\in(\tau,\infty)$ and $\gamma\in S^u_\eps$.
The second identities are just by definition of $F(0)$
and $F(\gamma_T)$ in Corollary~\ref{cor:inv-fol}.
Since the right hand sides are disks by Step~1
the leaves of $N$ are indeed parametrized
by the disjoint union of $\{0\}$ and
$(\tau_0,\infty)\times S^u_\eps$.
Hence the leaves of $N$ and $F$ are in 1-1
correspondence. They are of the asserted form by
Step 2 b).
\end{proof}

\begin{proof}[Proof of~b). (Compatibility of leaves
and semi-flow)]
That leaves and semi-flow are compatible follows from
Corollary~\ref{cor:inv-fol} as soon as we prove
that semi-flow trajectories starting and ending in
$N=N^{\eps,\tau}$ cannot leave $N$ (hence not $F$) at
any time in between. To see this decompose the
(topological) boundary of the set
$N=F\cap\{\Ss<c+\eps\}$ into
the top part $\p^+ N$ which lies in the level set
$\{\Ss=c+\eps\}$ and its complement the side part
$
     \p^- N
     =\bigcup_{\gamma\in S^u_\eps}
     \Gg^\tau_\gamma(\Bb^+)\cap\{\Ss<c+\eps\}
$
as illustrated by Figure~\ref{fig:fig-induced-flow}
below.
The downward gradient property implies, firstly, that
$\p^+ N$ cannot be reached from lower action levels
(thus not from $N$) and, secondly, that $\p^- N$
cannot be crossed twice. To prove the latter
assume by contradiction
that there are two elements $z_1\not= z_2$ of 
$$
     \p^- N
     =\left({\phi_\tau}^{-1}\Dd
     \cap\{\Ss<c+\eps\}\right)_{\phi_{-\tau} S^u_\eps}
$$
that lie on the same semi-flow trajectory starting
at, say $z_1$. Now on one hand,
the time needed from either one element to $\Dd$ is
$\tau$. On the other hand, getting from $z_1$ to $z_2$
requires the extra time $T>0$. By uniqueness of the
solution to the Cauchy problem it follows that
$\tau+T=\tau$ which contradicts $T>0$.
\end{proof}

\begin{proof}[Proof of~c). (Uniform convergence of leaves)]
Uniform and exponential convergence of leaves
follows from the exponential estimate
in~\cite[Thm.~1]{weber:2014a},
in which we can actually eliminate the constant
$\rho_0$ by choosing $T_0$ larger,
together with the inclusion
$
     N(\gamma_T)
     =\Gg^T_\gamma(\Bb^+)\cap\{\Ss_\Vv<c+\eps\}
     \subset\Gg^T_\gamma(\Bb^+)
$
and the corresponding one for $T=\infty$; for the
identity see proof of~a). This
proves~(\ref{eq:unif-exp-dist}).
Given $U$ as in the second assertion,
pick a $\delta$-neighborhood
$U_\delta\subset\Phi^{-1}(U)$
of $W^s_\eps$ in $\Bb_{\rho_0}$
for some $\delta\in(0,1)$.
Estimate~(\ref{eq:unif-exp-dist})
shows that $N^{\eps,\tau_*}\subset U_\delta$
whenever $\tau_*>-\frac{16}{\lambda}\ln \delta$.
\end{proof}

\begin{proof}[Proof of~d). (Localization of $N_x$)]
The two key ingredients are that the ascending
disk $W^s_\eps(x)$ localizes
near $x$ for small $\eps$ by the Palais-Morse Lemma
and that the isolating block $N_x^{\eps,\tau}$
contracts onto $W^s_\eps(x)$ by
estimate~(\ref{eq:unif-exp-dist}) in part~c).

Replacing the neighborhood $U$ of $x$ in
$\Lambda M$ by a smaller neighborhood,
if necessary, we solve the problem in the local
coordinate patch $\Phi(\Bb_{\rho_0})$ about $x$.
Thus we assume that $U$ is a neighborhood of
$0$ in $\Bb_{\rho_0}\subset X$.
By~(\ref{eq:def-R}) the radius of the ball $\Bb^+$
on which the stable manifold graph map
$\Gg^\infty$ is defined is $2R>0$; see
Figure~\ref{fig:fig-foliation-2R}.
Pick $\rho\in(0,R]$ sufficiently small such
that the ball $B_{2\rho}(0)$ is contained in $U$. By
the ascending disk Lemma~\ref{le:asc-disk}~(ii) the open
neighborhood $\Nn:=W^s_\eps\cap\INT B_\rho(0)$
of $0$ in the ascending disk $W^s_\eps$ contains
an ascending disk $W^s_{\eps_*}$ for some
$\eps_*\in(0,\eps)$.
Note that $W^s_{\eps_*}\subset\Nn\subset B_\rho(0)$.
Pick $\delta\in(0,\rho)$ and apply part~c)
for $W^s_{\eps_*}$ and its $\delta$-neighborhood
$U_\delta$ to obtain a constant
$\tau_*$ and the first of the inclusions
$
     N^{\eps_*,\tau_*} 
     \subset U_\delta(W^s_{\eps_*})
     \subset U_\delta(B_{\rho}(0))
     \subset B_{2\rho}(0)
     \subset U
$.
\end{proof}

\noindent
This completes the proof of
Theorem~\ref{thm:inv-fol}.

\subsection{Strong deformation retract}
\label{sec:deformation-Conley-pair}

\begin{proof}[Proof of Theorem~\ref{thm:deformation-retract}]
Assume Hypothesis~\ref{hyp:local-setup-N}.
Our construction of a strong deformation retraction
$\theta$ of $N$ onto its part $A$
in the unstable manifold
is \emph{motivated by the following observation}:
On the stable manifold the semi-flow
$\{\phi_s\}_{s\in[0,\infty]}$ itself does the job.
Indeed $\phi_\infty$ pushes
the whole leaf $N(0)$, that is the ascending disk
$W^s_\eps$ by Theorem~\ref{thm:inv-fol},
into the origin -- which
lies in the unstable manifold. Since $\phi_s$
restricted to the origin is the identity, the origin
is a strong deformation retract of $N(0)$.
If the Morse index $k$ is zero, then
$N=N(0)$ and we are done.

Assume from now on that $k>0$. In this case
the Backward $\lambda$-Lemma comes in.
It implies that $N$ is a foliation whose leaves are
$C^1$ modelled on the ascending disk $W^s_\eps$;
see Theorem~\ref{thm:inv-fol}.
The main and by now obvious idea is to use the graph
maps $G^T_\gamma$ and $\Gg^\infty$ of
Theorems~1 and~3 in~{\cite{weber:2014a}},
respectively,
and their left inverse $\pi_+$ to extend the good
retraction properties of  $\phi_s$ on the
ascending disk $N(0)$ to all the other leaves
$N(\gamma_T)$ where
$\gamma_T:=\phi_{-T}\gamma$.

\begin{figure}
  \centering
  \includegraphics{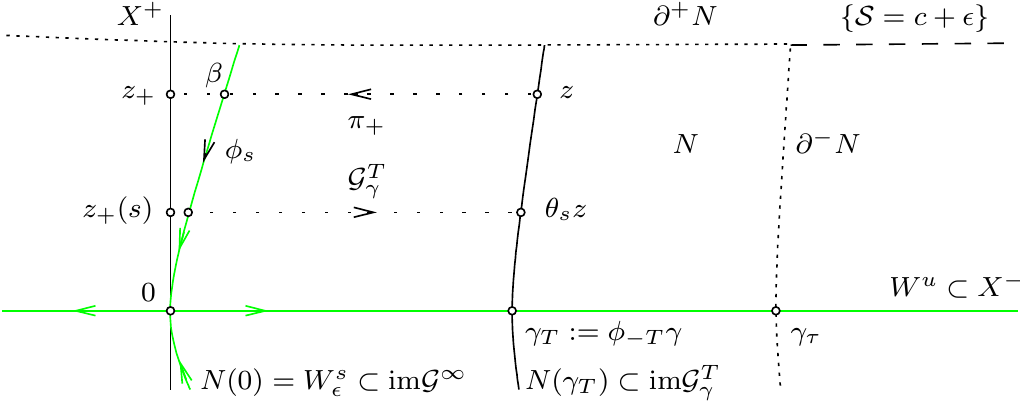}
  \caption{Leaf preserving semi-flow
           $\theta_sz
           :=\Gg^T_\gamma\pi_+\phi_s\Gg^\infty\pi_+ z$
           on foliation $N$
          }
  \label{fig:fig-induced-flow}
\end{figure}

\begin{definition}[Induced semi-flow]
\label{def:induced-semi-flow}
By Theorem~\ref{thm:inv-fol}
each $z\in N=N^{\eps,\tau}$ lies on a leaf
$N(\gamma_T)$ for some $T>\tau$ and some
$\gamma$ in the ascending disk $S^u_\eps$.
Set
$$
     z_+:=\pi_+ z,\qquad
     \beta:=\Gg^\infty(z_+),\qquad
     z_+(s):=\pi_+ \phi_s\beta,
$$
for $s\ge 0$.
Then the continuous map
$\theta:[0,\infty]\times N\to X$ given by
\begin{equation}\label{eq:str-def-retract}
     \theta_s z
     :=\Gg_\gamma^T
     \pi_+ \phi_s
     \Gg^\infty \pi_+ z
\end{equation}
is called the
{\bf induced semi-flow on \boldmath $N$};
see Figure~\ref{fig:fig-induced-flow}.
It is of class $C^1$ on $(0,\infty)\times N$ and
juxtaposition of maps means composition.
\end{definition}

Observe that $\theta$ takes values
in the image $F\supset N$ of the graph maps
and that it preserves the leaves of $F$;
see Corollary~\ref{cor:inv-fol}.
Continuity on $[0,\infty)\times N$
follows from continuity of the maps involved.
Existence of the asymptotic limit
$\phi_s\beta\to 0$, as $s\to\infty$,
for any $\beta\in W^s_\eps=N(0)$
has the following two consequences.
Assume $z\in N(\gamma_T)$. Then,
firstly, the limit
\begin{equation*}
     \theta_\infty z
     :=\lim_{s\to\infty} \theta_s z
     =\Gg^T_\gamma\pi_+
     \lim_{s\to\infty}\phi_s\beta
     =\Gg^T_\gamma(0)
     =\gamma_T
\end{equation*}
exists and lies in the unstable manifold indeed.
Here we used continuity of $\Gg^T_\gamma$
and $\pi_+$ and the fact that
$\beta=\Gg^\infty(z_+)$ lies in the stable
manifold of the origin.
The final identity holds by~\cite[Thm.~1]{weber:2014a}.
Secondly, $\theta_s z\to\theta_\infty z$,
as $s\to\infty$. The first consequence shows that
\begin{equation}\label{eq:strong-def-retract}
     \theta_\infty:N\to A
     ,\qquad
     A:=\phi_{-\tau} W^u_\eps
     \cong\{0\}\cup
     \left((\tau,\infty)\times S^u_\eps\right),
\end{equation}
is a retraction and the second one extends continuity
to $[0,\infty]\times N$.
The fact that the origin is a fixed point of $\phi_s$
implies that
\begin{equation*}
     \theta_s\gamma_T=\Gg^T_\gamma\pi_+\phi_s 0
     =\Gg^T_\gamma(0)=\gamma_T,
\end{equation*}
hence $\theta_s|_A=id_A$, for every $s\in[0,\infty]$.

To conclude the proof it remains to show
that $\theta_s$ preserves $N$. In fact, we
show that $\theta_s$ preserves the leaves
of the foliation
$$
     N=N(0)\cup
     \bigcup_{\substack{T>\tau\\\gamma\in S^u_\eps}}
     N(\gamma_T).
$$
By Theorem~\ref{thm:inv-fol} these
leaves are infinite dimensional open disks.
The idea is to show that
the function $(0,\infty)\ni s\mapsto\Ss(\theta_s z)$
strictly decreases whenever $z$ lies in the
topological  boundary of a leaf. 
This implies preservation of leaves as follows.
Firstly, note that $\theta$ is actually defined on a
neighborhood of $N(\gamma_T)$ in
$F(\gamma_T):=\Gg^T_\gamma(\Bb^+)$.
Secondly, the topological boundary of each leaf
lies on action level $c+\eps$ whereas the leaf itself
lies strictly below that level. Thus the induced
semi-flow points inwards along the boundary. So
$\theta_s$ preserves leaves and therefore
the foliation $N$.
Thus $A$ is a strong deformation retract of
$N$.\footnote{
  A deformation retraction of a topological space $N$
  onto a subspace $A$ is a homotopy between
  the identity map on $N$ and a retraction.
  More precisely, it is a continuous map
  $\theta:[0,\infty]\times N\to N$
  such that $\theta_0=id_N$, $\theta_\infty|_A=id_A$,
  ($\theta_s|_A=id_A$ for every $s\in[0,\infty]$,)
  and $\theta_\infty:N\to A$ is called
  a {\bf (strong) deformation retraction}.
  Here $[0,\infty]$ denotes the
  one point compactification.
  In this case we say {\bf\boldmath $A$ is
  a (strong) deformation retract of $N$}.
  }

\vspace{.1cm}
{\it In the remaining part of the proof
we show that the function
$s\mapsto\Ss(\theta_s z)$
strictly decreases in $s>0$ whenever $z$ lies in the
topological  boundary of a leaf.}

\vspace{.1cm}
\noindent
To see this decompose the {\bf topological boundary},
that is closure take away interior,
of the isolating block $N=N^{\eps,\tau}$ in two
parts. The {\bf upper boundary} $\p^+ N$
is the part which intersects the level set
$\{\Ss=c+\eps\}$. Similarly the
{\bf lower boundary} $\p^- N$
is the part on which the action is strictly
less than $c+\eps$; see
Figure~\ref{fig:fig-induced-flow}. The
lower part is foliated by the leaves $N(\gamma_\tau)$
where $\gamma\in S^u_\eps$.
\\
Denote the $L^2$-gradient of $\Ss$ as usual
by $\grad\Ss$ and
note that it is defined only on loops of regularity
at least $W^{2,2}$. However, for $s>0$ the loops
$\phi_s z:S^1\to M$ and, slightly less obvious, also
$\theta_s z$ are $C^\infty$ smooth and therefore of
class $W^{2,2}$. 
Figure~\ref{fig:fig-neighborhood-Ww}
illustrates the closed neighborhood
$$
     \Ww:=\Bb_{\rho_0}\cap\{\Ss\le c+\eps/2\}
$$
of $0\in X$.
\begin{figure}
  \centering
  \includegraphics{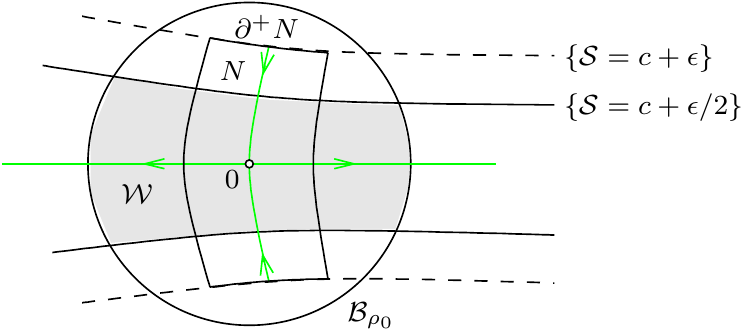}
  \caption{The complement of $\Ww$ in $\Bb_{\rho_0}$ is
           used to define $\alpha>0$}
  \label{fig:fig-neighborhood-Ww}
\end{figure}
Note that $\Ww$ is disjoint to the closed set $\p^+ N$.
Moreover, the constant
\begin{equation*}
     \alpha=\alpha(\rho_0,\eps):=
     \inf_{z\in\left(\Bb_{\rho_0}\cap W^{2,2}\right)\setminus \Ww}
     \Norm{\grad\Ss(z)}_2
     >0
\end{equation*}
is strictly positive. To see this assume $\alpha=0$.
Since $\Ss:W^{1,2}\to\R$ satisfies the
Palais-Smale condition there is a sequence $(z_k)$ in
$\left(\Bb_{\rho_0}\cap W^{2,2}\right)\setminus \Ww$
converging in $W^{1,2}$ to a critical point of $\Ss$
in $\Bb_{\rho_0}\setminus \Ww$. But this contradicts the
fact that, by our choice of $\rho_0$, the only
critical point in $\Bb_{\rho_0}$ is the origin
which lies in $\Ww$.

Assume $z$ is in the closure of $N$, that is
$z$ is in the closure of a leaf $N(\gamma_T)$
for some $T\ge\tau$ and $\gamma\in S^u_\eps$.
Recall from~\cite[(5)]{weber:2014a}
that in our coordinates $\grad\Ss$ is represented
by $A-f$ where $A=A_x$ is the Jacobi operator and
$f$ is the nonlinearity defined
by~\cite[(6)]{weber:2014a}.
By~\cite[Prop.~1~(b)]{weber:2014a} the operator $A$
preserves the vector space $X^-:=\pi_- X$ of
dimension $k>0$. The restriction $A^-$ lies in
$\Ll(X^-)$ and satisfies $\norm{A^-}=\abs{\lambda_1}$
where $\lambda_1<0$ denotes the smallest eigenvalue of
$A$. By definition of $\Gg_\gamma^T$ and
$\Gg^\infty$ in Theorems~1 and~3
in~\cite{weber:2014a} the difference
$$
     \theta_s z-\phi_s q
     =\Gg_\gamma^T(z_+(s))-\Gg^\infty(z_+(s))
     =\left(G_\gamma^T(z_+(s))-G^\infty(z_+(s)),0\right)
$$
lies in $X^-\subset C^\infty$.
This implies the first identity in the
estimate
\begin{equation}\label{eq:grad-diff}
\begin{split}
    &\Norm{\grad\Ss(\phi_s q)-\grad\Ss(\theta_s z)}_2
   \\
    &=\Norm{A^-(\phi_s q-\theta_s z)
     +f(\theta_s z)-f(\phi_s q)}_2
   \\
    &\le\left(\Abs{\lambda_1}+\kappa_0\right)
     \Norm{\theta_s z-\phi_s q}_{1,4}
   \\
    &=c_1
     \Norm{\Gg_\gamma^T(z_+(s))-\Gg^\infty(z_+(s))}_{1,4}
   \\
    &\le\rho_0 c_1 e^{-T\frac{\lambda}{16}}
\end{split}
\end{equation}
which holds for every $s>0$
and where $c_1:=(\abs{\lambda_1}+\kappa_0)$.
The first inequality also uses 
the Lipschitz Lemma~\cite[Le.~1]{weber:2014a}
for $f$ and $p=2$ with constant
$\kappa_0:=\kappa(\rho_0)$.
The final inequality is by~\cite[Thm.~1]{weber:2014a}.
Choose $\tau$ larger, if necessary, such that
\begin{equation}\label{eq:alpha/100}
     \rho_0 c_1e^{-\tau\frac{\lambda}{16}}
     \le\frac{1}{16}
     ,\qquad
     3\rho_0 c_1e^{-\tau\frac{\lambda}{16}}
     \le\frac{\alpha}{100}
     ,\qquad
     12\rho_0c_1
     e^{-\tau\frac{\lambda}{16}}
     \le\frac{\alpha^2}{8},
\end{equation}
and abbreviate
\begin{equation*}
     v_\pm=v_\pm(s):=\pi_\pm\grad\Ss(\theta_s z).
\end{equation*}
Apply the identity $\pi_-+\pi_+=\1$ and add
twice zero to obtain the estimate
\begin{equation}\label{eq:pi_-gradS}
\begin{split}
     \Norm{v_-}_2
    &=\Norm{\grad\Ss(\theta_s z)-v_+}_2
   \\
    &\le
     \Norm{\grad\Ss(\theta_s z)
     -\1\,\grad\Ss(\phi_s q)}_2
   \\
    &\quad
     +\Norm{d\Gg^\infty|_{z_+(s)}\pi_+
     \left(\grad\Ss(\phi_s q)
     -\grad\Ss(\theta_s z)\right)}_2
   \\
    &\quad
     +\Norm{d\Gg^\infty|_{z_+(s)} v_+-v_+}_2
   \\
    &\le
     3\Norm{\grad\Ss(\theta_s z)-\grad\Ss(\phi_s q)}_2
     +\frac{1}{4}\Norm{v_+}_2
   \\
    &\le
     3\rho_0 c_1e^{-T\frac{\lambda}{16}}
     +\frac{1}{4}\Norm{v_+}_2.
\end{split}
\end{equation}
To see the first zero which has been added
recall that (by definition of $\Gg^\infty$)
the projection $\pi_+$ restricted to the image $N(0)$
of $\Gg^\infty$ is the identity map on $N(0)$.
Linearization at the point $\phi_s q\in N(0)$
shows that
$
     d\Gg^\infty|_{z_+(s)}\pi_+
     =\1_{T_{\phi_s q} N(0)}
$.
The second inequality uses the two estimates
provided by~\cite[Prop.~3]{weber:2014a}.
The final inequality is by~(\ref{eq:grad-diff}).

From now on fix $z\in \p^+ N=\p^+ N^{\eps,\tau}$.
Observe that $z$
lies on action level $c+\eps$ and in the image of a
graph map $\Gg^T_\gamma$ where $\gamma\in S^u_\eps$
and $T>\tau$. (For $T=\tau$ there is nothing to prove.)
By continuity of $\theta$,
the downward gradient property, and openness
of $N$ there is a time $T_z>0$ such that
for each $s\in(0,T_z)$ the following holds.
The path $s\mapsto\theta_s z$ remains, firstly,
in $N$ and, secondly, above level $c+\frac{\eps}{2}$.
Thus $\theta_s z$, firstly, satisfies
estimates~(\ref{eq:grad-diff})--(\ref{eq:pi_-gradS})
and, secondly, remains in the complement of
$\Ww$ used to define $\alpha$.
By~(\ref{eq:pi_-gradS}) we get
\begin{equation}\label{eq:grad-Ss}
     \Norm{\grad\Ss(\theta_s z)}_2
     \le\Norm{v_-}_2+\Norm{v_+}_2
     \le 3\rho_0 c_1e^{-T\frac{\lambda}{16}}
     +\frac{5}{4}\Norm{v_+}_2
\end{equation}
which together with $T>\tau$
and the second assumption in~(\ref{eq:alpha/100})
implies that
\begin{equation}\label{eq:v_+-alpha}
     \Norm{v_+}_2
     >\frac{4}{5}\left(\Norm{\grad\Ss(\theta_s z)}_2
     -\frac{\alpha}{100}\right)
     >\frac{3}{4}\alpha
\end{equation}
for every $s\in(0,T_z)$. The final step
is by definition of $\alpha$.
Observe that
\begin{equation*}
\begin{split}
     \frac{d}{ds}\Ss(\theta_s z)
    &=d\Ss|_{\theta_s z} \,
     d\Gg_\gamma^T|_{z_+(s)} \,
     \pi_+  
     \tfrac{d}{ds}
     \left(\phi_s\Gg^\infty\pi_+ z\right)
   \\
    &=-\left\langle
     \grad\Ss|_{\theta_s z},
     d\Gg_\gamma^T|_{z_+(s)}
     \pi_+
     \grad\Ss|_{\phi_s q}
     \right\rangle_{L^2}
\end{split}
\end{equation*}
for every $s\in(0,T_z)$.
Here the second identity uses the definition
of the $L^2$-gradient and the fact that the semi-flow
$\phi_s$ is generated by $-\grad\Ss$.
Add three times zero to obtain that
\begin{equation}\label{eq:L2-extension-used}
\begin{split}
     \frac{d}{ds}\Ss(\theta_s z)
    &=
     -\left\langle
     \grad\Ss|_{\theta_s z},
     d\Gg_\gamma^T|_{z_+(s)}\pi_+
     \left(
       \grad\Ss|_{\phi_s q}
       -\grad\Ss|_{\theta_s z}
     \right)
     \right\rangle_{L^2}
   \\
    &\quad
     -\left\langle
     \grad\Ss|_{\theta_s z},
     \left(
       d\Gg_\gamma^T|_{z_+(s)}-d\Gg^\infty|_{z_+(s)}
     \right)\pi_+
     \grad\Ss|_{\theta_s z}
     \right\rangle_{L^2}
   \\
    &\quad
     -\left\langle
     \grad\Ss|_{\theta_s z},
     \left(
       d\Gg^\infty|_{z_+(s)}-\1
     \right)\pi_+
     \grad\Ss|_{\theta_s z}
     \right\rangle_{L^2}
   \\
    &\quad
     -\left\langle
     \grad\Ss|_{\theta_s z},\pi_+
     \grad\Ss|_{\theta_s z}
     \right\rangle_{L^2}
\end{split}
\end{equation}
for every $s\in(0,T_z)$.
At this point the $L^2$ extension
of the linearized graph maps enters.
Namely, use the difference
estimate~(\ref{eq:grad-diff}), the uniform
estimates for the linearized graph maps provided
by~\cite[Prop.~3]{weber:2014a}
and~\cite[Thm.~2]{weber:2014a}, and the
identity $\grad\Ss|_{\theta_s z}=v_-+v_+$ to get
\begin{equation*}
\begin{split}
     \frac{d}{ds}\Ss(\theta_s z)
    &\le
     \Norm{\grad\Ss(\theta_s z)}_2
     \left(
     2\rho_0 c_1e^{-T\frac{\lambda}{16}}
     +e^{-T\frac{\lambda}{16}}\Norm{v_+}_2
     \right)
   \\
    &\quad
     +\left(\Norm{v_-}_2+\Norm{v_+}_2\right)
     \frac{\Norm{v_+}_2}{4}
     -\Norm{v_+}_2^2
   \\
    &\le
     \left(
     3\rho_0 c_1e^{-T\frac{\lambda}{16}}+\frac{5}{4}\Norm{v_+}_2
     \right)
     \left(
     2\rho_0 c_1e^{-T\frac{\lambda}{16}}
     +e^{-T\frac{\lambda}{16}}\Norm{v_+}_2
     \right)
   \\
    &\quad
     +3\rho_0 c_1e^{-T\frac{\lambda}{16}}
     -\left(1-\frac{1}{4}-\frac{1}{16}
     \right)\Norm{v_+}_2^2
   \\
    &\le
     6\rho_0 c_1e^{-T\frac{\lambda}{16}}
     +6\rho_0 c_1 e^{-T\frac{\lambda}{16}}\Norm{v_+}_2
     -\frac{11}{16}\Norm{v_+}_2^2
   \\
    &\le
     12\rho_0 c_1e^{-T\frac{\lambda}{16}}
     -\frac{1}{2}\Norm{v_+}_2^2
   \\
    &\le-\tfrac{1}{4}\alpha^2
\end{split}
\end{equation*}
for every $s\in(0,T_z)$.
Consider the two lines after the first inequality.
Line one corresponds to the first two lines
in~(\ref{eq:L2-extension-used})
and line two corresponds to the last two lines;
in the last line orthogonality of $\pi_\pm$ enters.
Inequality two is by
estimate~(\ref{eq:grad-Ss}) for $\grad\Ss$
and~(\ref{eq:pi_-gradS}) for $v_-$.
To obtain inequality three we multiplied out the product
and used the first assumption 
in~(\ref{eq:alpha/100}). Inequality four
uses for the middle term
Young's inequality $ab\le\frac12 a^2+\frac12 b^2$
for $b=2^{-1}\norm{v_+}_2$ together
with the first assumption in~(\ref{eq:alpha/100}).
The final step uses the third assumption
in~(\ref{eq:alpha/100}) and
estimate~(\ref{eq:v_+-alpha}) for $v_+$.

This proves that the induced semi-flow $\theta_s$ is
inward pointing along the boundary of each leaf
$N(\gamma_T)$
and thereby completes the proof of
Theorem~\ref{thm:deformation-retract}.
\end{proof}

\begin{remark}\label{rem:changes}
The downward $L^2$-gradient nature of the heat
equation~(\ref{eq:heat}) causes the $L^2$ norm
to appear in estimates~(\ref{eq:grad-diff})
and~(\ref{eq:L2-extension-used}). The first
estimate involves the nonlinearity $f$ of the heat
equation.
To make sure that $f$ takes values in $L^2$ the
domain $W^{1,4}$ is the right choice;
see~\cite[(6)]{weber:2014a}.
The second estimate leads to the $L^2$ norms of
the linearized graph maps.
Cf.~\cite[Rmk.~1]{weber:2014a}.
\end{remark}

\subsection{Conley pairs}\label{sec:Conley-pairs}

\begin{proof}[Proof of Theorem~\ref{thm:Conley-pair}]
We need to verify properties~(i--iv)
in Definition~\ref{def:index-pair}.

{(i)}~Since $x$ is a fixed point of
the heat flow $\varphi$ and
$c:=\Ss_\Vv(x)=\Ss_\Vv(\varphi_{2\tau} x)$
it follows immediately that
$x\in N_x$ and $x\notin L_x$.
The latter conclusion also
uses continuity of the function
$\Ss_\Vv\circ\varphi_{2\tau}:\Lambda M\to \R$.
We only used $\eps,\tau>0$.

{(ii)}~For $\eps\in(0,\mu]$ and $\tau>\tau_0$
with $\mu$ and $\tau_0$
as in~(H4) of
Hypothesis~\ref{hyp:local-setup-N}
assertion~(ii) holds by
Theorem~\ref{thm:inv-fol}, that is
$N_x$ is an isolating block for~$x$.

{(iii)}~To prove that $L_x$ is positively 
invariant in $N_x$ it suffices to assume
$\gamma\in L_x$ and
$\varphi_s\gamma\in N_x$ for some
$s\ge 0$.\,\footnote{
  Using the downward gradient flow property
  this is equivalent to the usual hypothesis
  $\gamma\in L_x$ and
  $\varphi_{[0,s]}\gamma\subset N_x$ for some
  $s\ge 0$. (Use that our
  $N_x$ is path connected by definition.)
  }
It follows that $\varphi_s\gamma\in L_x$, because
$$
     \Ss_\Vv(\varphi_{2\tau}(\varphi_s\gamma))
     =\Ss_\Vv(\varphi_{2\tau+s}\gamma)
     \le\Ss_\Vv(\varphi_{2\tau}\gamma)
     \le c-\eps.
$$
Indeed the first step holds by
the semigroup property and the second step
by the downward gradient flow property.
The final step uses the assumption
$\gamma\in L_x$.

{(iv)}~Let $\eps$ and $\tau$ be as in~(H4)
Hypothesis~\ref{hyp:local-setup-N}.
Then Theorem~\ref{thm:inv-fol} applies, in
particular, there are no critical points other than
$x$ in the closure of $N_x$.
We need to verify that semi-flow trajectories
can leave $N_x$ only through $L_x$.
If $\gamma\in L_x$ and 
$\varphi_T\gamma\notin N_x$ the
assertions follow immediately from
openness of $N_x$, continuity of $\varphi$,
and the fact that $L_x$ is positively
invariant in $N_x$ by~(iii).
Now assume
that $\gamma\in N_x\setminus L_x$
and $\varphi_T\gamma\notin N_x$
for some time $T>0$.
Hence $\gamma\not= x$ and
$$
     \Ss_\Vv(\gamma)<c+\eps,\qquad
     \Ss_\Vv(\varphi_{2\tau}\gamma)>c-\eps,\qquad
     \Ss_\Vv(\varphi_{\tau+T}\gamma)\le c-\eps.
$$
Inequality three
excludes the case that $\gamma$
is in the ascending disk $W^s_\eps(x)$.
Thus by Theorem~\ref{thm:inv-fol} part~a)
the semi-flow trajectory through $\gamma$
reaches the action level $c-\eps$ 
in some finite time $T_*>\tau$.
In fact $T_*> 2\tau$
by inequality two.
Set $a:=T_*-2\tau>0$ to obtain that
$
          c-\eps
          =\Ss_\Vv(\varphi_{T_*}\gamma)
          =\Ss_\Vv(\varphi_{2\tau+a}\gamma)
$.
Set $b:=\tau+a>a$ 
to obtain that $T_*=2\tau+a=\tau+b$.
So the identity reads
$
          c-\eps
          =\Ss_\Vv(\varphi_{\tau+b}\gamma)
$.
Thus $b\le T$ by inequality three.
Next we show that
$a$ is the unique time
at which the orbit through
$\gamma$ enters $L_x$
and $b$ is the unique time
when it leaves $L_x$.

More precisely, we show that
$\varphi_s\gamma\in N_x$
if and only if $s\in[0,b)$ and that
$\varphi_s\gamma\in L_x$
if and only if $s\in[a,b)$.
To see the first of these two statements
pick $s\in[0,b)$.
Then 
$
     \Ss_\Vv(\varphi_s\gamma)
     \le\Ss_\Vv(\gamma)
     <c+\eps
$
since $\gamma\in N_x$.
Furthermore, note that
$\tau+s<\tau+b=2\tau+a=T_*$.
So
$
     \Ss_\Vv(\varphi_\tau(\varphi_s\gamma))
     =\Ss_\Vv(\varphi_{\tau+s}\gamma)
     >\Ss_\Vv(\varphi_{T_*}\gamma)
     =c-\eps.
$
The inequality is strict
since $\gamma\not= x$.
Vice versa, assume $\varphi_s\gamma\in N_x$.
Since this only makes sense for $s\ge0$
it remains to show $s<b$, equivalently
$s+\tau<T_*$. The latter follows from the fact
that $\Ss_\Vv(\varphi_{\tau+s}\gamma)>c-\eps$
since $\varphi_s\gamma\in N_x$
and the fact that
$\Ss_\Vv(\varphi_{T_*}\gamma)=c-\eps$
together with the downward gradient flow property.
\\
To see the second statement
pick $s\in[a,b)$.
Since $[a,b)\subset[0,b)$, the first statement
tells $\varphi_s\gamma\in N_x$.
So it remains to show
$
     \Ss_\Vv(\varphi_{2\tau}(\varphi_s\gamma))
     \le c-\eps
$
which is equivalent to $2\tau+s\ge T_*$. Indeed
$2\tau+s\ge 2\tau+a=T_*$
by our choice of $s$ and definition of $a$.
Vice versa, assume $\varphi_s\gamma\in L_x$
for some $s>0$. Then we get the two inequalities
$
     \Ss_\Vv(\varphi_\tau(\varphi_s\gamma))
     >c-\eps
$
and
$
     \Ss_\Vv(\varphi_{2\tau}(\varphi_s\gamma))
     \le c-\eps
$
by definition of $L_x$.
If $s\ge b$, equivalently
$\tau+s\ge \tau+b=T_*$,
we get
$
     \Ss_\Vv(\varphi_{s+\tau}\gamma)
     \le \Ss_\Vv(\varphi_{T_*}\gamma)
     =c-\eps
$ which contradicts inequality one.
In the case $s\in(0,a)$
we get
$
     \Ss_\Vv(\varphi_{2\tau+s}\gamma)
     >\Ss_\Vv(\varphi_{T_*}\gamma)
     =c-\eps
$ which contradicts inequality two.

Pick any $\sigma\in[a,b)\subset(0,T)$
to conclude the proof of~(iv).
Indeed $\varphi_{[0,\sigma]}\gamma\subset N_x$
by the first statement
(and the assumption
$\varphi_0\gamma\in L_x\subset N_x$)
and $\varphi_\sigma\gamma\in L_x$
by the second statement.
This concludes the proof of
Theorem~\ref{thm:Conley-pair}.
\end{proof} 

\begin{proposition}[Strong deformation retract]
\label{prop:N-L}
The Conley pair $(N_x,L_x)$ in
Theorem~\ref{thm:Conley-pair}
strongly deformation retracts
to its part $(N_x^u,L_x^u)$
in $W^u(x)$, i.e.
\begin{equation*}
\begin{split}
     \left(N_x,L_x\right)
     \simeq
     \left(N_x^u,L_x^u\right)
     =\left(
     \varphi_{-\tau} W^u_\eps(x),
     \varphi_{[-2\tau,-\tau)} S^u_\eps(x)\right).
\end{split}
\end{equation*}
Here the final pair of spaces consists of an open
$k$-disk, see~(\ref{eq:N-cap-W}), and
a (relatively) closed annulus which arises by
removing the smaller $k$-disk
$\varphi_{-2\tau} W^u_\eps(x)$.
\end{proposition}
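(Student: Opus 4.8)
The plan is to reduce the statement to Theorem~\ref{thm:deformation-retract} together with two explicit identifications of subsets of the unstable manifold. Theorem~\ref{thm:deformation-retract} already produces a strong deformation retraction of the pair $(N_x,L_x)$ onto $(N_x^u,L_x^u)=(N_x\cap W^u(x),\,L_x\cap W^u(x))$ and records that the target pair is an open $k$-disk with a smaller open disk removed; so it remains only to put $N_x^u$ and $L_x^u$ into the closed form claimed, using that the backward flow $\varphi_{-s}$ is available on $W^u(x)$ by Remark~\ref{rmk:diffeo-unstable-NEW} and is there a diffeomorphism.

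First I would quote Lemma~\ref{le:N_x}, identity~(\ref{eq:N-cap-W}): $N_x^u=N_x^{\eps,\tau}\cap W^u(x)=\varphi_{-\tau}W^u_\eps(x)$, an open $k$-disk parametrized by $\{x\}\cup\bigl((\tau,\infty)\times S^u_\eps(x)\bigr)$ via $(T,\gamma_0)\mapsto\varphi_{-T}\gamma_0$; here one uses that every nonconstant trajectory on $W^u(x)$ meets the descending sphere $S^u_\eps(x)=W^u(x)\cap\{\Ss_\Vv=c-\eps\}$ exactly once, which is immediate from the strictly decreasing action along heat trajectories. For the same reason the sets $\varphi_{-T}S^u_\eps(x)$, $T>\tau$, are pairwise disjoint.

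Next I would compute $L_x^u$. By definition $\gamma\in L_x^u$ iff $\gamma\in N_x^u$ and $\Ss_\Vv(\varphi_{2\tau}\gamma)\le c-\eps$. Writing a nonzero such $\gamma=\varphi_{-T}\gamma_0$ with $\gamma_0\in S^u_\eps(x)$ and $T>\tau$, one has $\varphi_{2\tau}\gamma=\varphi_{2\tau-T}\gamma_0$; since the action strictly decreases along nonconstant trajectories on $W^u(x)$ and equals $c-\eps$ precisely on $S^u_\eps(x)$, this lies at or below level $c-\eps$ exactly when $2\tau-T\ge 0$ and strictly above it when $T>2\tau$. As $\Ss_\Vv(\varphi_{2\tau}x)=c>c-\eps$, this yields $L_x^u=\bigcup_{\tau<T\le 2\tau}\varphi_{-T}S^u_\eps(x)=\varphi_{[-2\tau,-\tau)}S^u_\eps(x)$, which is the asserted set.

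Finally, for the annulus description the disjointness above gives $N_x^u\setminus L_x^u=\{x\}\cup\bigcup_{T>2\tau}\varphi_{-T}S^u_\eps(x)=\varphi_{-2\tau}W^u_\eps(x)$, again an open $k$-disk (the image of $W^u_\eps(x)$ under the diffeomorphism $\varphi_{-2\tau}$ of $W^u(x)$), contained in $N_x^u$ since $\varphi_{-2\tau}W^u_\eps(x)=\varphi_{-\tau}\bigl(\varphi_{-\tau}W^u_\eps(x)\bigr)\subset\varphi_{-\tau}W^u_\eps(x)=N_x^u$. Hence $L_x^u$ is the open $k$-disk $N_x^u$ with the smaller open $k$-disk $\varphi_{-2\tau}W^u_\eps(x)$ deleted, i.e. a half-open annulus, relatively closed in $N_x^u$ because $L_x$ is relatively closed in $N_x$. (For $k=0$ one has $W^u(x)=\{x\}$, $S^u_\eps(x)=\emptyset$, and both $L_x^u$ and the annulus are empty, consistent with the formulas.) The only step needing any care is the identification of $L_x^u$, namely the monotonicity-and-uniqueness argument that a trajectory on $W^u(x)$ crosses each action level exactly once and that backward flow raises the action; but this involves no analysis, everything taking place on the finite-dimensional manifold $W^u(x)$ where $\varphi_{-s}$ exists.
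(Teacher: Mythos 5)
Your set identifications are correct, carefully justified, and actually more explicit than the paper's one-liner ``intersect the second identity in (\ref{eq:N-cap-W}) with $L_x$''; the computation that $T\in(\tau,2\tau]$ precisely characterizes $\gamma=\varphi_{-T}\gamma_0\in L_x^u$ via monotonicity of the action, the identification $N_x^u\setminus L_x^u=\varphi_{-2\tau}W^u_\eps(x)$, and the $k=0$ edge case are all sound.

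There is, however, one elision worth flagging. You quote Theorem~\ref{thm:deformation-retract} as already delivering the retraction of the \emph{pair} $(N_x,L_x)$ onto $(N_x^u,L_x^u)$. While the theorem's statement is phrased for the pair, its proof in \S\ref{sec:deformation-Conley-pair} only constructs $\theta$ and verifies that $A=N_x^u$ is a strong deformation retract of $N_x$, concluding with the observation that $\theta_s$ preserves the individual leaves $N_x(\gamma_T)$. What the paper's proof of Proposition~\ref{prop:N-L} actually supplies — and what your proposal silently absorbs into the citation — is the observation that $L_x=\bigcup_{(T,\gamma)\in(\tau,2\tau]\times S^u_\eps}N_x(\gamma_T)$ is itself a union of whole leaves, so that the leaf-preserving map $\theta$ automatically maps $L_x$ into $L_x$ and thereby restricts to a strong deformation retraction of $L_x$ onto $L_x^u$. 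Without this step one does not know a priori that $\theta_s(L_x)\subset L_x$, which is the essence of the pair claim. So a complete write-up should make this leaf decomposition of $L_x$ explicit (it follows from the same characterization of $L_x^u$ you already computed, lifted to leaves via Theorem~\ref{thm:inv-fol}~a)) rather than attributing the full pair retraction to the earlier theorem.
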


\begin{proof}
The assertions for $N_x=N_x^{\eps,\tau}$ are true
by Theorem~\ref{thm:deformation-retract}
and~(\ref{eq:N-cap-W}).
Concerning $L_x=L_x^{\eps,\tau}$
pick $z\in N_x\setminus\{ x\}$.
By Theorem~\ref{thm:inv-fol} part~a)
this means that
$$
     z\in N_x(\gamma_T)
     =\left({\varphi_T}^{-1}\Dd_\gamma (x)\cap
     \{\Ss<c+\eps\}\right)_{\gamma_T}
     ,\qquad
     \gamma_T:=\varphi_{-T}\gamma,
$$
for some $\gamma\in S^u_\eps(x)$ and $T>\tau$.
Thus $z$ reaches action level $c-\eps$
under the semi-flow in time
$T\in(\tau,2\tau]$ if and only if
$\Ss_\Vv(\varphi_{2\tau} z)\le c-\eps$.
This shows that
$$
     L_x
     =\bigcup_{(T,\gamma)\in(\tau,2\tau]\times S^u_\eps}
     N_x(\gamma_T)
$$
since $L_x\subset N_x$.
Therefore $L_x$ carries the structure of a foliation
whose leaves are given by the corresponding leaves
of $N_x$. Thus the restriction to $L_x$
of the (leaf preserving)
strong deformation retraction $\theta$
of $N_x$ onto $N_x\cap W^u(x)$
given by~(\ref{eq:str-def-retract})
is a strong deformation retraction
of $L_x$ onto its part in the unstable
manifold.
This proves the first assertion.
Intersect the second identity
in~(\ref{eq:N-cap-W}) with $L_x$
to obtain the second assertion.
Concerning dimensions note that
the disks and the annulus
are open subsets of the unstable manifold
$W^u(x)$ whose dimension is the Morse index
$k$ of $x$ by~\cite[Thm.~18]{weber:2013b}.
\end{proof}

\subsubsection*{Homology of Conley pairs}\label{sec:Conley-homotopy-index}

\begin{definition}[Canonical orientations]
\label{def:canonical-orientation}
Given $k\ge 1$ we denote by
$\D^k$ the closed unit disk in $\R^k$.
The {\boldmath\bf canonical orientations
of $\R^k$ and 
$\D^k$} are provided by the
(ordered) canonical basis
$\Ee=(e_1,\dots,e_k)$ of $\R^k$.
The induced orientation of the boundary
$\p \D^k=\SS^{k-1}$, called
{\bf canonical boundary orientation}, is given
by putting the outward normal in
slot one, that is by declaring the sum
\begin{equation}\label{eq:standard-orientation}
     \R^k=\R \xi\oplus T_\xi \SS^{k-1}
\end{equation}
an oriented sum for each
$\xi\in\SS^{k-1}\subset\R^k$.
By definition an {\bf orientation of a point}
is a sign. With this convention the
canonical orientation of each point of
the 0-sphere $\SS^0=\{-1,+1\}\subset\R^1$
is provided by its own sign.
By definition 
$
     \D^0=\{0\}=\R^0
$
and
$
     \SS^{-1}=\p\D^0=\emptyset
$.
For $k\ge 1$ the {\bf positive generators}
$$
     a_k =[\D^k_{\langle\mathrm{can}\rangle}]
     \in\Ho_k(\D^k,\SS^{k-1})
     ,\qquad
     b_{k-1}=[\SS^{k-1}_{\langle\mathrm{can}\rangle}]
     \in\Ho_{k-1}(\SS^{k-1}),
$$
are given, respectively, by the class of the relative
cycle $\D^k$ equipped with its canonical orientation
and the class of $\SS^{k-1}$ with its canonical
orientation .
The 0-sphere $\SS^0=\{q,p\}\subset\R^1$,
where $q=-1$ and $p=+1$, is canonically oriented
by the boundary orientation of $\D^1=[-1,1]$.
The connecting homomorphism $\p$ maps $a_1$
to $b_0=[p-q]\in\Ho_0(\SS^0)\cong\Z^2$.
\end{definition}

\begin{figure}
  \centering
  \includegraphics{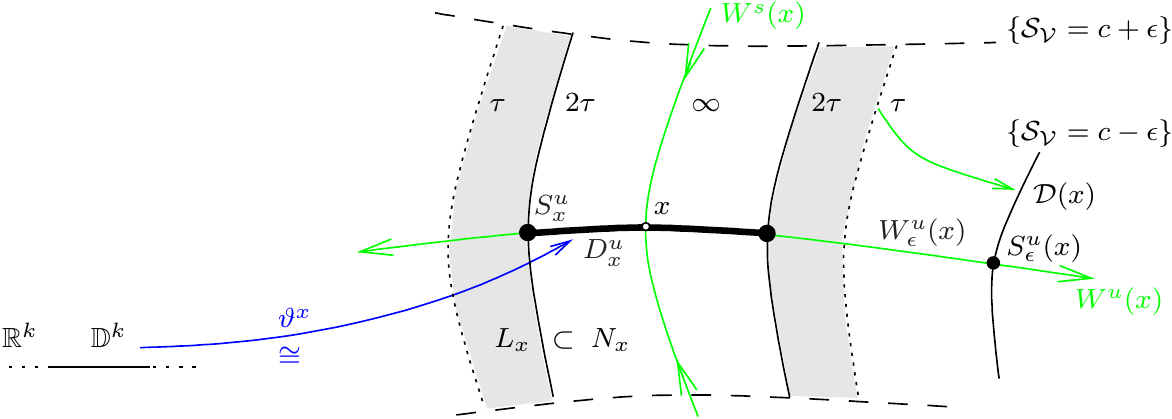}
  \caption{The $k$-disk $D^u_x\subset N_x$
      and its bounding sphere $S^u_x\subset L_x$
           }
  \label{fig:fig-D_u_x}
\end{figure}

\begin{theorem}[Homology of Conley pairs]
\label{thm:Conley-index}
Given a nondegenerate critical point $x$
of Morse index $k$ and one of the Conley pairs
$(N_x,L_x)=(N_x^{\eps,\tau},L_x^{\eps,\tau})$
provided by Theorem~\ref{thm:Conley-pair}.
Fix a diffeomorphism\footnote{
  Use the Morse Lemma to
  define a diffeomorphism 
  $\D^k\cong \overline{W^u_\eps(x)}$ and
  recall from Remark~\ref{rmk:diffeo-unstable-NEW}
  that restricted to the unstable manifold $W^u(x)$
  the heat flow turns into a genuine flow,
  then apply the diffeomorphism
  $\varphi_{-2\tau}|_{W^u(x)}$.
  }
\begin{equation}\label{eq:vartheta-2}
     \vartheta^{x}:\D^k\to D^u_x
     :=\varphi_{-2\tau}\overline{W^u_\eps(x)}
\end{equation}
between the closed unit disk $\D^k\subset\R^k$
and the disk $D^u_x$ which is contained in
$N_x\cap W^u(x)$ and whose boundary is given by
$
     S^u_x
     :=\p D^u_x=\varphi_{-2\tau} S^u_\eps(x)
$
and lies in the exit set $L_x$; see
Figure~\ref{fig:fig-D_u_x}.
Then there are the isomorphisms
\begin{equation}\label{eq:nat-iso}
\xymatrix{
     \Ho_*(\D^k,\SS^{k-1})
     \ar[r]^{\vartheta^x_*}_\cong
   &
     \Ho_*(D^u_x,S^u_x)
     \ar[r]^{\iota_*}_\cong
   &
     \Ho_*(N_x,L_x)
     }
\end{equation}
which are non-trivial only in degree
$k=\IND_\Vv(x)$ and where $\iota$
denotes inclusion. Furthermore, it holds that
$(\iota\circ\vartheta^x)_*:
[\D^k]\mapsto [D^u_x]\mapsto [D^u_x]$.
\end{theorem}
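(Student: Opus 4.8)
The plan is to split the composite $\iota\circ\vartheta^x$ through the part of the Conley pair lying in the unstable manifold and to reduce everything to the strong deformation retract of Theorem~\ref{thm:deformation-retract}. Write $N^u_x:=N_x\cap W^u(x)=\varphi_{-\tau}W^u_\eps(x)$ and $L^u_x:=L_x\cap W^u(x)=\varphi_{[-2\tau,-\tau)}S^u_\eps(x)$ as in Proposition~\ref{prop:N-L}, so that $\iota$ is the composite of the inclusions of pairs
$$
     (D^u_x,S^u_x)\ \hookrightarrow\ (N^u_x,L^u_x)\ \hookrightarrow\ (N_x,L_x).
$$
By functoriality of singular homology it suffices to check that $\vartheta^x_*$ and each of these two inclusion maps induces an isomorphism on relative homology. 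The right-hand inclusion does so because, by Theorem~\ref{thm:deformation-retract} (equivalently Proposition~\ref{prop:N-L}), the pair $(N_x,L_x)$ strongly deformation retracts onto $(N^u_x,L^u_x)$, and a strong deformation retraction of pairs induces isomorphisms on relative homology. As for $\vartheta^x$: by its construction (Morse Lemma together with Remark~\ref{rmk:diffeo-unstable-NEW}) it equals $\varphi_{-2\tau}|_{W^u(x)}\circ\psi$, where $\psi:(\D^k,\SS^{k-1})\to(\overline{W^u_\eps(x)},S^u_\eps(x))$ is a diffeomorphism of pairs and $\varphi_{-2\tau}|_{W^u(x)}$ is a diffeomorphism of $W^u(x)$; hence $\vartheta^x:(\D^k,\SS^{k-1})\to(D^u_x,S^u_x)$ is a homeomorphism of pairs and $\vartheta^x_*$ is an isomorphism carrying the fundamental class $a_k=[\D^k]$ onto the generator $[D^u_x]$ of $\Ho_k(D^u_x,S^u_x)$, by which we mean $D^u_x$ equipped with the orientation transported by $\vartheta^x$.

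The one point that needs a short argument is that the left-hand inclusion $(D^u_x,S^u_x)\hookrightarrow(N^u_x,L^u_x)$ induces an isomorphism; I would prove this by exhibiting $(D^u_x,S^u_x)$ as a strong deformation retract of $(N^u_x,L^u_x)$. Assume $k\ge1$; for $k=0$ one has $W^u(x)=\{x\}$, all four sets degenerate, and the claim is trivial. Applying the diffeomorphism $\varphi_{2\tau}|_{W^u(x)}$, the claim becomes that the pair $\bigl(\varphi_\tau W^u_\eps(x),\ \varphi_{[0,\tau)}S^u_\eps(x)\bigr)$ strongly deformation retracts onto $\bigl(\overline{W^u_\eps(x)},\ S^u_\eps(x)\bigr)$. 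The key observation is that along forward trajectories in $W^u(x)\setminus\{x\}$ the action $\Ss_\Vv$ is strictly decreasing, since $\frac{d}{ds}\Ss_\Vv(\varphi_s z)=-\|\grad\Ss_\Vv(\varphi_s z)\|_2^2$ vanishes only at the critical point $x$; hence $(z,s)\mapsto\varphi_s z$ is a diffeomorphism from $S^u_\eps(x)\times[0,\tau)$ onto $\varphi_{[0,\tau)}S^u_\eps(x)$ (smoothness of the inverse by the implicit function theorem applied to the time needed to reach the level $c-\eps$), and moreover $\varphi_\tau W^u_\eps(x)=\overline{W^u_\eps(x)}\cup\varphi_{(0,\tau)}S^u_\eps(x)$ with the two pieces overlapping only in $S^u_\eps(x)$. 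Thus $\varphi_{[0,\tau)}S^u_\eps(x)$ is a half-open collar $\SS^{k-1}\times[0,1)$ whose closed end is $S^u_\eps(x)$; scaling the collar coordinate to $0$ retracts it onto $S^u_\eps(x)$, and extending this homotopy by the identity on $\overline{W^u_\eps(x)}$ gives a strong deformation retraction of the displayed pair onto $\bigl(\overline{W^u_\eps(x)},S^u_\eps(x)\bigr)$, continuity at the seam $S^u_\eps(x)$ being immediate from the collar diffeomorphism. Conjugating back by $\varphi_{-2\tau}|_{W^u(x)}$ yields the desired strong deformation retraction of $(N^u_x,L^u_x)$ onto $(D^u_x,S^u_x)$.

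Combining the three isomorphisms gives the isomorphisms~(\ref{eq:nat-iso}); since $\Ho_*(\D^k,\SS^{k-1})$ is $\Z$ in degree $k=\IND_\Vv(x)$ and zero otherwise, so are $\Ho_*(D^u_x,S^u_x)$ and $\Ho_*(N_x,L_x)$. Finally $a_k=[\D^k]$ is sent by $\vartheta^x_*$ to $[D^u_x]\in\Ho_k(D^u_x,S^u_x)$ and then by $\iota_*$ to its image in $\Ho_k(N_x,L_x)$, which is again denoted $[D^u_x]$ — the class of the relative cycle $D^u_x$ carrying the orientation transported from the canonical one on $\D^k$ — so that $(\iota\circ\vartheta^x)_*\colon[\D^k]\mapsto[D^u_x]\mapsto[D^u_x]$, as claimed. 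The only place where one must not be cavalier is the half-openness of $L^u_x$: the retraction has to land on the closed inner end $S^u_x=\varphi_{-2\tau}S^u_\eps(x)$ and not on the missing outer end, which is precisely why describing $L^u_x$ through the backward flow with the action as collar coordinate is the convenient tool. All of the substantial analytic content — that $(N_x,L_x)$ can be retracted onto its unstable part while staying inside $N_x$, and the foliation underlying this — has already been carried out in Theorems~\ref{thm:inv-fol} and~\ref{thm:deformation-retract}, so here the remaining work is essentially bookkeeping.
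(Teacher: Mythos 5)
Your proposal is correct and rests on exactly the same two ingredients as the paper's proof: the strong deformation retraction $\theta$ of $(N_x,L_x)$ onto $(N^u_x,L^u_x)$ from Theorem~\ref{thm:deformation-retract}, and a second strong deformation retraction collapsing the annulus $N^u_x\setminus\INT D^u_x$ onto $\p D^u_x=S^u_x$. Your collar parametrization $(z,s)\mapsto\varphi_s z$ and the paper's entrance-time function $\Tt_A$ yield literally the same homotopy (conjugate by $\varphi_{\pm2\tau}|_{W^u(x)}$ and compare formulas), so the only genuine difference is in the final bookkeeping: you factor $\iota$ through the intermediate pair $(N^u_x,L^u_x)$ and conclude directly that both inclusions are isomorphisms because each is a homotopy inverse of a strong deformation retraction of pairs, whereas the paper forms the composite $r_*\theta_*\iota_*$, checks it fixes the generator $[D^u_x]$, and then separately argues injectivity and surjectivity of $\iota_*$. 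Your framing is slightly cleaner. Two minor remarks: you should write $\varphi_{[0,\tau)}S^u_\eps(x)$ rather than $\varphi_{(0,\tau)}S^u_\eps(x)$ in the displayed decomposition of $\varphi_\tau W^u_\eps(x)$, to match the relative closedness of $L^u_x$ from Proposition~\ref{prop:N-L} and the overlap statement you make immediately after; and when you invoke Theorem~\ref{thm:deformation-retract} it is worth noting explicitly that the homotopy $\theta_s$ of~(\ref{eq:str-def-retract}) preserves the leaves $N_x(\gamma_T)$ and hence $L_x$ (as Proposition~\ref{prop:N-L} records), so that it really is a deformation retraction \emph{of pairs} — the same implicit check is needed in the paper's proof as well, where only $r:N^u_x\to D^u_x$ is written.
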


\begin{proof}
Since $\vartheta^x:\D^k\to D^u_x$ is a diffeomorphism
which maps $\p\D^k$ to $S^u_x$ it induces
an isomorphism on relative homology.
Thus the image $D^u_x$ of the relative
cycle $\D^k$ represents
one of two generators
of $\Ho_*(D^u_x,S^u_x)\cong\Z$.
  To distinguish them
  one needs to specify an orientation
  of $D^u_x$; see
  Definition~\ref{def:sign-vartheta}.
By~(\ref {eq:N-cap-W}) the boundary $S^u_x$ of
$D^u_x$ is $\varphi_{-2\tau} S^u_\eps(x)$ and it
lies in $L_x$ by Proposition~\ref{prop:N-L}.
Hence the inclusion
$\iota:(D^u_x,S^u_x)\hookrightarrow (N_x,L_x)$
provides an element of $\Ho_k(N_x,L_x)$ denoted
by $\iota_*[D^u_x]=[\iota(D^u_x)]$ or simply by
$[D^u_x]$.
To see that $\iota_*[D^u_x]$ is actually a basis --
in other words, that the inclusion $\iota$ induces
an isomorphism --
recall that
$
     (N^u_x,L_x^u)=(N_x\cap W^u(x),L_x\cap W^u(x))
$
and consider the homomorphisms
\begin{equation}\label{eq:idea-incl}
\xymatrix{
     \Ho_*(D^u_x,S^u_x)
     \ar[r]^{\iota_*}
   &
     \Ho_*(N_x,L_x)
     \ar[r]^{\theta_*}_\cong
   &
     \Ho_*(N_x^u,L_x^u)
     \ar[r]^{r_*}_\cong
   &
      \Ho_*(D^u_x,S^u_x).
     }
\end{equation}
Here $\theta:=\theta^\infty:N_x\to N^u_x$
is the strong deformation
retraction~(\ref{eq:str-def-retract})
referred to by
Theorem~\ref{thm:deformation-retract}
and $r=h_1:N^u_x\to D^u_x$ is
the strong deformation retraction to be
defined below.
Because both deformation retractions are strong,
we get that $r_*\theta_*\iota_*[D^u_x]
=[id(id(\iota(D^u_x))]=[D^u_x]$. But $[D^u_x]$
generates $\Ho_*(D^u_x,S^u_x)$ and so $\iota_*$
has to be injective. Moreover, since
isomorphisms map bases to bases and
${\theta_*}^{-1}{r_*}^{-1}([D^u_x])
=\iota_*[D^u_x]$ it follows that $\iota_*$
is surjective, thus an isomorphism.

It remains to construct a map
$h:[0,1]\times N^u_x\to N^u_x$,
$(\lambda,\gamma)\mapsto h_\lambda(\gamma)$,
providing a homotopy between $h_0=id_{N^u_x}$
and $r:=h_1:N^u_x\to D^u_x$ and such that
$h_\lambda|_{D^u_x}=id_{D^u_x}$ for every
$\lambda\in[0,1]$. Consider the annuli
$X\supset A$ given by
$$
     X
     :=W^u(x)\setminus\INT D^u_x
     =W^u(x)\setminus\varphi_{-2\tau} W^u_\eps(x)
     ,\qquad
     A:=W^u(x)\setminus W^u_\eps(x),
$$
and the entrance time function
$
     \Tt_A:X\mapsto[0,2\tau]
$
as defined by~(\ref{eq:entrance-time}) below
while constructing the third
isomorphism in the proof of
Theorem~\ref{thm:cellular-filtration}.
By arguments analogous to the ones used
during that construction
$\Tt_A$ is lower semi-continuous
by closedness of $A\subset X$
and upper semi-continuous by
(forward) semi-flow invariance of $A$ in $X$.
Then the map defined by
\begin{equation*}
     h_\lambda(\gamma):=
     \begin{cases}
        \gamma
        &\text{, $\gamma\in D^u_x$,}
        \\
        \varphi_{\lambda(\Tt_A(\gamma)-2\tau)}\gamma
        &\text{, $\gamma\in N^u_x\setminus\INT D^u_x$,}
     \end{cases}
\end{equation*}
has all the desired properties. It is well defined
since $\Tt_A$ vanishes on $\p D^u_x$.
\end{proof} 

\begin{definition}\label{def:sign-vartheta}
(i)~In the setting of Theorem~\ref{thm:Conley-index}
assume $\D^k$ carries the canonical orientation.
Pick an orientation $\langle x\rangle$
of $W^u(x)$. Then
\begin{equation}\label{eq:sign-vartheta}
     \sigma_{\langle x\rangle}
     :=
     \begin{cases}
       +1&\text{, if $\vartheta^x:\D^k\to W^u(x)$
       preserves orientation,}\\
       -1&\text{, otherwise.}
     \end{cases}
\end{equation}
is called the
{\bf sign} of $\vartheta^x$
with respect to $\langle x\rangle$.

(ii)~Consider the linear transformation
$\mu:=\diag(-1,1,\dots,1)\in\R^{k\times k}$.
It is an orientation reversing
diffeomorphism of $\R^k$ and of $\D^k$.
With the conventions
\begin{equation}\label{eq:sign-kappa}
     \mu^0=\1,\qquad
     \kappa_{\langle x\rangle}
     =\tfrac12 \left(1+\sigma_{\langle x\rangle}\right)
     \in\{0,1\}
\end{equation}
we get the identity of induced isomorphisms
\begin{equation}\label{eq:vartheta-or}
\begin{split}
     \sigma_{\langle x\rangle}\vartheta^x_*=
     (\vartheta^x\circ\mu^{\kappa_{\langle x\rangle}})_k
     :\Ho_*(\D^k,\SS^{k-1})
     \to\Ho_k\bigl(D^u_x,S^u_x\bigr)
\end{split}
\end{equation}
which map the positive generator
$a_k=[\D^k_{\langle\mathrm{can}\rangle}]$ is
to the generator $[D^u_{\langle x\rangle}]$
of $\Ho_k\bigl(D^u_x,S^u_x\bigr)\cong\Z$.
Here $D^u_{\langle x\rangle}$ denotes
the relative cycle $D^u_x$ oriented by
$\langle x\rangle$.
\end{definition}

\section{Morse filtration and natural isomorphism}
\label{sec:filtration-iso}

In section~\ref{sec:filtration-iso}
we construct the natural isomorphism
in Theorem~\ref{thm:main},
in other words, we calculate singular homology
of the sublevel set $\Lambda^a M$
in terms of the homology of the Morse~complex
$
     \left(\CM^a_*(V),
     \p^M_*(V,v_a)\right)
$
defined in section~\ref{sec:main-results}.
Recall that the chain group $\CM^a_*(V)$
is the free Abelian group generated by
oriented critical points
$\langle x\rangle\in\Crit^a$
of the Morse function $\Ss_V$
-- without assigning
the role of a distinct generator to one
of the two possible orientations
since we divide out subsequently by the
relation~(\ref{eq:relation}).
The Morse boundary operator counts
heat flow trajectories $u$ between
critical points of Morse index difference
one according to how the corresponding
push-forward orientations
$u_*\langle x\rangle$
match at the lower end.

The key idea is to consider an intermediate
chain complex associated to a cellular filtration
which, on the level of homology,
is already known to be naturally 
isomorphic to singular homology.
On the other hand, the additional geometric data 
provided by the Morse-Smale function $\Ss_\Vv$
given by~(\ref{eq:action-fct})
gives rise to a very particular filtration,
namely, a Morse filtration
whose associated cellular chain complex
equals the Morse complex up to natural identification.
In the case of a finite dimensional manifold
this idea has been used by
Milnor~\cite{milnor:1965a} in the context of
a \emph{self-indexing}\footnote{
  Self-indexing means that $f(x)=k$
  whenever $x$ is a critical
  point of $f$ of Morse index $k$.
  }
Morse function $f:M\to\R$
in which case just the sublevel sets
$F_k:=f^{-1}((-\infty,k+\tfrac12])$
itself provide a Morse filtration.
For a Banach manifold with a genuine
\emph{flow} generated by a $C^1$ vector field
a suitable filtration has been constructed by
Abbondandolo and Majer~\cite{abbondandolo:2006a}
who, moreover, provide full details of their
construction of an isomorphism (depending on
choices of orientations) between Morse
and singular homology.

Obviously the Hilbert manifold
of $W^{1,2}$ loops in $M$ is the natural
domain of the action functional $\Ss_\Vv$
and its Hilbert manifold structure
facilitates the analysis. Moreover, the space
$\Lambda^a M$ of $W^{1,2}$ loops in $M$
whose action is less or equal than $a$
is homotopy equivalent
to its subset $\Ll^a M$ of smooth loops
(see e.g.~\cite[\S~17]{milnor:1963a}
or footnote\footnote{
   Theorem (Palais, \cite[Thm.~16]{palais:1966a}).
   Given a Banach space $\Lambda$, a dense subspace
   $\Ll$, and an open subset
   $\Lambda^a\subset\Lambda$.
   Then the inclusion
   $\Lambda^a\cap\Ll\hookrightarrow\Lambda^a$
   is a homotopy equivalence.
   }).
Thus singular homology of both spaces
is naturally isomorphic and
Theorem~\ref{thm:main}
covers~\cite[Thm.~A.7]{salamon:2006a}.
Furthermore, it is not necessary that the potential
$\Vv$ is a sum~(\ref{eq:perturbation-MS})
of a geometric potential $V$ and an abstract
perturbation $v_a$. All we need is that
$\Vv$ satisfies axioms~{\rm (V0)--(V3)}
in~\cite{weber:2013b} and is
\emph{Morse-Smale below the regular level $a$}
in the functional analytic sense
of~\cite[\S 1]{weber:2013b}.
Any $\Vv$ that satisfies~{\rm (V0)--(V3)}
gives rise to a $C^1$ \emph{semi}-flow
\begin{equation}\label{eq:varphi-a}
     \varphi:(0,\infty)\times\Lambda^a M
     \to\Lambda^a M
     ,\qquad
     \Lambda^a M:=\{\Ss_\Vv\le a\},
\end{equation}
which extends continuously to zero;
see e.g.~\cite{weber:2010a-link}.

In what follows we construct
the natural isomorphism for
the semi-flow~(\ref{eq:varphi-a}).
For simplicity think of $\Vv$
as given by~(\ref{eq:perturbation-MS}).
To avoid overusing the word
'continuous' all maps are assumed
to be continuous
unless specified differently.

\subsection{Morse filtration}\label{sec:Morse filtration}

Assume $\Vv$ is a perturbation
that satisfies axioms~{\rm (V0)--(V3)}
in~\cite{weber:2013b} and $\Ss_\Vv$ is
Morse-Smale below the regular level $a$.
We construct a Morse filtration
$\Ff=\left( F_k\right)$ associated to
$\Ss_\Vv:\Lambda^a M\to\R$ such that,
in addition, each set $F_k$ is \emph{open}
and \emph{semi-flow invariant}.

Consider the closed ball $B_x^\rho$
of radius $\rho>0$ about $x$ with respect
to the $W^{1,2}$ metric on $\Lambda M$.
Since $a$ is a regular value and
the critical points are nondegenerate
there is a sufficiently small radius
$\rho=\rho(a)>0$ such that 
\begin{equation}\label{eq:rho}
     B_x^\rho\subset\Lambda^a M,\qquad
     B_x^\rho\cap B_y^\rho=\emptyset,
\end{equation}
for any two distinct elements $x$ and $y$
of the finite set $\Crit^a$.
The Morse-Smale condition guarantees that
there are no flow lines from one critical
point to another one of equal or larger
Morse index.
The following lemma
generalizes this principle, firstly,
to small neighborhoods
(cf.~\cite[Lemma~2.5]{abbondandolo:2006a})
and, secondly, to semi-flows.
More precisely, the lemma guarantees
that the Morse index strictly
decreases whenever there is a flow trajectory
from $B_x^\rho$ to $B_y^\rho$
and $\rho>0$ is sufficiently small.
We postpone proofs.

\begin{lemma}[Morse-Smale on neighborhoods]
\label{le:34}
There is a constant 
$\rho=\rho(a)>0$ such that
the pre-images ${\varphi_s}^{-1}B_y^\rho$ satisfy
\begin{equation}\label{eq:34}
      B_x^\rho\cap
     {\varphi_s}^{-1}B_y^\rho
     =\emptyset, \quad\forall s\ge 0,
\end{equation}
for all pairs of distinct critical
points $x,y\in\Crit^a$ 
with $\IND_\Vv(x)\le\IND_\Vv(y)$.
\end{lemma}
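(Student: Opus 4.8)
The plan is to argue by contradiction, following the strategy of Abbondandolo--Majer~\cite[Lemma~2.5]{abbondandolo:2006a} but adapted to the semi-flow via the compactness theory of the heat flow~\cite{weber:2013b,weber:2010a-link}. First I would note that~(\ref{eq:34}) for all $s\ge 0$ is equivalent to $\varphi_s(B_x^\rho)\cap B_y^\rho=\emptyset$ for all $s\ge 0$. Fix the radius $\rho_0=\rho_0(a)>0$ provided by~(\ref{eq:rho}) and suppose the assertion fails for every $\rho=1/n\le\rho_0$. Since $\Crit^a$ is finite, after passing to a subsequence we may fix a single pair of distinct critical points $x,y\in\Crit^a$ with $\IND_\Vv(x)\le\IND_\Vv(y)$ and, for each $n$, an element $\gamma_n\in B_x^{1/n}$ and a time $s_n\ge 0$ with $\varphi_{s_n}\gamma_n\in B_y^{1/n}$. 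Hence $\gamma_n\to x$ and $\varphi_{s_n}\gamma_n\to y$ in $W^{1,2}$. Set $c_x:=\Ss_\Vv(x)$ and $c_y:=\Ss_\Vv(y)$; since the action is non-increasing along heat flow trajectories and $\Ss_\Vv(\gamma_n)\to c_x$, $\Ss_\Vv(\varphi_{s_n}\gamma_n)\to c_y$, we get $c_x\ge c_y$.

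If a subsequence of $(s_n)$ is bounded, say $s_n\to s_*<\infty$, then Lipschitz continuity of the time-$s$-map on sublevel sets, uniformly on $[0,s_*+1]$ (a mild extension of~\cite[Thm.~9.1.5]{weber:2010a-link}, see~\cite{weber:heat-book}), together with $\varphi_s x=x$ yields $\varphi_{s_n}\gamma_n\to\varphi_{s_*}x=x$, contradicting $\varphi_{s_n}\gamma_n\to y\ne x$. So $s_n\to\infty$. In this case the energies $E_n:=\Ss_\Vv(\gamma_n)-\Ss_\Vv(\varphi_{s_n}\gamma_n)\to c_x-c_y$ are uniformly bounded, and the main step is to invoke compactness of heat flow trajectories~\cite{weber:2013b,weber:2010a-link}: after passing to a subsequence, the parametrized segments $s\mapsto\varphi_s\gamma_n$ on $[0,s_n]$, suitably time-shifted, converge to a broken heat flow trajectory from $x$ to $y$, i.e.\ there are critical points $x=z_0,z_1,\dots,z_m=y$ and non-constant heat flow lines from $z_{i-1}$ to $z_i$, $i=1,\dots,m$, whose total energy is $c_x-c_y$. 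The left end is anchored at $x$ since $\gamma_n\to x$ forces $\varphi_{[0,T]}\gamma_n\to x$ for every fixed $T$ by the continuity used above; \emph{anchoring the right end at $y$ -- without the benefit of a backward flow -- and excluding loss of energy at infinity is where the analytic input of~\cite{weber:2013b} is essential, and this is the crux of the argument}.

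Given such a broken trajectory, if $c_x=c_y$ then its total energy vanishes, so every piece is constant and $x=y$, contradicting $x\ne y$; hence $c_x>c_y$ and $m\ge 1$ with all pieces non-constant. For each $i$ a non-constant heat flow line from $z_{i-1}$ to $z_i$ is an orbit contained in $W^u(z_{i-1})\cap W^s(z_i)$; since $\Ss_\Vv$ is Morse--Smale below level $a$, this intersection is transverse, see~(\ref{eq:MS-DS}), hence a manifold of dimension $\IND_\Vv(z_{i-1})-\IND_\Vv(z_i)$ which, containing a non-constant orbit, is at least one-dimensional. Therefore $\IND_\Vv(z_{i-1})>\IND_\Vv(z_i)$ for every $i$, and chaining these inequalities gives $\IND_\Vv(x)=\IND_\Vv(z_0)>\IND_\Vv(z_m)=\IND_\Vv(y)$, contradicting $\IND_\Vv(x)\le\IND_\Vv(y)$. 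This contradiction shows that~(\ref{eq:34}) holds for $\rho=1/n$ for some, and hence (by monotonicity of the balls) for all sufficiently large, $n$, which provides the required constant $\rho=\rho(a)$.
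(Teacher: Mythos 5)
Your proposal is correct and follows essentially the same route as the paper's proof: argue by contradiction, extract sequences $\gamma_n\to x$ and $\varphi_{s_n}\gamma_n\to y$, rule out bounded $s_n$ via continuity of the semi-flow, then with $s_n\to\infty$ invoke the compactness theory of~\cite{weber:2013b} to produce a broken heat flow trajectory from $x$ to $y$ with non-constant pieces, and finally derive a contradiction from the Morse--Smale index-drop along each piece. The only cosmetic differences are that you spell out the dimension count behind the index drop and treat the case $c_x=c_y$ separately, whereas the paper absorbs both into the statement ``$\ell\ge 1$ with $\partial_s u_k\not\equiv 0$'' cited from~\cite[Prop.~3, Le.~4]{weber:2013b}.
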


\begin{hypothesis}\label{hyp:rho-eps-tau}
Assume the perturbation $\Vv$
satisfies~{\rm (V0)--(V3)}
in~\cite{weber:2013b}
and the Morse-Smale condition holds
below the regular level $a$ of $\Ss_\Vv$.
\begin{enumerate}
\item[(H5)]
Fix a constant $\rho=\rho(a)>0$
sufficiently small such 
that~(\ref{eq:rho}) and~(\ref{eq:34}) hold true
and such that for each critical point
$x\in\Crit^a$ the local coordinate chart
$(\Phi,\Phi(B^u\times B^+))$ about
$x\in\Lambda M$ covers the ball $B_x^{2\rho}$.
Here $B^u\times B^+\subset X^-\oplus X^+$
is a product of balls contained in $\Bb_{\rho_0}$
with $B^u\subset W^u$; see
Hypothesis~\ref{hyp:local-setup-N}~(H1).
Pick constants $\eps>0$ sufficiently small
and $\tau>0$ sufficiently
large\footnote{
   In the notation
   of Theorem~\ref{thm:Conley-pair}
   pick $\eps\in(0,\mu(a)]$ and
   $\tau>\tau_0(a)$.
   }
such that for each $x\in\Crit^a$
Theorem~\ref{thm:inv-fol}
(Invariant stable foliation)
and Theorem~\ref{thm:Conley-pair} (Conley pair)
hold true. In particular, every $x\in\Crit^a$
admits a Conley pair, namely
$(N_x,L_x)=(N_x^{\eps,\tau},L_x^{\eps,\tau})$
defined by~(\ref{eq:Conley-set-NEW})
and~(\ref{eq:L_x}). By Theorem~\ref{thm:inv-fol}
part~d) we assume that $N_x\subset B_x^\rho$.
Consequently $N_x\cap N_y=\emptyset$
whenever $x\not= y$.
\end{enumerate}
\end{hypothesis}

From now on we assume
Hypothesis~\ref{hyp:rho-eps-tau}
and use the notation
\begin{equation}\label{eq:N_k}
     N_k
     :=\bigcup_{x\in\Crit^a_k} N_x,
     \qquad
     L_k
     :=\bigcup_{x\in\Crit^a_k} L_x,
     \qquad k\in\Z.
\end{equation}
By definition a union over the empty set
is the empty set. Since $N_x\subset B_x^\rho$
both unions are unions of disjoint sets
by~(\ref{eq:rho}).
We denote the {\bf maximal Morse index}
among the critical points
{\bf\boldmath below level $a$} by
\begin{equation}\label{eq:max-morse-index}
     m=m(a):=\max_{x\in\Crit^a} \IND_\Vv(x).
\end{equation}
Observe that $\Crit_0^a\not=\emptyset$
since the action $\Ss_\Vv$ is bounded below.
For such a critical point $x$ of Morse index $0$
the Conley index pair $(N_x,L_x)$
consists of the ascending disk
$N_x=N_x(x)=W^s_\eps(x)$
by Theorem~\ref{thm:inv-fol} part~a)
and the empty exit set $L_x=\emptyset$.
Note that the ascending disk
$
     W^s_\eps(x):=W^s(x)\cap\{\Ss_\Vv<\Ss_\Vv(x)+\eps\}
$
is open and semi-flow invariant.
Hence $N_0$ is a finite union
of (open and semi-flow invariant)
disjoint ascending
disks and $L_0=\emptyset$.
Next observe that for each $T>0$ the set
$
     F_0=F_0(T):={\varphi_T}^{-1}N_0
$
is semi-flow invariant. By continuity
of $\varphi_T$ it is also open.
Assume $k>0$ is the next larger realized Morse
index, that is $k$ is the minimal Morse
index among the elements of
$\Crit^a\setminus\Crit^a_0$.
Consider the unstable manifold
of a critical point $x_k$ of Morse index $k$.
Each element $\gamma\not= x_k$
moves in finite time $T_\gamma$ into the
neighborhood $N_0$ of $\Crit_0$
by existence of the asymptotic forward
limit~\cite[Thm.~9.14]{weber:2010a-link}.
The Morse-Smale condition
guarantees that the Morse index of the
asymptotic forward limit is strictly less than $k$,
thus indeed zero by minimality of $k$.
Hence $\gamma\in {\varphi_{T_\gamma}}^{-1}N_0$.
In fact, a much stronger
statement is true: There is a time $T_k>0$
such that the pre-image
${\varphi_{T_k}}^{-1}N_0$ contains
all elements $\gamma$ of the
infinite dimensional exit set $L_k$ of $N_k$.

\begin{proposition}[Uniform time]
\label{prop:entrance-time}
Given Hypothesis~\ref{hyp:rho-eps-tau},
suppose $A$ is an open semi-flow invariant subset
of $\Lambda^a M$ containing all critical points 
of Morse index less or equal to $k$ and no others.
In the case $k<m(a)$
there is a time $T_{k+1}\ge 0$ such that 
$L_{k+1}\subset {\varphi_{T_{k+1}}}^{-1}A$.
If $L_{k+1}=\emptyset$, set $T_{k+1}:=0$.
In the case $k=m(a)$ of maximal Morse index
there is a time $T_{m+1}\ge0$ such that
$
     \Lambda^a M
     ={\varphi_{T_{m+1}}}^{-1}A
$.
\end{proposition}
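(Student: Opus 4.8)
The plan is to argue in two stages: first that \emph{every} point whose orbit matters enters $A$ after \emph{some} finite time, and then that this time can be chosen \emph{uniformly}. \textbf{Stage 1 (pointwise entry).} By \cite[Thm.~9.14]{weber:2010a-link} every $\gamma\in\Lambda^a M$ has a forward limit $y(\gamma):=\lim_{s\to\infty}\varphi_s\gamma\in\Crit^a$, since $\Ss_\Vv$ is bounded below and satisfies Palais--Smale. If $k=m(a)$ and $\gamma\in\Lambda^a M$ is arbitrary, then $\IND_\Vv(y(\gamma))\le m(a)=k$, so $y(\gamma)\in A$. If $k<m(a)$ and $\gamma\in L_{k+1}$, say $\gamma\in L_x$ with $x\in\Crit^a_{k+1}$ and $c:=\Ss_\Vv(x)$, then the defining inequality $\Ss_\Vv(\varphi_{2\tau}\gamma)\le c-\eps<c$ in~(\ref{eq:L_x}) gives $y(\gamma)\neq x$; moreover the orbit of $\gamma$ eventually enters $B_{y(\gamma)}^\rho$, while $\gamma\in N_x\subset B_x^\rho$ by Hypothesis~\ref{hyp:rho-eps-tau}~(H5), so Lemma~\ref{le:34} forces $\IND_\Vv(y(\gamma))<\IND_\Vv(x)=k+1$ and again $y(\gamma)\in A$. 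In either case $A$ is open and $\varphi_s\gamma\to y(\gamma)\in A$, hence $\varphi_s\gamma\in A$ for all large $s$; and semi-flow invariance of $A$ gives the monotonicity $\varphi_s^{-1}A\subseteq\varphi_{s'}^{-1}A$ for $s\le s'$, so once an orbit enters $A$ it stays there.

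\textbf{Stage 2 (uniformity, by contradiction).} Suppose no uniform time exists. Then for each $n$ there is $\gamma_n$ (in $L_{k+1}$, resp.\ in $\Lambda^a M$) with $\varphi_n\gamma_n\notin A$, hence $\varphi_s\gamma_n\notin A$ for all $s\in[0,n]$ by the monotonicity above. The key input is the smoothing property of the heat semi-flow: for every $\sigma>0$ the map $\varphi_\sigma$ carries bounded subsets of $\Lambda^a M$ to precompact subsets of $\Lambda M$ (parabolic a priori estimates, the source being the same as for the Lipschitz property, \cite[Thm.~9.15]{weber:2010a-link} and \cite{weber:heat-book}); note $\Lambda^a M$ is itself bounded in $W^{1,2}$ since $M$ is closed and $\Vv$ is bounded. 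In the maximal case this finishes the argument at once: a subsequence $\varphi_1\gamma_{n_j}$ converges in $W^{1,2}$ to some $w$, and for every fixed $s\ge0$ one has $\varphi_s w=\lim_j\varphi_{s+1}\gamma_{n_j}$, which lies in the closed set $\Lambda^a M\setminus A$ once $j$ is large; thus the whole forward orbit of $w$ avoids $A$, forcing $y(w)\notin A$ and contradicting Stage~1.

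\textbf{Stage 2, case $k<m(a)$.} Passing to a subsequence we may assume all $\gamma_n$ lie in a single $L_{x_0}$, $x_0\in\Crit^a_{k+1}$, $c_0:=\Ss_\Vv(x_0)$ (using finiteness of $\Crit^a$). By Theorem~\ref{thm:inv-fol}~(a) together with Proposition~\ref{prop:N-L} each $\gamma_n$ lies on a leaf of $L_{x_0}$, so there are $T_n\in(\tau,2\tau]$ and $\gamma^{(n)}\in S^u_\eps(x_0)$ with $\varphi_{T_n}\gamma_n\in\overline{\Dd_{\gamma^{(n)}}(x_0)}\subset\{\Ss_\Vv=c_0-\eps\}$; shrinking $\eps$ and $r$ in Hypothesis~\ref{hyp:local-setup-N}~(H4) we may also assume $\overline{\Dd(x_0)}\subset B_{x_0}^\rho$. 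Choose a further subsequence with $T_{n_j}\to T_*$ and $\varphi_\tau\gamma_{n_j}\to w$ in $W^{1,2}$ (smoothing property applied to the bounded set $B_{x_0}^\rho\supset L_{x_0}$); then $w':=\varphi_{T_*-\tau}w=\lim_j\varphi_{T_{n_j}}\gamma_{n_j}$ by continuity of $\varphi$, so $w'\in B_{x_0}^\rho$ and $\Ss_\Vv(w')=c_0-\eps$. As in the maximal case the forward orbit of $w$, hence of $w'$, avoids $A$, so $y(w')\notin A$ and $\IND_\Vv(y(w'))\ge k+1$. On the other hand $\Ss_\Vv(y(w'))\le c_0-\eps<c_0$ gives $y(w')\neq x_0$, and $w'$ is not a critical point (otherwise $w'\in B_{x_0}^\rho\cap\Crit=\{x_0\}$, impossible since $\Ss_\Vv(w')\neq c_0$), so the \emph{non-constant} orbit of $w'$ connects $B_{x_0}^\rho$ to $B_{y(w')}^\rho$ with $x_0\neq y(w')$; Lemma~\ref{le:34} then yields $\IND_\Vv(y(w'))<\IND_\Vv(x_0)=k+1$, a contradiction. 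This produces the uniform time $T_{k+1}$ (resp.\ $T_{m+1}$); by the monotonicity in Stage~1 it may be enlarged at will, and one sets $T_{k+1}=0$ when $L_{k+1}=\emptyset$.

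\textbf{Main obstacle.} Stage~1 is soft. The real work is the passage from pointwise to uniform entry in Stage~2, and specifically the need to relocate a subsequential limit of the orbits \emph{inside} a fixed coordinate ball $B_{x_0}^\rho$ so that the neighbourhood version of Morse--Smale (Lemma~\ref{le:34}) applies. What makes this possible is the combination of the leaf description of $L_{k+1}$ from Theorem~\ref{thm:inv-fol} --- every point of $L_{k+1}$ reaches the tube $\Dd$ within the \emph{uniform} time $2\tau$ --- with the compactness supplied by parabolic smoothing, without which the non-compact exit sets $L_{k+1}$ and sublevel sets $\Lambda^a M$ would leave the argument stranded.
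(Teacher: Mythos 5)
Your proof takes a genuinely different route from the paper's. The paper runs a single by-contradiction argument based on energy dissipation: it forms the open set $U := A \cup (N_x\setminus L_x) \cup \bigcup_{y\in\Crit^a_{\ge k+1}\setminus\{x\}} N_y$, uses Palais--Smale to get $\kappa := \inf\{\|\grad\Ss_\Vv(\gamma)\|_2 : \gamma\in\Lambda^a M\setminus U\} > 0$, shows that the offending orbit segments $\varphi_{[0,s_\nu]}\gamma^\nu$ stay outside $U$ (via positive invariance of $L_x$ and Lemma~\ref{le:34}), and integrates $\tfrac{d}{ds}\Ss_\Vv(\varphi_s\gamma^\nu)=-\|\grad\Ss_\Vv(\varphi_s\gamma^\nu)\|_2^2\le-\kappa^2$ to obtain $\Ss_\Vv(\varphi_{s_\nu}\gamma^\nu)\le a-\kappa^2 s_\nu\to-\infty$, contradicting the lower bound $\Ss_\Vv\ge -C_0$. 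That argument never extracts a limit orbit, so it needs no compactness of the semi-flow at all. Your version replaces the energy estimate by a soft pointwise-entry step and a subsequential-limit step.

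The gap is in your Stage~2. The load-bearing claim is that $\varphi_\sigma$, for $\sigma>0$, carries bounded subsets of $\Lambda^a M$ to precompact subsets of $\Lambda M$. You source this to the same references the paper invokes for Lipschitz continuity of the time-$\sigma$-map, but \cite[Thm.~9.1.5]{weber:2010a-link} (and the mild improvements the paper cites) are Lipschitz estimates, not compactness results. Precompactness of the time-$\sigma$-map is a genuine parabolic-smoothing statement --- gain of regularity plus a compact Sobolev embedding --- which is quite plausibly true in this setting (cf.\ Henry~\cite{henry:1981a}), but it is a separate fact, it is the crux of your argument, and as written it is unsupported by what you cite. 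The paper's route sidesteps the issue entirely, using only Palais--Smale, the lower action bound, and Morse--Smale on neighborhoods, all of which are already in place. A secondary point: in the $k<m(a)$ case you re-shrink $\eps$ and $r$ to arrange $\overline{\Dd(x_0)}\subset B_{x_0}^\rho$, but those constants were frozen once and for all in Hypothesis~\ref{hyp:rho-eps-tau}~(H5); it is cleaner to use $L_{x_0}\subset N_{x_0}\subset B_{x_0}^\rho$ directly and track the uniform leaf-exit times in $(\tau,2\tau]$ inside $N_{x_0}$, avoiding any retuning of the global constants.
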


\subsubsection*{Definition of the Morse filtration}
The first step in the construction
of the Morse filtration $\Ff=\left( F_k\right)_{k\in\Z}$
associated to $\Ss_\Vv:\Lambda^a M\to\R$ is
to set $F_k:=\emptyset$ whenever $k<0$.
Now consider the time $T_1$ given by
Proposition~\ref{prop:entrance-time}
for $A=N_0$. It provides the crucial inclusion
$$
     L_1\subset{\varphi_{T_1}}^{-1}N_0=:F_0
$$
illustrated by Figure~\ref{fig:fig-Morse-filtration}.
\begin{figure}
  \centering
  \includegraphics{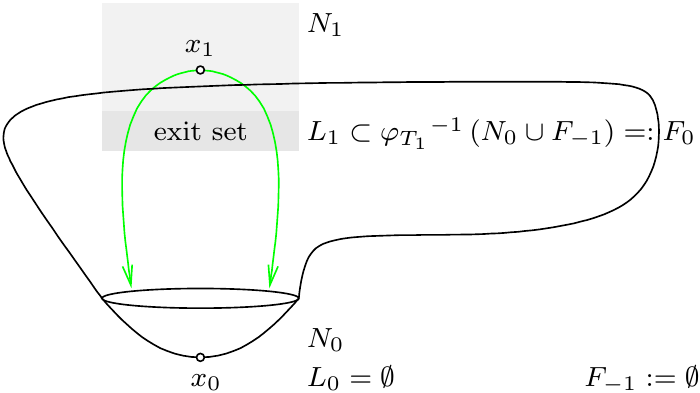}
  \caption{
           Morse filtration $\Ff=\left(\emptyset\subset
           F_0\subset
           F_1\subset\dots\subset F_{m}=\Lambda^a M\right)$
           }
  \label{fig:fig-Morse-filtration}
\end{figure}
Because the exit set $L_1$ of $N_1$ is contained 
in the semi-flow invariant set $F_0$,
the union $N_1\cup F_0$ is semi-flow invariant as well.
Trivially it is also open.
Next consider the time $T_2$ provided by
Proposition~\ref{prop:entrance-time}
for $A=N_1\cup F_0$. Hence
$$
     L_2\subset{\varphi_{T_2}}^{-1}\left(N_1\cup F_0\right)=:F_1
$$
and $F_1$ is open and semi-flow invariant
by the same reasoning as above.
Note that if there are no critical points
of Morse index $1$, then
$F_1={\varphi_{0}}^{-1}(\emptyset\cup F_0)=F_0$.
Proceeding iteratively we obtain a sequence of
open semi-flow invariant subsets
$$
     \emptyset=F_{-1}\subset F_0\subset F_1\subset\ldots\subset
     F_m=\Lambda^a M.
$$
More precisely, recalling that
$\varphi_T:\Lambda^a M\to\Lambda ^a M$
for any $T\ge 0$
we set
\begin{equation}\label{eq:F_k}
     F_{k}
     :={\varphi_{T_{k+1}}}^{-1}\left(N_k\cup F_{k-1}\right)
     \supset L_{k+1},\qquad
     k=0,\ldots,m-1,
\end{equation}
and
\begin{equation}\label{eq:F_m}
     F_m
     :={\varphi_{T_{m+1}}}^{-1}\left(N_m\cup F_{m-1}\right)
     =\Lambda^a M.
\end{equation}
Here $T_{k+1}$ is the time associated by
Proposition~\ref{prop:entrance-time}
to the set $A=N_k\cup F_{k-1}$.
Note that if there are no critical points
whose Morse index is $k$ or $k+1$,
then $F_k=F_{k-1}$ and $F_{k+1}={\varphi_{T_{k+2}}}^{-1}(F_{k-1})$.
Set $F_\ell:=\Lambda^a M$ whenever $\ell>m$.

\subsubsection*{Proofs}

The proof of Theorem~\ref{thm:cellular-filtration}
uses Proposition~\ref{prop:entrance-time} (Uniform time)
which relies on Lemma~\ref{le:34} (Morse-Smale on
neighborhoods). So we start with the

\begin{proof}[Proof of Lemma~\ref{le:34}
(Morse-Smale on neighborhoods)]
Assume the lemma is not true. 
Then there are critical points $x\not= y$
below level $a$ with $\IND_\Vv(x)\le\IND_\Vv(y)$,
sequences of constants
$\rho_\nu\searrow 0$ and $s_\nu\ge0$,
and a sequence of loops $\gamma^\nu\in B_x^{\rho_\nu}$
such that $\varphi_{s_\nu}\gamma^\nu\in B_y^{\rho_\nu}$.
Thus $\gamma^\nu$ converges to $x$ and 
$\varphi_{s_\nu}\gamma^\nu$ to $y$
in the $W^{1,2}$ topology, as $\nu\to\infty$.
Moreover, it follows that
$s_\nu\to\infty$, as $\nu\to\infty$.
To see the latter assume by contradiction
that the sequence $s_\nu$ is bounded.
Then there is a subsequence, still denoted by $s_\nu$,
such that $s_\nu$ converges to a constant $T\ge 0$.
By continuity of the semi-flow $\varphi$
we conclude that $\varphi_{s_\nu}\gamma^\nu$
converges in $W^{1,2}$ to $\varphi_T x$,
as $\nu\to\infty$. But $\varphi_T x=x$ since
critical points are fixed points.
Since $\varphi_{s_\nu}\gamma^\nu$
converges also to $y$ in $W^{1,2}$
we obtain the contradiction $x=y$.

Now consider the sequence of heat flow trajectories
$u^\nu:[0,s_\nu]\times S^1\to M$,
$$
     u^\nu(s,t):=\left(\varphi_s\gamma^\nu\right)(t).
$$
Since the action is nonincreasing
along heat flow trajectories and since
$\gamma^\nu\in B_x^{\rho_\nu}\subset \Lambda^a M$
it follows that
$$
     \max_{s\in[0,s_\nu]}\Ss_\Vv\left( u^\nu(s,\cdot)\right)
     \le \Ss_\Vv\left(\gamma^\nu \right)
     \le a.
$$
So we have a uniform action bound
on compact subcylinders of $[0,\infty)\times S^1$
for the sequence $u^\nu$ of heat flow trajectories.
By the arguments used to
prove~\cite[Prop.~3]{weber:2013b}
(Convergence on compact sets)
and~\cite[Le.~4]{weber:2013b}
(Compactness up to broken trajectories)
we obtain critical points $x=x_0,\dots,x_\ell=y$,
where $\ell\ge 1$,
and for each $k\in\{1,\ldots,\ell\}$
a connecting trajectory
$u_k\in\Mm(x_{k-1},x_k;\Vv)$
with $\p_s u_k\not\equiv 0$.
By the Morse-Smale condition
the Morse index of $x_k$ is strictly smaller than
the Morse index of $x_{k-1}$.
Thus $\IND_\Vv(x_0)>\IND_\Vv(x_m)$.
Contradiction.
\end{proof}

\begin{remark}\label{rmk:L2-grad}
The action functional 
$\Ss_\Vv:\Lambda M\to\R$,
$\gamma\mapsto\frac12\norm{\dot\gamma}_2^2
-\Vv(\gamma)$, is continuously differentiable.
To see this observe that
$$
     d\Ss_\Vv(\gamma)\xi
     =\langle\dot\gamma,\Nabla{t}\xi\rangle_{L^2}
     -\langle\grad\Vv(\gamma),\xi\rangle_{L^2}
$$
for all $\gamma\in\Lambda M$ and
$\xi\in W^{1,2}(S^1,\gamma^*TM)$.
Continuity of the first term is obvious and
for the second term it
follows from axioms~{\rm (V0)--(V1)}.
By definition the $L^2$-gradient of $\Ss_\Vv$
is determined by the identity
$
     d\Ss_\Vv(\gamma)\xi
     =\langle\grad\Ss_\Vv(\gamma),
     \xi\rangle_{L^2}
$
for all $\gamma\in\Lambda M$
and $\xi\in W^{1,2}(S^1,\gamma^*TM)$.
If $\gamma$ is of higher regularity $W^{2,2}$,
then we can carry out integration by parts
and $\grad\Ss_\Vv$ becomes a continuous section
of the Hilbert space bundle over $W^{2,2}(S^1,M)$
whose fiber over $\gamma$
is given by the Hilbert space $L^2(S^1,\gamma^*TM)$
of $L^2$ vector fields along $\gamma$.
In this case we obtain the explicit representation
$$
     \grad\Ss_\Vv(\gamma)
     =-\Nabla{t}\p_t\gamma-\grad\Vv(\gamma)
$$
whenever $\gamma\in W^{2,2}(S^1,\gamma^*TM)$.
\end{remark}

\begin{proof}
[Proof of Proposition~\ref{prop:entrance-time} (Uniform time).]
Key ingredients will be
Palais-Smale, Morse-Smale on neighborhoods,
and the fact that the action functional $\Ss_\Vv$
is bounded from below.
Recall Hypothesis~\ref{hyp:rho-eps-tau} on the
choices of $\Vv$, $\rho$, $\eps$, and $\tau$.

Fix $k<m(a)$ and pick an open semi-flow 
invariant subset $A\subset\Lambda^a M$
which contains $\Crit^a_{\le k}$ but no other
critical points.
Assume $L_{k+1}\not= \emptyset$, 
otherwise we are done by setting $T_{k+1}=0$.
Now assume by contradiction that there
is no time $T\ge0$ such that 
$\varphi_T L_{k+1}\subset A$.
In this case there are sequences
of positive reals $s_\nu\to\infty$
and of elements $\gamma^\nu$ of $L_{k+1}$ such that
$\varphi_{s_\nu}\gamma^\nu\notin A$ for
every $\nu\in\N$.
Choosing subsequences, still denoted by 
$s_\nu$ and $\gamma^\nu$, we may assume that
all $\gamma^\nu$ lie in the same
path connected component $L_x$ of $L_{k+1}$
for some $x\in\Crit^a_{k+1}$.
Here we use that $\Crit^a_{k+1}$ is a finite set since
$\Ss_\Vv$ is Morse below level $a$;
see~\cite {weber:2002a}.

Now consider the open neighborhood of
$\Crit^a$ in $\Lambda^a M$ defined by
$$
     U:=A\cup\left(N_x\setminus L_x\right)
     \cup 
     \bigcup_{y\in\Crit^a_{\ge k+1}\setminus\{x\}}
     N_y. 
$$
Indeed $A$ is open by assumption and so are the
neighborhoods $N_x$ and $N_x\setminus L_x$ of $x$
by Theorem~\ref{thm:Conley-pair}
and Definition~\ref{def:index-pair} of a Conley pair.
Note that
$$
     \kappa
     :=\inf_{\gamma\in\Lambda^a M\setminus U}
     \Norm{\grad\Ss_\Vv(\gamma)}_2
     >0
$$
is strictly positive.
To see this assume by contradiction
that $\kappa=0$. Then there is a sequence
$z^i$ in $\Lambda^a M\setminus U$
such that $\norm{\grad\Ss_\Vv(z^i)}_2\to 0$,
as $i\to\infty$.
So by Palais-Smale a subsequence
converges to some critical point 
in the closed set $\Lambda^a M\setminus U$.
But all critical points below level $a$ lie in the
open set $U$. Contradiction.

None of the elements
$\varphi_{s_\nu}\gamma^\nu$ of $\Lambda^a M$
lies in $U$: Indeed $\varphi_{s_\nu}\gamma^\nu\notin A$
by assumption. Furthermore, such an element cannot
lie in the union of the $N_y$'s,
because otherwise we would have a flow line
from $N_x\subset B_x^\rho$ to $N_y\subset B_y^\rho$
thereby contradicting Lemma~\ref{le:34}
(Morse-Smale on neighborhoods)
since $\IND_\Vv(x)\le\IND_\Vv(y)$.
It remains to check that
$\varphi_{s_\nu}\gamma^\nu\notin N_x\setminus L_x$.
To see this set $c:=\Ss_\Vv(x)$ and
recall that $\gamma^\nu$ lies in $L_x$
which is positively invariant in $N_x$ by
Definition~\ref{def:index-pair}~(iii).
Assume that the semi-flow trajectory
through $\gamma^\nu$ leaves $L_x$,
thus simultaneously $N_x$, say at a time $s_*$.
(Otherwise, if it stayed inside $L_x$ forever,
we are done.)
By definition of $N_x=N_x^{\eps,\tau}$
and the downward gradient property
the point $\varphi_{s_*}\gamma^\nu$
reaches the action level $c-\eps$
precisely after time $\tau$, that is
$\Ss_\Vv(\varphi_\tau(\varphi_{s_*}\gamma^\nu))
=c-\eps$. Since the action decreases along
heat flow trajectories we conclude that
$\Ss_\Vv(\varphi_\tau(\varphi_{s_*+s}\gamma^\nu))
\le c-\eps$ whenever $s\ge 0$.
Thus the semi-flow line through
$\varphi_{s_*}\gamma^\nu$ cannot
re-enter $N_x$ (nor its subset $L_x$).
To summarize we know that
$\varphi_{[0,s_*)}\gamma^\nu\subset L_x$ and
$\varphi_{[s_*,\infty)}\gamma^\nu\cap N_x=\emptyset$.
But this proves that 
$\varphi_{[0,\infty)}\gamma^\nu\cap 
\left( N_x\setminus L_x\right)=\emptyset$.

More generally, it even holds that
$\varphi_s\gamma^\nu\notin U$
whenever $s\in[0,s_\nu]$ and $\nu\in\N$:
Indeed $\varphi_s\gamma^\nu$
cannot lie in $A$, since $A$ is semi-flow
invariant by assumption and
$\varphi_{s_\nu}\gamma^\nu\notin A$. That
$\varphi_s\gamma^\nu\notin N_x\setminus L_x$
has been proved in the previous paragraph.
The statement for the union of the $N_y$'s follows by
the same Morse-Smale argument given
in the previous paragraph for $s=s_\nu$.

To finally derive a contradiction
use the fact that $\varphi_s$ is the semi-flow
generated by the negative $L^2$-gradient of
$\Ss_\Vv$ to obtain that
\begin{equation*}
\begin{split}
     \Ss_\Vv(\gamma^\nu)-\Ss_\Vv(\varphi_{s_\nu}\gamma^\nu)
    &=\int_{s_\nu}^0\frac{d}{ds}
     \Ss_\Vv(\varphi_s\gamma^\nu)\, ds\\
    &=\int_{s_\nu}^0d\Ss_\Vv|_{\varphi_s(\gamma^\nu)}
      \circ \left(\frac{d}{ds} \varphi_s\gamma^\nu\right)
      ds\\
    &=\int_0^{s_\nu}
     \Norm{\grad\Ss_\Vv
     \left(\varphi_s\gamma^\nu\right)}_2^2
     \, ds\\
    &\ge \kappa^2s_\nu
\end{split}
\end{equation*}
where the inequality uses the definition of $\kappa$
and the fact that $\varphi_s\gamma^\nu\notin U$
whenever $s\in[0,s_\nu]$.
Since $\kappa>0$, we get that
$$
     \Ss_\Vv(\varphi_{s_\nu}\gamma^\nu)
     \le\Ss_\Vv(\gamma^\nu)-\kappa^2s_\nu
     \le a-\kappa^2s_\nu\longrightarrow-\infty,\quad
     \text{as $\nu\to\infty$.}
$$
But this contradicts
the fact that $\Ss_\Vv$
is bounded from below by $-C_0$ where
$C_0$ is the constant in axiom~{\rm (V0)}.
This concludes the proof of the
case $k<m$.

In the case $k=m$
pick an open semi-flow
invariant subset $A\subset\Lambda^a M$
which contains $\Crit^a$.
Assume by contradiction
that there is no time $T\ge 0$
such that $\varphi_T(\Lambda^a M)\subset A$.
Then there are sequences $s_\nu\to\infty$
and $\gamma^\nu$ in
$(\Lambda^a M)\setminus A $ such that
$\varphi_{s_\nu}\gamma^\nu\notin A$ for $\nu\in\N$.
Now repeat for the much simpler $U:=A$ the
argument given in the case $k<m$.
This proves
Proposition~\ref{prop:entrance-time}.
\end{proof}

\begin{proof}[Proof of Theorem~\ref{thm:cellular-filtration} (Morse
  filtration and chain group isomorphism)]
First we pick an integer $k\in\{0,\ldots,m(a)\}$
where $m(a)$ is the maximal Morse
index~(\ref{eq:max-morse-index}) among
the (finitely many) elements of $\Crit^a$.
Observe that a set
{\bf\boldmath $A$ is semi-flow invariant},
that is $\varphi_T A\subset A$ for every time
$T\ge 0$, if and only if $A\subset {\varphi_T}^{-1}(A)$
for every time $T\ge 0$.
This observation for $A=N_k\cup F_{k-1}$
and the definition of $F_k$,
see~(\ref{eq:F_k}) and~(\ref{eq:F_m}), show that
\begin{equation}\label{eq:inclusion-F_k}
     F_{k-1}\subset\left(N_k\cup F_{k-1}\right)
     \subset {\varphi_{T_{k+1}}}^{-1}
     \left(N_k\cup F_{k-1}\right)
     =:F_k.
\end{equation}
This proves~(i)
in Definition~\ref{def:cellular-filtration}
of a cellular filtration.
Because $F_m=\Lambda^a M$ by~(\ref{eq:F_m}),
condition~(ii) is obviously true.
Thus to prove that $\Ff(\Lambda^a M)=(F_k)$
is a cellular filtration of $\Lambda^a M$ 
it remains to verify condition~(iii)
in Definition~\ref{def:cellular-filtration}.

Putting together the individual
isomorphisms given by~(\ref{eq:nat-iso})
for each critical point $x$
provides the isomomorphism
\begin{equation*}
\begin{split}
     \Theta_k:
     \CM_k^a(\Ss_\Vv)
   &\to 
     \bigoplus_{x\in\Crit^a_k}\Ho_k(N_x,L_x)
   \\
     \langle x\rangle
   &\mapsto
     \Bigl(0,\dots,0,
     \underbrace{
         \left(\iota\circ\vartheta^x\right)_*
         (\sigma_{\langle x\rangle} a_k)
     }_{\text{$=[D^u_{\langle x\rangle}]$
         by~(\ref{eq:vartheta-or})}}
     ,0,\dots,0\Bigr)
\end{split}
\end{equation*}
between abelian groups.
It is well defined since
$\sigma_{\langle x\rangle}\in\{\pm 1\}$
defined by~(\ref{eq:sign-vartheta})
changes sign when replacing the
orientation $\langle x\rangle$ of the
unstable manifold of $x$ by the opposite
orientation $-\langle x\rangle$.

By~(\ref{eq:inclusion-F_k})
and~(\ref{eq:F_k}) there is the inclusion of pairs
$\iota:(N_k,L_k)\hookrightarrow(F_k,F_{k-1})$.
Further below we will prove that it
induces an isomorphism on homology
\begin{equation}\label{eq:iso-F_k}
     \iota_*:\Ho_*(N_k,L_k)
     \stackrel{\cong}{\longrightarrow}
     \Ho_*(F_k,F_{k-1}).
\end{equation}
Recall from~(\ref{eq:N_k})
that $N_k=\cup_x N_x$
is a union of disjoint subsets.
Therefore
$$
     \oplus \iota^x_*:
     \bigoplus_{x\in\Crit^a_k}\Ho_\ell(N_x,L_x)
     \stackrel{\cong}{\longrightarrow}
     \Ho_\ell(N_k,L_k)
$$
is an isomorphism for each $\ell\in\Z$; see
e.g.~\cite[III Proposition~4.12]{dold:1995a}.
Now if $\ell\not= k$, then (each summand of)
the left hand side is zero by
Theorem~\ref{thm:Conley-index}.
Hence $\Ho_\ell(F_k,F_{k-1})=0$
by~(\ref{eq:iso-F_k}), that is condition~(iii)
in Definition~\ref{def:cellular-filtration}
holds true, and $\Ff(\Lambda^a M)=(F_k)$ is a
cellular filtration of $\Lambda ^a M$.
If $\ell=k$, then again by
Theorem~\ref{thm:Conley-index} each
group $\Ho_k(N_x,L_x)$ is generated
by the homology class of the disk
$D^u_x\subset W^u(x)$.
By~(\ref{eq:iso-F_k})
this shows that $\Ff(\Lambda^a M)$
is a Morse filtration.

Next assume $b\le a$ is also a regular value.
It's a first impulse to take as
$\Ff(\Lambda^b M)=(F_k^b)$ the sequence
of intersections $(F_k\cap \Lambda^b M)$.
But then how to calculate $\Ho_\ell
(F_k\cap\Lambda^b M,
F_{k-1}\cap\Lambda^b M)$?
Let's start differently with the simple
observations that $\Crit^b\subset\Crit^a$
and that the sets $N_k$ and $L_k$ defined
by~$($\ref{eq:N_k}$)_a$ contain, respectively,
the sets $N_k^b$ and $L_k^b$ given
by~$($\ref{eq:N_k}$)_b$.
Now define the sets
\begin{equation}\label{eq:F_k-b}
     \Ff(\Lambda^b M)=\left( F_k^b\right)
\end{equation}
iteratively by~$($\ref{eq:F_k}$)_b$
using the sets $N_k^b$ and $F_{k-1}^b$
and taking pre-images with respect
to the semi-flow on $\Lambda^b M$.
However, concerning the new times $T_{k+1}^b$
observe that setting $T_{k+1}^b$ equal to
the \emph{old time} $T_{k+1}=T_{k+1}(a)$
is absolutely fine to satisfy the crucial
condition $F_k^b\supset L_{k+1}^b$.
The proof that $\Ff(\Lambda^b M)$
defined this way is a Morse filtration
is no different from the proof
for $\Ff(\Lambda^a M)$.\footnote{
  Note that the sets $F_k^b$
  are equal to the intersections
  $F_k\cap\Lambda^b M$...
  }

To complete the proof it remains to
establish the isomorphism~(\ref{eq:iso-F_k}).
Similarly as in~(\ref{eq:idea-incl})
the idea is to establish a number of
consecutive isomorphisms
\begin{equation}\label{eq:seq-isos}
\begin{split}
     \Ho_\ell(F_k,F_{k-1})
   &\stackrel{1}{\cong} 
     {\rm H}_\ell(N_k\cup F_{k-1},F_{k-1})
   \\
   &\stackrel{2}{\cong}
     {\rm H}_\ell(N_k,N_k\cap F_{k-1})
   \\
   &\stackrel{3}{\cong} 
     \Ho_\ell(N_k,L_k)
\end{split}
\end{equation}
and show that each generator $[D^u_x]$ is invariant
under the composition of these isomorphisms.
So the image under $\iota_*$
of any basis of $\Ho_*(N_k,L_k)$ consisting
of such elements $[D^u_x]$, one for
each $x\in\Crit^a_k$,
is an isomorphic image of that same basis.
Hence $\iota_*$ takes bases in bases and
therefore it is an isomorphism;
cf.~(\ref{eq:idea-incl}).
\begin{figure}
  \centering
  \includegraphics{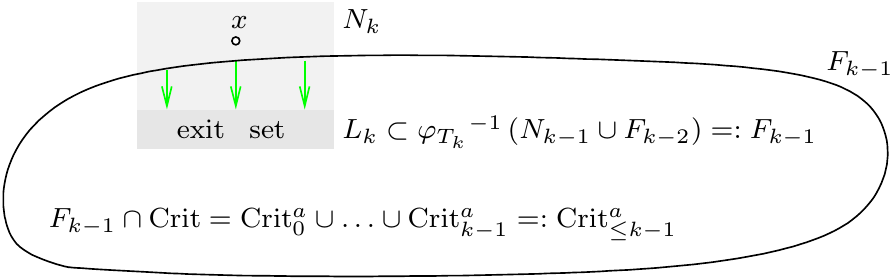}
  \caption{
           The sets $L_k\subset N_k$ and $F_{k-1}$
           }
  \label{fig:fig-set-N_k-and-F_k-1}
\end{figure}

{\bf The first isomorphism} uses the fact
that the open semi-flow invariant sets
$$
     X:=F_k:={\varphi_{T_{k+1}}}^{-1}(N_k\cup F_{k-1})
     ,\qquad
     A:=N_k\cup F_{k-1},
$$
are homotopy equivalent:
Reciprocal homotopy
equivalences are given by
\begin{equation}\label{eq:homot-eq}
     r:X\to A,\;
     \gamma\mapsto\varphi_{T_{k+1}}\gamma
     ,\qquad
     \iota:A\hookrightarrow
     X={\varphi_{T_{k+1}}}^{-1}(A),
\end{equation}
where $\iota$ denotes inclusion. Indeed
$\iota\circ r$ is homotopic to $id_X$ via the
homotopy $\{h_\lambda:X\to X,\gamma
\mapsto\varphi_{\lambda T_{k+1}}
\gamma\}_{\lambda\in[0,1]}$
and $r\circ \iota$ is homotopic to $id_A$ via the
homotopy $\{f_\lambda:A\to A,\gamma
\mapsto\varphi_{\lambda T_{k+1}}
\gamma\}_{\lambda\in[0,1]}$.
Now by homotopy equivalence of the sets $X$
and $A$ their singular homology groups are
isomorphic; see e.g. Corollary~5.3
in~\cite[III]{dold:1995a}.
Hence $\mathrm{H}_*(X,A)=0$ by
the homology sequence
of the pair $(X,A)$, see \emph{loc.~cit.}~(3.2),
and this implies the first isomorphism
(use the homology sequence
of the triple $B\subset A\subset X$ for $B=F_{k-1}$;
\emph{loc.~cit.}~(3.4)).

Alternatively, observe that $\iota$ and $r$ are
reciprocal homotopy equivalences as maps of
pairs $r:(X,B)\to (A,B)$ and $\iota:(A,B)\to (X,B)$
since both homotopies $h_\lambda$ and
$f_\lambda$ preserve the semi-flow invariant set
$B=F_{k-1}$. Thus the induced map on homology
$r_*:\Ho_*(X,B)\to\Ho_*(A,B)$ is an isomorphism
with inverse $\iota_*$; see e.g.
Corollary~5.3 in~\cite[Chapter~III]{dold:1995a}.

Since $r=\varphi_{T_{k+1}}$ leaves the parts
$\INT D^u_x$ of the disks $D^u_x$ outside $L_k$
invariant (as sets) it holds that
$[r(D^u_x)]=[D^u_x]$ as elements of
$\Ho_*(N_k,L_k)$.

{\bf The second isomorphism} uses 
the excision axiom.
Consider the topological space
$X:=N_k\cup F_{k-1}$ and its subset
$A:=F_{k-1}$ which is open in $X$ by
openness of $F_{k-1}$ in $\Lambda^a M$.
For the same reason $N_k$ is open in $X$.
Therefore $N_k\cap F_{k-1}$ is open in $X$.
Observe that
$$
     X=N_k\setminus(N_k\cap F_{k-1})
     \cup (N_k\cap F_{k-1})
     \cup F_{k-1}\setminus(N_k\cap F_{k-1})
$$
is a union of three disjoint sets of which the
second one is open. Thus the complement
of set two is closed and consists of the disjoint
sets one and three. Hence each of them is closed
in $X$. Note that set three is equal to
$B:=F_{k-1}\setminus N_k$.
Since $\cl B=B\subset A=\INT A$ we are in
position to apply the excision axiom
in order to cut off $B$ from $X$ and from $A$
without changing relative homology;
see Figure~\ref{fig:fig-set-N_k-and-F_k-1}.
and e.g. Corollary~7.4 in~\cite[III]{dold:1995a}.

Note that all disks $D^u_x$ are disjoint
from the cut off set $B$. Therefore excision
does not affect any of these disks.

{\bf The third isomorphism}
is based on the fact that there is a
strong deformation retraction
$r:A:=N_k\cap F_{k-1}\to L_k=:B$ as illustrated by
Figure~\ref{fig:fig-set-N_k-and-F_k-1}.
Hence the singular homology 
groups of $A$ and $B$ are isomorphic;
see e.g. Corollary~5.3 in~\cite[III]{dold:1995a}.
Thus $\mathrm{H}_*(A,B)=0$ by
the homology sequence of the pair
$(A,B)$, see \emph{loc.~cit.}~(3.2),
which implies existence of the third isomorphism
$\Ho_*(N_k,A)\cong\Ho_*(N_k,B)$
in~(\ref{eq:seq-isos}) --
to see this use the homology sequence
of the triple $B\subset A\subset N_k$;
see \emph{loc.~cit.}~(3.4).
Because $r$ is defined (below) by flowing
points forward until $L_k$ is reached,
the disks $D^u_x\subset W^u(x)$
are invariant (as sets) under $r$ and therefore
$[r(D^u_x)]=[D^u_x]$ as elements of
$\Ho_*(N_k,L_k)$.

To construct the strong deformation
retraction $r:A\to B$ consider the
{\bf entrance time function}
\begin{equation}\label{eq:entrance-time}
\begin{split}
     \Tt=\Tt_{L_k}:N_k\cap F_{k-1}
    &\to[0,\infty)
     \\
     \gamma
    &\mapsto
     \inf\{s\ge 0\mid\varphi_s\gamma\in L_k\}
\end{split}
\end{equation}
associated to the subset $L_k$ of $N_k\cap F_{k-1}$.
We use the convention $\inf\emptyset=\infty$.
Concerning the target $[0,\infty)$
as opposed to $[0,\infty]$ observe that
the semi-flow moves any element
$\gamma\in N_k\cap F_{k-1}$ into $L_k$ in
some finite time:
By~\cite[Thm.~9.14]{weber:2010a-link}
which uses that $\Ss_\Vv$ is Morse below
level $a$, the asymptotic forward limit
$$
     \gamma_\infty
     :=\lim_{s\to\infty}\varphi_s\gamma
     \in \Crit^a\cap F_{k-1}=\Crit^a_{\le k-1}
$$
exists and is some critical point below level $a$.
Concerning the right hand side we used that
$F_{k-1}$ is semi-flow invariant and contains
precisely the critical points (below level $a$)
of Morse index less or equal to $k-1$.
Hence $\gamma_\infty\notin N_k$, because
the critical points inside $N_k$ are of Morse index $k$.
This shows that the trajectory with initial point
$\gamma$ leaves $N_k$. But doing so it has to run
through the exit set $L_k$ of $N_k$ by
Definition~\ref{def:index-pair}~(iv). Thus the
entrance time $\Tt(\gamma)$ in $L_k$ is finite.

Note that the infimum in~(\ref{eq:entrance-time})
is actually taken on by (relative)
closedness of $L_k$.
Below we prove that $\Tt$ is continuous.
Consequently the map defined by
\begin{equation*}
\begin{split}
     r:A=N_k\cap F_{k-1}
   &\to L_k=B
   \\
     \gamma
   &\mapsto\varphi_{\Tt(\gamma)}\gamma
\end{split}
\end{equation*}
takes values in $B$ and is continuous.
But $r\circ \iota=id_B$ and
$\iota\circ r=h_1$ is homotopic to
$id_A=h_0$ via the homotopy $\{ h_\lambda:A\to A,\,
\gamma\mapsto\varphi_{\lambda \Tt(\gamma)}
\gamma\}_{\lambda\in[0,1]}$. Thus
$r$ is a strong deformation retraction
and it only remains to check continuity of
$\Tt$.\footnote{
  In such situations the Kat\v{e}tov-Tong
  insertion Theorem~\cite{katetov:1951a,tong:1952a}
  can be very useful: Given functions
  $u\le\ell:X\to\R$ on a normal
  topological space with $u$ upper
  and $\ell$ lower semi-continuous. 
  Then there exists a continuous function
  $f:X\to\R$ in between, that is
  $u\le f\le \ell$.
  }

{\boldmath\bf The entrance time
function $\Tt$ is continuous}:
Lemma~2.10 in~\cite{abbondandolo:2006a}
tells that the entrance time function
associated to a \emph{closed/open} subset
is \emph{lower/upper} semi-continuous.
Thus $\Tt=\Tt_{L_k}$ is lower
semi-continuous by closedness of $L_k$ in
$N_k\cap F_{k-1}$.
So it remains to prove upper semi-continuity.
Although $L_k$ is not open, it behaves like an open
set under the \emph{forward} semi-flow.
Namely, any element of $L_k$
remains inside $L_k$ for sufficiently small times
by openness of $N_k$ and because $L_k$ is
\emph{positively invariant} in $N_k$.
More precisely, choose
$\gamma_0\in N_k\cap F_{k-1}$ and $\delta>0$.
Recall from~(\ref{eq:N_k})
that $\gamma_0\in N_x\cap F_{k-1}$ for some
path connected component 
$N_x=N_x^{\eps,\tau}$ of $N_k$.
As we saw above $\Tt(\gamma_0)$ is finite
and $\varphi_{\Tt(\gamma_0)}\gamma_0$ lies
in the boundary of $L_x$ relative $N_x$, that is
$$
     \varphi_{\Tt(\gamma_0)}\gamma_0\in
     \p L_x=
     \left(\left(\varphi_{2\tau}\right)^{-1}
     \{\Ss_\Vv=c-\eps\}\right)
     \cap\{\Ss_\Vv<c+\eps\}
      ,\quad c:=\Ss_\Vv(x),
$$
although not yet in its interior
$$
     \INT L_x=
     \left(\left(\varphi_{(\tau,2\tau)}\right)^{-1}
     \{\Ss_\Vv=c-\eps\}\right)
     \cap\{\Ss_\Vv<c+\eps\}.
$$
By continuity of $\varphi$ there is a time
$T\in\left(\Tt(\gamma_0),\Tt(\gamma_0)+\delta\right)$
such that (the possibly small) forward flow segment
$\varphi_{[0,T]}\gamma_0$ is still contained in the
open subset $N_x\subset\Lambda^a M$.\footnote{
  Necessarily $T<\Tt(\gamma_0)+\tau$ since
  already $\varphi_{\Tt(\gamma_0)+\tau}\gamma_0
  =\varphi_\tau(\varphi_{\Tt(\gamma_0)}\gamma_0)$
  lies outside $N_x$.
  }
Thus $\varphi_T\gamma_0\in L_x$
by positive invariance of $L_x$ in $N_x$, see
Definition~\ref{def:index-pair}~(iii),
and $\varphi_T\gamma_0\in\INT L_x$
since $\Tt(\gamma_0)<T<\Tt(\gamma_0)+\tau$.
Thus by continuity of
$\varphi$ in the loop variable $\gamma$
there is a neighborhood $U$ of $\gamma_0$
in the open subset
$N_k\cap F_{k-1}\subset\Lambda^a M$
such that its image $\varphi_T(U)$ is contained
in the open neighborhood $\INT L_x$ of
$\varphi_T\gamma_0$ in $\Lambda^a M$.
Thus, given any $\gamma\in U$,
time $T$ lies in the set whose
infimum~(\ref{eq:entrance-time})
is $\Tt(\gamma)$ and therefore
\begin{equation}\label{eq:analog-args}
     \Tt(\gamma)
     \le T
     <\Tt(\gamma_0)+\delta.
\end{equation}
This shows that $\Tt$ is upper semi-continuous
at any $\gamma_0\in N_k\cap F_{k-1}$ and
concludes the proof that $\Tt$ is continuous.
The proof of
Theorem~\ref{thm:cellular-filtration}
is complete.
\end{proof}

\subsection{Cellular and singular homology}

\begin{theorem}\label{thm:cellular=singular}
Assume $\Ss_\Vv$ is Morse-Smale
below regular values $b\le a$ and
consider the Morse filtrations $\Ff(\Lambda^b M)
\hookrightarrow\Ff(\Lambda^a M)$ provided
by Theorem~\ref{thm:cellular-filtration}.
Then there are natural isomorphisms
\begin{equation}
\label{eq:isomorphism-cellular-singular}
     \Ho_*\Ff\left(\Lambda^b M\right)
     \cong \Ho_*\left(\Lambda^b M\right)
     ,\qquad
     \Ho_*\Ff\left(\Lambda^a M\right)
     \cong \Ho_*\left(\Lambda^a M\right)
\end{equation}
which commute with the inclusion induced
homomorphisms 
$\Ho_*\Ff\left(\Lambda^b M\right)
\to\Ho_*\Ff\left(\Lambda^a M\right)$ and
$\Ho_*\left(\Lambda^b M\right)\to
\Ho_*\left(\Lambda^a M\right)$.
\end{theorem}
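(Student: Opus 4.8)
The plan is to derive Theorem~\ref{thm:cellular=singular} as a formal consequence of the classical fact that the cellular homology of a cellular filtration is naturally isomorphic to singular homology, once the structural input is supplied by Theorem~\ref{thm:cellular-filtration}. Concretely, Theorem~\ref{thm:cellular-filtration}~a) provides that $\Ff(\Lambda^b M)=(F^b_k)$ and $\Ff(\Lambda^a M)=(F_k)$ are cellular filtrations in the sense of Definition~\ref{def:cellular-filtration} and that the inclusion $\iota:\Lambda^b M\hookrightarrow\Lambda^a M$ is a cellular map, i.e.\ $\iota(F^b_k)\subset F_k$ for all $k$ (in fact $F^b_k=F_k\cap\Lambda^b M$). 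Granting this, the isomorphisms~(\ref{eq:isomorphism-cellular-singular}) are two instances of the canonical isomorphism $\Ho_*\Ff(\Lambda)\cong\Ho_*(\Lambda)$, and the asserted compatibility with the inclusion induced homomorphisms is precisely its naturality under cellular maps; see~\cite[Section~V.1]{dold:1995a} or~\cite{milnor:1965a}.

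For completeness I would recall the construction of the canonical isomorphism, which in our situation needs no spectral sequence because the filtrations are \emph{finite}: $F_k=\emptyset$ for $k<0$ and $F_m=\Lambda^a M$, and likewise for $\Lambda^b M$. Condition~(iii) of Definition~\ref{def:cellular-filtration}, namely $\Ho_\ell(F_k,F_{k-1})=0$ for $\ell\neq k$, fed into the long exact sequences of the pairs $(F_k,F_{k-1})$, yields by a routine chase that $\Ho_\ell(F_{k-1})\to\Ho_\ell(F_k)$ is an isomorphism for $\ell>k$ and for $\ell<k-1$ and an epimorphism for $\ell=k-1$; together with $F_{-1}=\emptyset$ and $F_m=\Lambda^a M$ this gives $\Ho_\ell(F_{\ell-1})=0$ and $\Ho_\ell(F_{\ell+1})\cong\Ho_\ell(\Lambda^a M)$. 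Writing $\Co_\ell\Ff=\Ho_\ell(F_\ell,F_{\ell-1})$ and $\p_\ell^{trip}=j_*\circ\p$ as in~(\ref{eq:triple-boundary}), the same exact sequences (using also $\Ho_{\ell-1}(F_{\ell-2})=0$, which makes $j_*$ injective) identify the cycles $\ker\p_\ell^{trip}$ with the image of the injection $\Ho_\ell(F_\ell)\hookrightarrow\Co_\ell\Ff$ and the boundaries $\im\p_{\ell+1}^{trip}$ with $\ker\bigl(\Ho_\ell(F_\ell)\to\Ho_\ell(F_{\ell+1})\bigr)$, so that
\begin{equation*}
     \Ho_\ell\Ff(\Lambda^a M)
     =\ker\p_\ell^{trip}\,/\,\im\p_{\ell+1}^{trip}
     \;\cong\;\im\bigl(\Ho_\ell(F_\ell)\to\Ho_\ell(F_{\ell+1})\bigr)
     =\Ho_\ell(F_{\ell+1})
     \;\cong\;\Ho_\ell(\Lambda^a M),
\end{equation*}
the penultimate equality because $\Ho_\ell(F_\ell)\to\Ho_\ell(F_{\ell+1})$ is onto. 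This is exactly the argument used by Milnor~\cite{milnor:1965a} in finite dimensions and by Abbondandolo and Majer~\cite{abbondandolo:2006a} for flows on Banach manifolds.

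For the naturality statement I would observe that a cellular map $f:\Lambda\to\Lambda'$ induces a chain map $\Co_*\Ff(\Lambda)\to\Co_*\Ff(\Lambda')$ by functoriality of the relative groups $\Ho_k(F_k,F_{k-1})$, and that every homomorphism appearing in the chase above is natural with respect to maps of filtered spaces. Applying this to the cellular inclusion $\iota:\Lambda^b M\hookrightarrow\Lambda^a M$ of Theorem~\ref{thm:cellular-filtration}~a) shows that the square
\begin{equation*}
\xymatrix{
     \Ho_*\Ff(\Lambda^b M)\ar[r]\ar[d]_\cong
   & \Ho_*\Ff(\Lambda^a M)\ar[d]^\cong\\
     \Ho_*(\Lambda^b M)\ar[r]
   & \Ho_*(\Lambda^a M)
}
\end{equation*}
commutes, where the horizontal arrows are induced by $\iota$ and the vertical arrows are the isomorphisms just constructed. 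This proves Theorem~\ref{thm:cellular=singular}.

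I do not expect a serious obstacle here: all the genuine work --- producing the open, semi-flow invariant subsets $F_k$, verifying conditions~(i)--(iii) of Definition~\ref{def:cellular-filtration}, and arranging that $\iota$ be cellular --- is already encapsulated in Theorem~\ref{thm:cellular-filtration}, so what remains is the well-documented, purely homological passage from a cellular filtration to singular homology together with its functoriality. The only mild care needed is to use the filtration degree (the Morse index) consistently on $\Lambda^b M$ and $\Lambda^a M$; this is automatic because $\Crit^b\subset\Crit^a$ and, by construction, $\Ff(\Lambda^b M)$ is built with the same times $T_{k+1}=T_{k+1}(a)$, so that indeed $F^b_k=F_k\cap\Lambda^b M$ and $\iota$ preserves filtration degree.
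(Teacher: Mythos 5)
Your argument is correct and takes essentially the same route as the paper, which simply cites \cite[V Prop.~1.3]{dold:1995a} applied to the cellular inclusion provided by Theorem~\ref{thm:cellular-filtration}; your additional unwinding of the long-exact-sequence chase for the finite filtration is just a self-contained spelling-out of what that citation delivers.
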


\begin{proof}
Apply~\cite[V Prop,~1.3]{dold:1995a}
to the cellular map provided by inclusion.
\end{proof}

\begin{remark}
Obviously for $k$ negative or larger than
the maximal Morse index $m(a)$ on
$\Lambda^a M$ there are no critical points
of Morse index $k$. Thus there are no
generators of $\Co_k\Ff(\Lambda^a M)$
by Theorem~\ref{thm:cellular-filtration} and
therefore
${\rm H}_k(\Lambda^a M)$ is trivial for such $k$
by~(\ref{eq:isomorphism-cellular-singular}).
\end{remark}

\subsection{Cellular and Morse chain complexes}
In Theorem~\ref{thm:cellular-filtration} we
established isomorphisms
$$
     \Theta_k=\Theta_k(\vartheta):
     \CM^a_k\left(\Ss_\Vv\right)
     \to \Co_k\Ff:=\Ho_k\left(F_k,F_{k-1}\right),
     \quad k\in\{0,\dots,m(a)\},
$$
between the Morse complex associated to
the Morse function $\Ss_\Vv$ on $\Lambda^a M$
and the cellular complex associated to
the Morse filtration
$\Ff =\left( F_k\right)_{k=-1}^m$ of
$\Lambda^a M$ defined by~(\ref{eq:F_k}).
On the other hand,
by~(\ref{eq:isomorphism-cellular-singular}) 
there is a natural isomorphism
between cellular homology and 
singular homology of $\Lambda^a M$.
So in order to establish the isomorphism
in Theorem~\ref{thm:main}
between Morse homology and singular homology
it suffices to prove that
the isomorphisms $\Theta_k$ intertwine
the Morse and the triple boundary
operators.\footnote{
  In this case both \emph{chain complexes} --
  the Morse complex of $\Ss_\Vv$
  and the cellular complex of the Morse
  filtration $\Ff$ -- are \emph{equal} (under the
   identifications provided by $\Theta_k$).
  }
Remarkably, in this very last step also
the \emph{forward} $\lambda$-Lemma
enters.

\begin{proof}[Proof of Theorem~\ref{thm:Morse=triple}]
For $k=0$ both boundary operators are trivial.
Fix $k\in\{1,\dots,m(a)\}$.
Given the key Theorem~\ref{thm:cellular-filtration},
the proof of~\cite[Theorem~2.11]{abbondandolo:2006a}
essentially carries over modulo the little new
twists caused by the present use of
push-forward orientations
and the forward $\lambda$-Lemma.
For convenience of the reader
we recall the proof and add further
details. 

\vspace{.1cm}\noindent
{\bf Idea of proof 
(cf. Figure~\ref{fig:fig-flow-down-sphere}).}
  In the unstable manifold $W^u(x)$
  one picks a certain disk $D^u_x$
  about $x$ with bounding sphere
  $S^u_x=\alpha^x(\SS^{k-1})$ 
  in the exit set $L_x\subset F_{k-1}$.
  For large times $T$ the forward flow
  $\varphi_T S^u_x=\beta^x(\SS^{k-1})$ largely
  enters $F_{k-2}$ -- except for center parts
  of embedded balls
  $B_1^T,\dots,B_N^T$
  which get stuck near critical points $y$ of Morse
  index $k-1$. The center of each ball
  corresponds to a connecting trajectory
  $u^\ell$ from $x$ to some $y$. In this case the
  center is $u^\ell(T)$ and $y=u^\ell(\infty)$.
  Homologically the splitting of the $(k-1)$-sphere
  provided by isolated flow lines emanating
  from $x$ is encoded by
  identity~(\ref{eq:Abbo-Maj-Exc}).
  A relevant part of each thickened flow line
  $B_\ell^T$ is isotopic to the disk
  $D^u_y=\vartheta^y(\D^{k-1})$
  thereby transporting a given orientation
  $\langle x\rangle$ of $W^u(x)$
  down to an orientation of $W^u(y)$
  denoted by $u_*\langle x\rangle$.

\vspace{.1cm}
Fix an oriented critical point $\langle x\rangle$
of Morse index $k$ and below level $a$
and consider the commutative diagram
in which all maps whose notation involves
$\iota$ or $i$ are inclusion induced.
\begin{equation*}
\xymatrix{
   &
     \Ho_*(N_x,L_x)
     \ar[r]^{\iota^x_*}
   &
     \Ho_*(N_k,L_k)
     \ar[d]^{\iota_*}_\cong
   \\
     {\underbracket{\Ho_k(\D^k,\SS^{k-1})}_{\sigma_{\langle x\rangle}[\D^k_{\langle\mathrm{can}\rangle}]}}
     \ar[r]^{\vartheta^{x}_*}_\cong
     \ar[d]_{\p}^\cong
   &
     \Ho_k(D^u_x,S^u_x)
     \ar[u]^{\iota_*}_\cong
     \ar[d]_{\p}^\cong
     \ar[r]^{i_*^x}
   &
     {\underbracket{\Ho_k(F_k,F_{k-1})}_{{\color{blue}{\Theta_k\langle x\rangle=}}[D^u_{\langle x\rangle}]}}
     \ar[d]^{\p}
   \\
          {\underbracket{\Ho_{k-1}(\SS^{k-1})}_{\sigma_{\langle x\rangle} [\SS^{k-1}_{\langle\mathrm{can}\rangle}]}}
     \ar[r]^{\;\;\;\alpha^x_* =(\vartheta^x|)_*}_{\cong}
     \ar[dd]_{J_*}
     \ar[dr]_{\beta^x_*=(\varphi_T\alpha^x)_*}
   &
     \Ho_{k-1}(S^u_x)
     \ar[r]^{(i^x|)_*}
     \ar[d]_{(\varphi_T)_*}^\cong
   &
     {\underbracket{\Ho_{k-1}(F_{k-1})}
     _{[S^u_{\langle x\rangle}]=[\varphi_T S^u_{\langle x\rangle}]}}
     \ar[dd]^{j_*}
   \\
   &
     \Ho_{k-1}(\varphi_T S^u_x)
     \ar[ru]_{\iota_*}
     \ar[d]_{j_*}
   \\
     {\underbracket{\Ho_{k-1}(\SS^{k-1},\SS^*)}_{{\stackrel{(\ref{eq:Abbo-Maj-Exc})}{=}}\sigma_{\langle x\rangle}\sum_\ell [B_\ell]}}
     \ar[r]^{\beta^x_*}_\cong
   &
     \Ho_{k-1}(\varphi_T S^u_x,\varphi_T S_x^*)
     \ar[r]^{\iota_*}
   &
     {\underbracket{
        \overbracket{\Ho_{k-1}(F_{k-1},F_{k-2})}
             ^{\sigma_{\langle x\rangle}\sum_\ell[\varphi_T\alpha^x(B_\ell)]}}
       _{=\sum_\ell[D^u_{u^\ell_*\langle
           x\rangle}]\color{blue}{=\sum_\ell\Theta(u^\ell_*\langle x\rangle)}}}
   \\
     \bigoplus_\ell
     {\underbracket{\Ho_{k-1}(B_\ell,\p B_\ell)}
             _{ \sigma_{\langle x\rangle} [B_\ell]}}
     \ar[u]^{\oplus_\ell \, \iota^\ell_*}_\cong
   &
     \bigoplus_\ell
     {\underbracket{\Ho_{k-1}(\D^{k-1},\SS^{k-2})}
     _{\sigma_{\langle x\rangle} a_{k-1}=\sigma_{\langle x\rangle} [\D^{k-1}_{\langle\mathrm{can}\rangle}]}}
     \ar[l]_{\diag(\theta^\ell_* )\;\;}^\cong
     \ar[ur]_{\;\;\oplus_\ell\, \bar\vartheta^{y(\ell)}_*}
   &
     , \underbracket{y=y(\ell):=u^\ell(+\infty)}
     _{u^\ell(0)=\alpha^x(\xi_\ell),\, \xi_\ell\in B_\ell\subset\SS^{k-1}} 
}
\end{equation*}
The elements of the homology groups shown
above/below the horizontal brackets
are mapped to one another by the maps
labelling the arrows. The diffeomorphism
$\vartheta^x:\D^k\to D^u_x:=\varphi_{-2\tau}
\overline{W^u_\eps(x)}\subset N_x$,
see~(\ref{eq:nat-iso}) and
Figures~\ref{fig:fig-D_u_x} 
and~\ref{fig:fig-flow-down-sphere},
is the one corresponding to $x$
in the sequence $\vartheta$ chosen to
define $\Theta_k$
and $\alpha^x=\vartheta^x|$ denotes
restriction to the boundary $\SS^{k-1}$.
The maps $j$ and $J$ are the usual projection
maps in their respective short exact sequence
of pairs.
The rectangle in row one commutes, simply
because all maps are inclusions.
The two squares in row two commute by
naturality of long exact sequences of pairs and
so do the two (nonrectangular) squares in row
three. The left triangle commutes by definition
of $\beta^x$ in~(\ref{eq:beta-x})
and the right one as the embedded
$(k-1)$-spheres
\begin{equation}\label{eq:S^u_x-T_k}
     S^u_x:=\alpha^x(\SS^{k-1})
     \subset L_x\subset L_k\subset F_{k-1}
     :={\varphi_{T_k}}^{-1}(N_{k-1}\cup F_{k-2})
\end{equation}
and $\varphi_T S^u_x$ of $W^u(x)$ are not only
diffeomorphic but even isotopic inside
the (semi-flow invariant) set $F_{k-1}$.
Commutativity of the final row
uses an isotopy provided by the
\emph{forward} $\lambda$-Lemma;
see~(\ref{eq:gamma}).
\\
For now ignore the last two lines of the
diagram. However, for later use let us mention
right away that we abbreviated relevant ball
complements by
$$
     \SS^*
     :=\SS^{k-1}\setminus{\cup_\ell}\,\INT B_\ell
     ,\qquad
     S_x^*
     :=S^u_x\setminus {\cup_\ell}\,
     \alpha^x(\INT B_\ell).
$$
These punched spheres are given by
the complement of open balls $\INT B_\ell$
in $\SS^{k-1}$ and the complement of the
corresponding open balls $\alpha^x(\INT B_\ell)$
in the corresponding sphere
$\alpha^x(\SS^{k-1})=S^u_x$, respectively.
\\
Recall the canonical orientations
of $\D^k$ and $\SS^{k-1}$ and the {\bf positive
generators} $a_k=[\D^k_{\langle\mathrm{can}\rangle}]$
and $b_{k-1}=[\SS^{k-1}_{\langle\mathrm{can}\rangle}]$
of $\Ho_k(\D^k,\SS^{k-1})$ and
$\Ho_{k-1}(\SS^{k-1})$, respectively, introduced
in Definition~\ref{def:canonical-orientation}. With
these conventions the connecting homomorphism
$\p:\Ho_k(\D^k,\SS^{k-1})\to \Ho_{k-1}(\SS^{k-1})$
maps $a_k$ to
$[\p\D^k_{\langle\mathrm{can}\rangle}] =b_{k-1}$.

\begin{figure}
  \centering
  \includegraphics{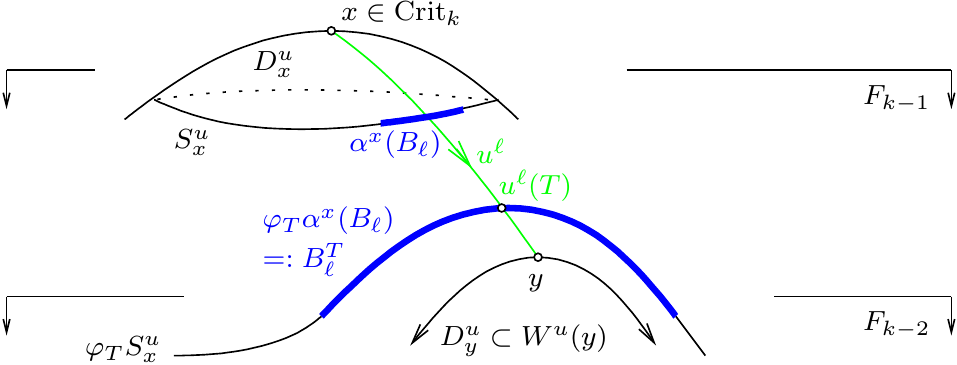}
  \caption{Isolated flow lines $u^\ell$
                and embedded balls $\alpha^x(B_\ell)$
           }
  \label{fig:fig-flow-down-sphere}
\end{figure}

{\bf The task at hand} is to express
the action of the triple boundary operator
on a generator 
$$
     \Theta_k\langle x \rangle 
     :=\bar\vartheta^x_*(\sigma_{\langle x\rangle} a_k)
     =[D^u_{\langle x\rangle}]
     \in\Ho_k(F_k,F_{k-1})=\Co_k\Ff
$$
of $\Co_k\Ff$ in terms of generators
$
     [D^u_{y}]\in\Co_{k-1}\Ff
$
where the $D^u_y\subset W^u(y)$ are
appropriately oriented disks -- one for
each flow trajectory connecting $x$ to
some $y\in\Crit^a_{k-1}$.
Recall that $\alpha^x=\vartheta^x|:
\SS^{k-1}\to S^u_x$
is a diffeomorphism. Abbreviate
$$
     \bar\alpha^x:=i^x\circ\alpha^x :
     \SS^{k-1}\to S^u_x\hookrightarrow F_{k-1} 
$$
and
\begin{equation}\label{eq:beta-x}
     \beta^x:=\varphi_T\circ\alpha^x ,
     \qquad
     \bar\beta^x:=\iota\circ\beta^x :
     \SS^{k-1}\to\varphi_T S^u_x
     \hookrightarrow F_{k-1} ,
\end{equation}
where $T\ge 1$ will be defined
in~(\ref{eq:time-AAAA}) below.
Use the definition~(\ref{eq:Theta-k})
of $\Theta_k$, the
identity~(\ref{eq:triple-boundary}) for
$\p_k^{trip}$, and commutativity of the huge
diagram above to obtain the following identities
\begin{equation}\label{eq:triple-boundary-2}
\begin{split}
     \left(\p_k^{trip}\Theta_k\right)\langle x\rangle
   &=
     \left(j_*\p i_*^x\vartheta^{x}_*\right)
     (\sigma_{\langle x\rangle} a_k)
   \\
   &=\sigma_{\langle x\rangle}
     \left( j_*\bar\alpha^x_*\right)(b_{k-1})
   \\
   &=\sigma_{\langle x\rangle}
     \left( j_*\bar\beta^x_*\right)(b_{k-1})
   \\
   &=\sigma_{\langle x\rangle}
     \left(\bar\beta^x_* J_*\right)(b_{k-1})
   \\
   &=\sum_{y\in\Crit^a_{k-1}}\sum_{u\in m_{xy}}
     \underbrace{\left( i_*^y\vartheta^y_*\right)
        \left(\sigma_{u_*\langle x\rangle} a_{k-1}\right)}
     _{\Theta_{k-1}(u_*\langle x\rangle)}
\end{split}
\end{equation}
among which only the final one remains to be
proved. To start with observe that by the
Morse-Smale condition together with index
difference one the pre-image
$$
     \{\xi_1,\dots,\xi_N\}
     :=(\alpha^x)^{-1}
     \left(\bigcup_{y\in\Crit_{k-1}^a} W^s(y)\right)
     \cong\bigcup_{y\in\Crit_{k-1}^a} m_{xy}
$$
is a finite subset of $\SS^{k-1}$
which parametrizes\footnote{
  Note that $\alpha^x(\SS^{k-1})\cap W^s(y)
  =S^u_x\cap W^s(y)
  \cong \varphi_{2\tau} S^u_x \cap W^s(y)
  =S^u_\eps(x)\cap W^s(x)
  \cong m_{xy}$
  where $S^u_\eps(x)$ is contained in a level set;
  both diffeomorphisms arise
  by restricting the heat flow to unstable manifolds;
  cf. Remark~\ref{rmk:diffeo-unstable-NEW}.
  }
the set of (unparametrized) heat flow lines
running from $x$ to some critical point of
Morse index $k-1$; cf.~(\ref{eq:m_xy})
and~\cite[Prop.~1]{weber:2013b}.
We denote by
{\boldmath $u^\ell$} the (unique) {\bf heat flow
trajectory} which passes at time $s=0$ through
the point $\alpha^x(\xi_\ell)\in W^u(x)\cap W^s(y)$
where $y=y(\ell):=u^\ell(\infty)$ is
the corresponding critical point of Morse index
$k-1$; see Figure~\ref{fig:fig-flow-down-sphere}.
Pick a time $s_\ell>0$ such that the point
$u^\ell(s_\ell)=\varphi_{s_\ell}\alpha^x(\xi_\ell)$
already lies in the 
ball $B_y^{\rho/2}$ about $y$ where the
radius $\rho>0$ only depends on the
action value $a$; see Lemma~\ref{le:34}
(Morse-Smale on neighborhoods).

By asymptotic forward
existence~\cite[Thm.~9.14]{weber:2010a-link}
and strictly decreasing Morse index
along connecting orbits
due to the Morse-Smale condition,
Lemma~\ref{le:34},
all elements of the punctured sphere
$
     \SS^{k-1}\setminus\{\xi_1,\dots,\xi_N\}
$
are mapped under $\alpha^x$ to points of
$W^u(x)$ which asymptotically converge in
forward time to some critical point $z$
below level $a$ and of
Morse index strictly smaller than $k-1$.
But such critical points are contained in
$F_{k-2}$; see Definition~\ref{def:Morse-filtration}.
Fix $N$ {\bf\boldmath pairwise
disjoint closed balls}
$\iota^\ell: B_\ell\hookrightarrow\SS^{k-1}$
centered in $\xi_\ell\in\SS^{k-1}$
and sufficiently small such that
\begin{equation}\label{eq:MS-B_ell}
     \varphi_{s_\ell}\alpha^x(B_\ell)
     \subset B^\rho_{y(\ell)}
     \qquad, \ell=1,\dots,N
     =\sum_{y\in\Crit^a_{k-1}}\abs{m_{xy}}.
\end{equation}
The canonical orientation of 
$\SS^{k-1}$ induces a {\bf canonical orientation}
of $B_\ell$.\footnote{
  For $k=1$ the sphere $\SS^0$
  consists precisely of the $N=2$ points
  $\xi_1=-1$ and $\xi_2=+1$, whose complement
  is empty. The two 0-balls
  are given by $B_\ell=\{\xi_\ell\}$ and
  $F_{k-1}=F_{-1}=\emptyset$.
  }
Throughout we denote by {\boldmath $B_\ell$}
the ball equipped with its canonical orientation.

Associated to the closed subset
$\overline{F_{k-2}}\subset\Lambda^a M$,
see~(\ref{eq:entrance-time}), there is the
continuous\footnote{
  Lower semi-continuity holds by closedness
  of the subset and upper semi-continuity
  follows from the fact that $F_{k-1}$ is
  positively invariant by the arguments
  which led to (\ref{eq:analog-args}).
  }
entrance time function
$\Tt_{\overline{F_{k-2}}}:\Lambda^a M\to[0,\infty]$.
The function
\begin{equation}\label{eq:max-T}
\begin{split}
     f:\SS^*=
     \SS^{k-1}\setminus\cup_{\ell}\,\INT B_\ell
   &\to[0,\infty) ,
   \\
     \xi
   &\mapsto
     \Tt_{\overline{F_{k-2}}}\left(\alpha^x(\xi)\right) 
\end{split}
\end{equation}
is continuous and also pointwise
  finite.\footnote{
  As observed earlier for each $\xi\in\SS^*$
  the point $\alpha^x(\xi)$ lies on a trajectory
  which connects $x$ with some
  $z\in\Crit^a_{\le k-2}\subset F_{k-2}$.
  Thus $\alpha^x(\xi)$ reaches the open
  set $F_{k-2}$ in finite time.
  }
Hence by compactness of its domain, that is the
punched sphere $\SS^*$, the function
$f$ admits a maximum. (Note that
$F_{k-2}=F_{-1}=\emptyset$ in the case $k=1$.)
Consider the instants of time
\begin{equation}\label{eq:time-AAAA}
     T:=\max\left\{T_k,s_x,1+\max f\right\}
     ,\qquad
     s_x:=\max\{s_1,\dots,s_N\} ,
\end{equation}
which come with the following consequences.
Firstly, by~(\ref{eq:S^u_x-T_k})
there is the crucial inclusion $\varphi_T S^u_x\subset
N_{k-1}\cup F_{k-2}$. This inclusion, together
with~(\ref{eq:rho}), (\ref{eq:34}),
(\ref{eq:MS-B_ell}), and the facts that
$N_{k-1}=\cup_z N_z$ and
$N_z\subset B_z^\rho$, implies that
\begin{equation}\label{eq:time-TTT}
     u^\ell(T) \in N_{y(\ell)}
      ,\qquad
     B^T_\ell:=\varphi_T\alpha^x(B_\ell)\subset
     N_{y(\ell)}\cup F_{k-2}.
\end{equation}
Secondly, the image $\varphi_T(S^u_x)$
of the map $\bar\beta^x$
largely lies downtown in $F_{k-2}$ except for
(small neighborhoods of) the points $u^\ell(T)$
each of which gets stuck at a critical point
$y=y(\ell):=u^\ell(+\infty)\in\Crit^a_{k-1}$;
see Figure~\ref{fig:fig-flow-down-sphere}.
Via the isotopy
$\{\varphi_{\lambda T}\circ\bar\alpha^x\}
_{\lambda\in[0,1]}$
the map $\bar\alpha^x$ is homotopic
to $\bar\beta^x$ in $F_{k-1}$.
Thus $[S^u_x]=\bar\alpha^x_*([\SS^{k-1}])
=\bar\beta^x_*([\SS^{k-1}])=[\varphi_T S^u_x]$
as elements of $\Ho_{k-1}(F_{k-1})$ by the
homotopy axiom of singular homology.
Most importantly, the map
$\bar\beta^x$ is well defined as a map
between the pairs of spaces indicated
in the following diagram. 

Fix for every $\ell$ an {\bf\boldmath
orientation preserving diffeomorphism}
$\theta^\ell:\D^{k-1}_{\langle\mathrm{can}\rangle}\to B_\ell$
and consider the commutative diagram of
maps of pairs
\begin{equation}\label{eq:cruc-id}
\begin{gathered}
\xymatrix{
     (\D^{k-1},\SS^{k-2})
     \ar[d]_{\theta^\ell}
    &
     \SS^{k-1}
     \ar[d]_J
     \ar[r]^{\bar\beta^x=\iota\varphi_T\alpha^x}
    &
     F_{k-1}
     \ar[d]^j
 \\
     \left(B_\ell,\p B_\ell\right)
     \ar[r]^{\iota^\ell\qquad\quad}
    &
     \left(
     \SS^{k-1},\SS^{k-1}\setminus\cup_\ell
     \,\INT B_\ell
     \right)
     \ar[r]^{\quad\bar\beta^x}
    &
     (F_{k-1},F_{k-2}).
}
\end{gathered}
\end{equation}
Here $J$ and $j$ denote inclusions
of pairs $X=(X,\emptyset)\mapsto(X,A)$.
The identity
\begin{equation}\label{eq:Abbo-Maj-Exc}
     J_*(b_{k-1}) 
     =\sum_{\ell=1}^N\bar\theta^\ell_*(a_{k-1})
     ,\qquad
     \bar\theta^\ell:=\iota^\ell\theta^\ell,
\end{equation}
provided
by~\cite[Exc.~2.12]{abbondandolo:2006a}
proves the first of the two identities
\begin{equation}\label{eq:gamma}
\begin{split}
     \sigma_{\langle x\rangle}\cdot
     \bigl(\bar\beta^x_* J_*\bigr) (b_{k-1})
   &=\sigma_{\langle x\rangle}\cdot\sum_{\ell=1}^N
      \left(\bar\beta^x\bar\theta^\ell\right)_*(a_{k-1})
   \\
   &=\sum_{\ell=1}^N
     \sigma_{u^\ell_*\langle x\rangle}\cdot
     \bar\vartheta^y_*(a_{k-1}).
\end{split}
\end{equation}
To conclude the proof
of~(\ref{eq:gamma}), thus
of~(\ref{eq:triple-boundary-2}), 
hence of Theorem~\ref{thm:Morse=triple},
it remains to prove that the maps
\begin{equation}\label{eq:id-homol}
     \sigma_{\langle x\rangle}\cdot
     (\bar\beta^x\bar\theta^\ell)_*
     \;\text{and}\;
     \sigma_{u^\ell_*\langle x\rangle}\cdot
     \bar\vartheta^y_*
     :\Ho_{k-1}(\D^{k-1},\SS^{k-2})
     \to\Ho_{k-1}(F_{k-1},F_{k-2})
\end{equation}
coincide on the
\emph{positive} generator $a_{k-1}$.
By definition~(\ref{eq:sign-kappa})
of the orientation reversing diffeomorphism
$\mu=\diag(-1,1,\dots,1)\in\Ll(\R^{k-1})$
and $\kappa_{\langle x\rangle}\in\{0,1\}$
this holds true if the by $\mu$
pre-composed maps of pairs\footnote{
  Changing the sign of the image of a homology
  class corresponds to pre-composing the map
  with an orientation preserving diffeomorphism
  such as $\mu$. Certainly $\mu=\mu^1$ and
  $\mu^0:=\1$.
  }
(illustrated by Figure~\ref{fig:fig-orientations})
$$
     \bar\beta^x\bar\theta^\ell
     \mu^{\kappa_{\langle x\rangle}}
     \;\;\text{and}\;\;
     \bar\vartheta^y
     \mu^{\kappa_\ell} 
     \;:\;(\D^{k-1},\SS^{k-2})\to(F_{k-1},F_{k-2})
     ,\qquad
     \kappa_\ell:=\kappa_{u^\ell_*\langle x\rangle},
$$
are isotopic, thus homotopic among
orientation preserving maps.\footnote{
  It suffices to show that the first map
  takes the canonically oriented disk
  $\D^{k-1}$
  to a disk isotopic to $D^u_{y(\ell)}$
  endowed with the transported orientation
  $u^\ell_*\langle x\rangle$
  as the latter is
  $\bar\vartheta^y\mu^{\kappa_\ell}
  (\D^{k-1}_{\langle\mathrm{can}\rangle})$.
  }
The proof takes two steps. First we
isotop (a relevant part of)
the map $\bar\beta^x\bar\theta^\ell$ to
$\bar\vartheta^y$, then in step two we verify
that all chosen orientations are preserved.
\begin{figure}
  \centering
  \includegraphics{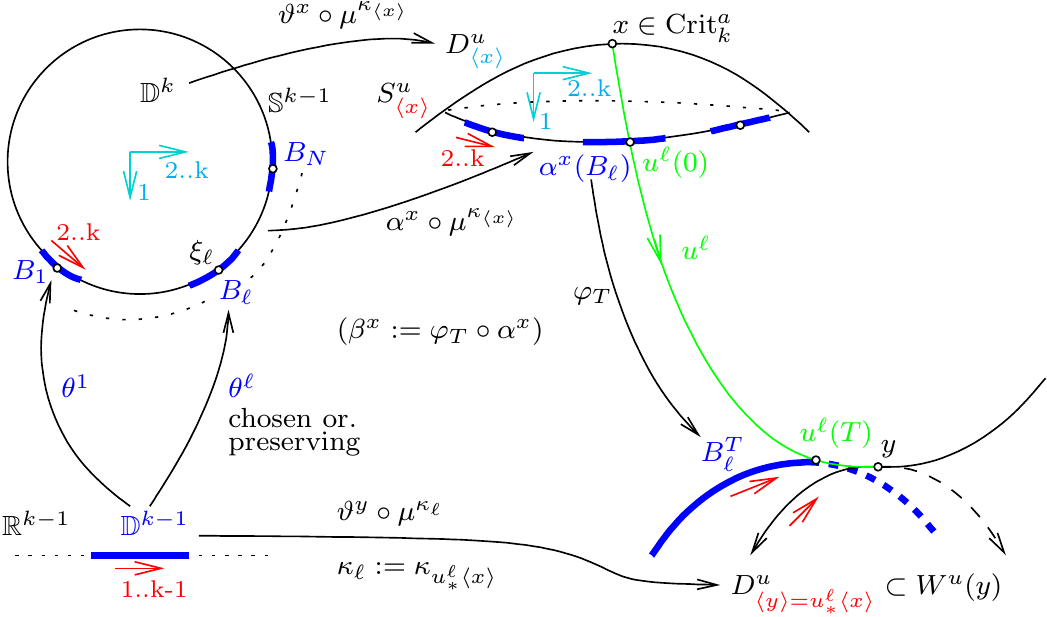}
  \caption{All maps are orientation preserving
                by choice of the exponents $\kappa$
           }
  \label{fig:fig-orientations}
\end{figure}

\vspace{.1cm}
\noindent
{\bf Step~1 (Isotopy).}
We construct an isotopy
of maps of pairs
$$
     (\D^{k-1},\SS^{k-2})
     \to(N_y\cup F_{k-2},F_{k-2})
     \subset(F_{k-1},F_{k-2})
$$
between (relevant parts of) the embedded
disks 
$$
     \bar\beta^x\bar\theta^\ell(\D^{k-1})
     =\varphi_T\alpha^x(B_\ell)
     =:B^T_\ell
     \quad\text{and}\quad
     \bar\vartheta^y(\D^{k-1})=D^u_y 
     \quad\text{where $y=u^\ell(\infty)$.}
$$
Remarkably at this very late stage of
the whole project eventually the
\emph{forward} analogue of the Backward
$\lambda$-Lemma~\cite[Thm.~1]{weber:2014a}
enters as a crucial
tool.\footnote{Since all dynamics takes
  place locally near $y$ in the closure of the
  unstable manifold of $x$ even
  the standard finite dimensional
  $\lambda$-Lemma,
  see e.g.~\cite[Ch.~2 \S7]{palis:1982a},
  serves our purposes.
  }
This is a local result valid in a neighborhood
of a hyperbolic fixed point.\footnote{
  Alternatively, apply the
  hyperbolic tools used in~\cite[Proof of
  Theorem~2.11]{abbondandolo:2006a}.
  }
We assume without loss of generality
that the forward $\lambda$-Lemma
applies on the whole domain
of our usual local coordinates
$\Phi^{-1}$ near any of the finitely many
critical points on
$\Lambda^a M$.\footnote{
  Otherwise, start with a smaller
  radius $\rho_0$ in
  Hypothesis~\ref{hyp:rho-eps-tau}.
  }
From now on we fix a local
parametrization
$\Phi:T_y\Lambda M=X=X^-\oplus X^+
\supset B^u\times B^+\to\Lambda M$ 
near $y=y(\ell)$ and use our
usual conventions concerning local
notations; see 
Hypothesis~\ref{hyp:local-setup-N}
and Figure~\ref{fig:fig-forward-lambda}.
In particular, the local flow is
denoted by $\phi$ and $S^u_\eps$
abbreviates the descending sphere $S^u_\eps(y)$.
However, we will not change notations
such as $N_y$, $L_{k-1}$, $F_{k-2}$ etc.
\begin{figure}
  \centering
  \includegraphics{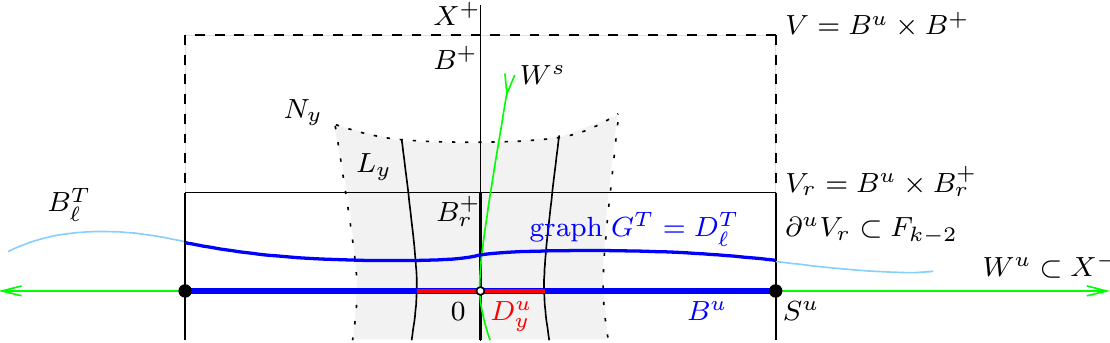}
  \caption{Isotopy $\{\graph\:
                \lambda G^T\}_{\lambda\in[0,1]}$ between
                $D^T_\ell$ and $B^u\supset D^u_y$
           }
  \label{fig:fig-forward-lambda}
\end{figure}
Observe that
$\phi_{-2\tau} S^u_\eps\subset L_y
\subset L_{k-1}\subset F_{k-2}$ where
the three inclusions hold by
Proposition~\ref{prop:N-L}, (\ref{eq:N_k}),
and~(\ref{eq:F_k}), respectively.
Thus
\begin{equation}\label{eq:hbhsjcbvajhs}
     W^u\setminus D^u_y
     =\phi_{(-2\tau,\infty)} S^u_\eps
     =\phi_{(0,\infty)} (\phi_{-2\tau} S^u_\eps)
     \subset F_{k-2}
\end{equation}
by semi-flow invariance of $F_{k-2}$.
Because $N_y\subset B^\rho_y\subset B^{2\rho}_y
\subset B^u\times B^+$ by 
Hypothesis~\ref{hyp:rho-eps-tau},
the $(k-1)$-sphere
$S^u:=\p B^u\subset W^u$
is disjoint to $N_y$, thus to $D^u_y$.
In fact, the distance between $S^u$ and $N_y$
is at least $\rho$. Consequently $S^u\subset
W^u\setminus D^u_u\subset F_{k-2}$
by~(\ref{eq:hbhsjcbvajhs}).
Therefore by openness of $F_{k-2}$ and
compactness of its subset $S^u$
there is a radius $r\in(0,1)$
such that the family $S^u\times B^+_r$
of radius $r$ balls $B^+_r$ about $0\in X^+$
is contained in $F_{k-2}$. To summarize
\begin{equation}\label{eq:pu-Vr}
     \p^u V_r:=S^u\times B^+_r\subset F_{k-2}
     ,\qquad
     \p^u V_r\cap N_y=\emptyset.
\end{equation}
The forward $\lambda$-Lemma asserts
that for every sufficiently large time $T$ the part
$$
     D^T_\ell:=B^T_\ell\cap
     \left(B^u\times B^+\right)
     =\graph\; G^T
     ,\qquad
     G^T\in C^1(B^u,B^+),
$$  
of the disk $B^T_\ell=\varphi_T\alpha^x(B_\ell)
=\beta^x\theta^\ell(\D^{k-1})$ inside
$B^u\times B^+$ is the graph of a $C^1$ map
$G^T:B^u\to B^+$ whose $C^1$ norm
converges to zero, as $T\to\infty$.
Thus choose $T$ in~(\ref{eq:time-AAAA})
larger, if necessary, to obtain that
$\norm{G^T}_{C^1}<r$.
Then, as elements of $\Ho_{k-1}(F_{k-1},F_{k-2})$,
the following classes are equal
\begin{equation*}
\begin{split}
     \left(\bar\beta^x\bar\theta^\ell\right)_*[\D^{k-1}]
     =[B^T_\ell]
     =[D^T_\ell]
     =[B^u]
     =[D^u_y]
     =\left(\bar\vartheta^y\right)_*[\D^{k-1}].
\end{split}
\end{equation*}
Here the first identity is just by definition
of the maps. The class of $B^T_\ell$ is
well defined in relative homology
by~(\ref{eq:cruc-id}) building on
definition~(\ref{eq:time-AAAA}) of $T$.
The part of the disk $B^T_\ell$ in
$V$ is $D^T_\ell=G^T(B^u)$ whose boundary
lies in $\p^u V_r$, hence in $F_{k-2}$
by~(\ref{eq:pu-Vr}). So $D^T_\ell$
is a cycle relative $F_{k-2}$. On the other
hand, its complement
$B^T_\ell\setminus D^T_\ell$ lies outside $V$,
hence outside $N_y$, and therefore in $F_{k-2}$
by~(\ref{eq:time-TTT}). Consequently the classes
of $B^T_\ell$ and $D^T_\ell$ coincide relative
$F_{k-2}$. Concerning identity three observe
that $D^T_\ell$ and $B^u$ are isotopic through
the embedded disks $\graph\:\lambda G^T$,
for $\lambda\in[0,1]$, whose boundaries
lie in $\p^u V_r\subset F_{k-2}$.
Identity four uses that $B^u\setminus D^u_y
\subset W^u\setminus D^u_y\subset F_{k-2}$
by~(\ref{eq:hbhsjcbvajhs}).
The final identity five holds by
choice of the diffeomorphism $\vartheta^x$
in~(\ref{eq:vartheta-2}).
\\
This proves~(\ref{eq:id-homol}) modulo
signs. So it only remains to study orientations.

\vspace{.1cm}
\noindent
{\bf Step~2 (Orientations).}
To prove~(\ref{eq:id-homol})
recall the definition of the transport
$u^\ell_*\langle x\rangle$ of the
orientation $\langle x\rangle$ of $W^u(x)$
along the heat flow trajectory $u^\ell$
between the critical points
$x$ and $y:=u^\ell(+\infty)$
towards an orientation of $W^u(y)$.
By Lemma~\ref{le:asc-disk} for small
$\eps>0$ the ascending disk $W^s_\eps(y)$
is a codimension $(k-1)$ submanifold
of $\Lambda^a M$.
Choosing $T$ larger, if necessary,
the point $p_\ell:=u^\ell(T)$
which anyway lies on the trajectory
$u^\ell$ from $x$ to $y$ moves closer
to $y$ and eventually lies in $W^s_\eps(y)$.
By the Morse-Smale condition
the orthogonal\footnote{
  with respect to the Hilbert structure
  of $\Lambda M$
  }
complement 
$T_{p_\ell}W^s_\eps(y)^\perp$ is a subspace
of $T_{p_\ell}W^u(x)$. The latter splits as
a direct sum of subspaces
\begin{equation}\label{eq:orient-transport}
     T_{p_\ell}W^u(x)
     =\R\left(\tfrac{d}{ds}\varphi_s p_\ell\right)\oplus
     T_{p_\ell}W^s_\eps(y)^\perp
     ,\qquad
     p_\ell:=u^\ell(T).
\end{equation}
Since two of the three vector spaces are
oriented, namely by $\langle x\rangle$ and
by the downward flow, the third space
inherits an orientation as well. Thereby
providing a co-orientation along all of the
(contractible) ascending disk $W^s_\eps(y)$,
in particular, at the point $y$ itself.
But $T_yW^s_\eps(y)^\perp=T_yW^u(y)$, so the
co-orientation determines an orientation
of  the unstable manifold $W^u(y)$ called
the {\boldmath\bf push-forward orientation
of $\langle x\rangle$ along the
flow line $u^\ell$} and denoted by
{\boldmath $u^\ell_*\langle x\rangle$}.

On the other hand, the boundary
orientation of $\SS^{k-1}$
is determined by an outward pointing
vector field and the canonical orientation
of $\D^k$. Given the orientation
$\langle x\rangle$ of $W^u(x)$,
the boundary orientation of
the $(k-1)$-sphere
$S^u_x=\p D^u_x\subset W^u(x)$
arises the same way using
the (outward pointing) downward gradient
vector field.
But the sign $\sigma_{\langle x\rangle}$ of
the diffeomorphism $\vartheta^x$ has
been chosen in~(\ref{eq:sign-vartheta})
precisely to make
$\vartheta^x\circ\mu^{\kappa_{\langle x\rangle}}$
and its restriction to the boundary
preserve these orientations. In particular,
there is the oriented direct sum
\begin{equation}\label{eq:orient-gamma}
     \left\langle T_{p_\ell} W^u(x)
     \right\rangle_{\langle x\rangle}
     =\left\langle\R\left(\tfrac{d}{ds}\varphi_s p_\ell
     \right)\right\rangle_{\text{flow}}
     \oplus
     \left\langle T_{p_\ell} B_\ell^T
     \right\rangle_{\varphi_T\alpha^x
     \mu^{\kappa_{\langle x\rangle}}}.
\end{equation}
Compare these orientations
with the ones in~(\ref{eq:orient-transport}),
which determine $u^\ell_*\langle x\rangle$,
to obtain that $(\varphi_T\bar\alpha^x
\mu^{\kappa_{\langle x\rangle}})_*
(\D^k_{\langle\mathrm{can}\rangle})
=u^\ell_*\langle x\rangle=
(\bar\vartheta^y\mu^{\kappa_\ell})_*
(\D^k_{\langle\mathrm{can}\rangle})$ where
$\kappa_\ell=\kappa_{u^\ell_*\langle x\rangle}$
and where the second identity
holds by the very definition of the sign
$\sigma_{u^\ell_*\langle x\rangle}$.
\end{proof}

\subsection{The natural isomorphism on homology}
\begin{theorem}\label{thm:morse}
Suppose $M$ is simply connected.
Assume $\Vv:\Ll M\to\R$ is a perturbation
that satisfies~{\rm (V0)--(V3)}
in~\cite{weber:2013b} and
$\Ss_\Vv$ is Morse-Smale below
a regular value $a\in\R$.
Then there is a natural isomorphism
$$
     \Psi^a_*:
     \HM^a_*(\Lambda M,\Ss_\Vv) 
     \to
     \mathrm{H}_*(\Lambda^aM)
$$
which commutes with the homomorphisms
$\HM^b_*(\Lambda M,\Ss_\Vv)\to
\HM^a_*(\Lambda M,\Ss_\Vv)$
and $\mathrm{H}_*(\Lambda^bM)
\to\mathrm{H}_*(\Lambda^aM)$ for $b<a$.
\end{theorem}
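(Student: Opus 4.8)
The plan is to assemble the isomorphism from the three structural results already established: the chain-group isomorphisms $\Theta_k$ of Theorem~\ref{thm:cellular-filtration}, their compatibility with the boundary operators from Theorem~\ref{thm:Morse=triple}, and the natural identification of cellular with singular homology from Theorem~\ref{thm:cellular=singular}. Since $M$ is simply connected, $\Lambda M$ is connected, so $\Lambda^a M$ carries a single Morse filtration $\Ff^a=(F_k)$ as in Theorem~\ref{thm:cellular-filtration} (in the non simply connected case one works separately on each path component, which is precisely what Theorem~\ref{thm:main} records). For a fixed admissible list $\vartheta$ of parametrizing diffeomorphisms the maps
\[
   \Theta_k:\CM_k^a(\Ss_\Vv)\to\Co_k\Ff^a=\Ho_k(F_k,F_{k-1}),
   \qquad \langle x\rangle\mapsto[D^u_{\langle x\rangle}],
\]
are isomorphisms, and by Theorem~\ref{thm:Morse=triple} they satisfy $\p_k^{trip}\circ\Theta_k=\Theta_{k-1}\circ\p^M_k$. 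Hence $\Theta_*$ is an isomorphism of chain complexes between the Morse complex $(\CM^a_*,\p^M_*)$ and the cellular complex $(\Co_*\Ff^a,\p_*^{trip})$, and therefore induces an isomorphism on homology $\HM^a_*(\Lambda M,\Ss_\Vv)\cong\Ho_*\Ff^a(\Lambda^a M)$. Composing with the natural isomorphism $\Ho_*\Ff^a(\Lambda^a M)\cong\Ho_*(\Lambda^a M)$ of Theorem~\ref{thm:cellular=singular} defines $\Psi^a_*$.

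Next I would check that $\Psi^a_*$ is independent of the auxiliary choices. On homology the map induced by $\Theta_*$ sends the class of $\langle x\rangle$ to $\iota_*[D^u_{\langle x\rangle}]\in\Ho_k(F_k,F_{k-1})$, the class of the descending disk $D^u_x=\varphi_{-2\tau}\overline{W^u_\eps(x)}$ equipped with the orientation $\langle x\rangle$; this depends only on $\eps$, $\tau$ and the orientation, not on $\vartheta$. The sign $\sigma_{\langle x\rangle}$ was built into the definition of $\Theta_k$ precisely so that the assignment respects the relation~(\ref{eq:relation}) and changes sign under $\langle x\rangle\mapsto-\langle x\rangle$; thus $\Theta_*$ descends to a well defined map on $\CM^a_*$ and the homology-level map is canonical. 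A standard argument shows that different admissible choices of $\eps,\tau$ and of the Conley pairs give canonically isomorphic cellular complexes, compatibly with $\Theta$; alternatively one simply fixes the data once and for all and checks that the statement is phrased accordingly.

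For naturality in $a$, let $b<a$ be a second regular value and recall from the proof of Theorem~\ref{thm:cellular-filtration} that the filtration $\Ff^b=(F_k^b)$ may be, and is, chosen with $F_k^b=F_k\cap\Lambda^b M$, so that the inclusion $\iota:\Lambda^b M\hookrightarrow\Lambda^a M$ is cellular. The inclusion-induced homomorphism $\HM^b_*(\Lambda M,\Ss_\Vv)\to\HM^a_*(\Lambda M,\Ss_\Vv)$ is induced by the chain inclusion $\CM^b_*\hookrightarrow\CM^a_*$, which is a chain map because the heat flow decreases the action, so every critical point $y$ appearing in $\p^M_k\langle x\rangle$ lies strictly below the level of $x$, hence below $b$ when $x\in\Crit^b$. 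Since $\Theta^a_k$ and $\Theta^b_k$ both send $\langle x\rangle$ to $[D^u_{\langle x\rangle}]$ and the disks $D^u_x$ for $x\in\Crit^b_k$ are literally the same subsets of $\Lambda^b M$ and of $\Lambda^a M$, the square relating $\Theta^b$, $\Theta^a$, the chain inclusion $\CM^b_*\hookrightarrow\CM^a_*$, and the cellular-map-induced $\iota_*:\Co_*\Ff^b\to\Co_*\Ff^a$ commutes. Combining this with the naturality statement of Theorem~\ref{thm:cellular=singular} — that the cellular-to-singular isomorphisms commute with $\iota_*$ — gives compatibility of $\Psi^b_*$ and $\Psi^a_*$ with the two inclusion homomorphisms, completing the proof.

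The main obstacle is not any new analytic or topological input — those are already encapsulated in Theorems~\ref{thm:cellular-filtration}, \ref{thm:Morse=triple} and~\ref{thm:cellular=singular} — but the careful bookkeeping in the last two steps: one must ensure that the identification $F_k^b=F_k\cap\Lambda^b M$ is compatible with the Conley-pair and tubular-neighborhood data entering the disks $D^u_x$, and that the homology-level map induced by $\Theta_k$ is genuinely insensitive to the list $\vartheta$. Both points reduce to the explicit formula $\Theta_k\langle x\rangle=[D^u_{\langle x\rangle}]$ together with the observation that the orientation sign $\sigma_{\langle x\rangle}$ is the only $\vartheta$-dependent ingredient and is exactly the one forced by the relation~(\ref{eq:relation}); once these are pinned down, the theorem follows formally.
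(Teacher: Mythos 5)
Your proposal is correct and follows essentially the same route as the paper: assemble $\Psi^a_*$ from the chain-level isomorphisms $\Theta_*$ of Theorem~\ref{thm:cellular-filtration}, use Theorem~\ref{thm:Morse=triple} to promote them to a chain-complex isomorphism, compose with the cellular-to-singular identification of Theorem~\ref{thm:cellular=singular}, and deduce naturality in $a$ from the cellularity of the inclusion $\Ff^b\hookrightarrow\Ff^a$ together with the compatibility of the $\Theta$'s with the chain inclusion $\CM^b_*\hookrightarrow\CM^a_*$. The paper's proof is more compact (it just exhibits the two-rectangle diagram and cites the three theorems), while you additionally spell out why $\Psi^a_*$ is insensitive to $\vartheta$ and why $\CM^b_*\hookrightarrow\CM^a_*$ is a chain map; these are accurate but implicit in the paper.
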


\begin{proof}[Proof of Theorem~\ref{thm:morse}]
Suppose $\Ss_\Vv$ is Morse-Smale below level $a$
and $b\le a$ are regular values.
Consider the Morse filtrations $\Ff(\Lambda^b M)
\hookrightarrow\Ff(\Lambda^a M)$
provided by~(\ref{eq:F_k}) and~(\ref{eq:F_k-b}).
Then the desired natural isomorphism
is the composition
of the two horizontal natural
isomorphisms in the following diagram.
\begin{equation*}
\begin{split}
\xymatrix{
     \Psi^a_*\,:\,
     \HM_*^a(\Lambda M,\Ss_\Vv)
     \ar[r]_{\hspace{.4cm}\cong}^{\hspace{.35cm}[\Theta_*^a]}
    &
     \Ho_*\Ff\left(\Lambda^a M\right)
     \ar[r]_{\cong \hspace{.3cm}}
       ^{(\ref{eq:isomorphism-cellular-singular})\hspace{.4cm}}
    &
     \mathrm{H}_*(\{\Ss_\Vv\le a\})
 \\
     \Psi^b_*\, : \,
     \HM_*^b(\Lambda M,\Ss_\Vv)
     \ar[u]^{\iota_*}
     \ar[r]_{\hspace{.4cm}\cong}^{\hspace{.35cm}[\Theta_*^b]}
    &
     \Ho_*\Ff\left(\Lambda^b M\right)
     \ar[r]_{\cong \hspace{.3cm}}
       ^{(\ref{eq:isomorphism-cellular-singular}) \hspace{.4cm}}
     \ar[u]^{\iota_*}
    &
     \mathrm{H}_*(\{\Ss_\Vv\le b\})
     \ar[u]^{\iota_*}
}
\end{split}
\end{equation*}
Concerning the left rectangle observe
that already both \emph{chain complexes},
underlying $\HM_*$ and $\Ho_*\Ff$,
are naturally identified for each regular
level $b\le a$ by the chain complex
isomorphism $\Theta_*^b$ --
see Theorem~\ref{thm:cellular-filtration}
and Theorem~\ref{thm:Morse=triple} --
which we actually established above for the present
class of abstract potentials $\Vv$.
Each of the two vertical maps $\iota_*$ is
induced by the inclusion of the subcomplex
associated to $b$. Thus the left rectangle
already commutes on the chain level.
The right rectangle is due to and commutes by
Theorem~\ref{thm:cellular=singular}.
\end{proof}

\begin{proof}[Proof of Theorem~\ref{thm:main}]
Consider the Morse function
$\Ss_V$ in Theorem~\ref{thm:main}
and pick a regular value $a$.
Then the transversality 
theorem~\cite[\S 1.2 Thm.~8]{weber:2013b}
provides, for each regular perturbation
$v\in\Oo^a_{reg}$, the second
of the two natural isomorphisms
\begin{equation}\label{eq:jhihi}
     \HM_*^a(\Lambda M,\Ss_{V+v})
     \stackrel{\Psi^a_*}{\cong}
     \mathrm{H}_*(\{\Ss_{V+v}\le a\})
     \cong
     \mathrm{H}_*(\{\Ss_{V}\le a\})
\end{equation}
where, of course, the notation $\Ss_{V+v}$
is slightly abusive. The first isomorphism
$\Psi^a_*$ is due to Theorem~\ref{thm:morse}
and the second one
to~\cite[\S 5.2 Prop.~8]{weber:2013b}.
Concerning $\Psi^a_*$ it is crucial
that $\Ss_{V+v}$ is Morse-Smale
below level $a$ -- which holds by
regularity of $v$ -- and concerning
the second isomorphism that $v$ lies
in the radius $r_a$ ball $\Oo^a$
defined by~\cite[(62)]{weber:2013b}.
This proves~(\ref{eq:gy6g5}), thus the
first part of Theorem~\ref{thm:main}.

Now assume that $a<b$ are regular values
of $\Ss_V$. The set of admissible
perturbations $\Oo^b$ given
by~\cite[(62)]{weber:2013b} is a closed ball
about zero in a separable Banach space.
Pick a regular perturbation
$v\in\Oo^b_{reg}\subset\Oo^b$
whose norm is bounded from above by
the constant $\delta^a/2$ given
by~\cite[(61)]{weber:2013b}.
In this case $v$ is in the set $\Oo^a$ 
by~\cite[\S 5.2 Rmk.~4]{weber:2013b}
and therefore enjoys the properties stated
in~\cite[\S 5.2 Prop.~8]{weber:2013b}
for both values $a$ and $b$;
see also the transversality
theorem~\cite[\S 1.2 Thm.~8]{weber:2013b}.
Of course, as the perturbed action
$\Ss_{V+v}$ is Morse-Smale below level $b$,
it is so below level~$a$.
Hence $v\in\Oo^a_{reg}\cap \Oo^b_{reg}$
and therefore we obtain, just as above, the
horizontal isomorphisms in the diagram
\begin{equation}\label{eq:gy656g}
\begin{split}
\xymatrix{
     \HM_*^b(\Lambda M,\Ss_{V+v})
     \ar[r]^{\Psi^b_*}
     &
     \mathrm{H}_*(\{\Ss_{V+v}\le b\})
     \ar[r]^{(\ref{eq:comm-tri-h})_b}
     &
     \mathrm{H}_*(\{\Ss_{\Vv}\le b\})
   \\
     \HM_*^a(\Lambda M,\Ss_{V+v})
     \ar[r]^{\Psi^a_*}
     \ar[u]^{\iota_*}
     &
     \mathrm{H}_*(\{\Ss_{V+v}\le a\})
     \ar[r]^{(\ref{eq:comm-tri-h})_a}
     \ar[u]^{\iota_*}
     &
     \mathrm{H}_*(\{\Ss_{\Vv}\le a\}).
     \ar[u]^{\iota_*}
}
\end{split}
\end{equation}
Here the left rectangle commutes
by Theorem~\ref{thm:morse}.
To see that the rectangle on the
right commutes use commutativity
of diagram~(\ref{eq:comm-tri-h}) for $a$
and for $b$ together with the inclusion
induced homomorphisms between both
diagrams and functoriality of singular
homology. This proves
Theorem~\ref{thm:main} when $a<\infty$.
The case $a=\infty$ follows from
functoriality and a direct limit argument.
\end{proof}

\begin{remark}
Consider part~II) of the proof
of~\cite[\S 5.2 Prop.~8]{weber:2013b}.
The resulting two homomorphisms --
one injection and one surjection --
fit into the (by functoriality
of singular homology) commutative
rectangle
\begin{equation}\label{eq:comm-tri-h}
\begin{split}
\xymatrix{
     {\rm H}_*(\left\{
     \Ss_{\Vv+v_\lambda}\le a\right\})
     \ar[r]^{\hspace{.1cm}\iota_*}_{\hspace{.1cm}\text{surj.}}
     \ar[dr]
   &
     {\rm H}_*(\left\{
     \Ss_{\Vv}\le a_+\right\})
   \\
     {\rm H}_*(\left\{
     \Ss_{\Vv+v_\lambda}\le a_-\right\})
     \ar[u]_\cong^{\iota_*}
     \ar[r]^{\hspace{.1cm}\iota_*}_{\hspace{.1cm}\text{inj.}}
   &
     {\rm H}_*(\left\{
     \Ss_{\Vv}\le a\right\})
     \ar[u]^\cong_{\iota_*}.
}
\end{split}
\end{equation}
of four inclusion induced
homomorphisms, all denoted by $\iota_*$.
Consequently both horizontal maps
are isomorphisms and this
defines the isomorphism indicated by the
diagonal arrow which divides the square
into two commutative triangles.
\end{remark}

\vspace{.2cm}
\noindent
{\bf Acknowledgements.} {\small
For extremely useful and pleasant discussions
the author is indebted to
Alberto Abbondandolo and Klaus Mohnke.
Many thanks to both of them.
\\
The present paper was announced
in previous publications under different
titles, for instance in~\cite{weber:2013b} as 
{\it Stable foliations associated to level sets and the homology of the loop space}
and in~\cite{weber:2014a,weber:2014b} as
{\it Stable foliations and the homology
of the loop space}.
}

\bibliography{library_math}{}
\bibliographystyle{abbrv}


\end{document}